\documentclass[10pt,twoside,a4paper]{article}
\usepackage{euscript,a4,times}
 \usepackage[cp866]{inputenc}
  \usepackage[english]{babel}
   \usepackage{makeidx}
    \usepackage{latexsym,amsfonts,amssymb,amsmath,longtable,amsthm}
     \usepackage{tikz}
      \usetikzlibrary{arrows}
       \tikzstyle{block}=[draw opacity=0.7,line width=1.4cm]
\unitlength1cm \textwidth16cm
 \oddsidemargin0.1cm
  \evensidemargin0.1cm
   \setlength{\textheight}{217mm}
    \setlength{\topmargin}{-7mm}
\input amssym.def
\numberwithin{equation}{section}
\newcommand{\be}{\begin{equation}}
 \newcommand{\ee}{\end{equation}}
  \newcommand{\bea}{\begin{eqnarray}}
   \newcommand{\eea}{\end{eqnarray}}
\newcommand{\Rn}{\mathbb{R}^n}
 \newcommand{\Rnn}{\mathbb{R}^{n\times n}}
  \newcommand{\R}{\mathbb{R}}
   
    \newcommand{\N}{\mathbb{N}}
     \newcommand{\V}{\mathcal{V}}
      \newcommand{\G}{\mathcal{G}}
 \newcommand{\dist}{\mathrm{dist}}
  \newcommand{\Div}{\mathrm{div}}
   \newcommand{\sprt}{\mathrm{supp}}
    \newcommand{\dx}{\,\mathrm{d}x}

        \newcommand{\diam}{\mathrm{diam}}
         
 \newcommand{\Om}{\Omega}
  \newcommand{\om}{\omega}
   
    \newcommand{\intom}{\int_\Om}

 \newcommand{\test}{C_{\rm c}^\infty}
  \newcommand{\wtest}{W_{\rm c}^{1,p}}
   \newcommand{\wdtest}{W_0^{1,p}}
 \newcommand{\D}{\mathcal{D}_0^{1,p}}
  \newcommand{\dD}{\mathcal{D}^{-1,p'}}
   
    \newcommand{\W}{W_0^{1,p}}
     
      \newcommand{\Wloc}{W_{\rm loc}^{1,p}}

   \newcommand{\e}{\varepsilon}
    \newcommand{\Ln}{\mathcal{L}^n}
     

\newtheorem{theorem}{Theorem}[section]
 \newtheorem{proposition}[theorem]{Proposition}
  \newtheorem{lemma}[theorem]{Lemma}
   \newtheorem{corollary}[theorem]{Corollary}
    \theoremstyle{definition}
     \newtheorem{definition}[theorem]{Definition}
      
       \newtheorem{remark}[theorem]{Remark}
        
\newtheorem*{corollary*}{Corollary}
\newtheorem*{theorem*}{Theorem}
\newtheorem*{definition*}{Definition}

\title{\Large{{\bf On positive solutions of the $(p,A)$-Laplacian\\ with a potential in Morrey space}}}
\author{\small{YEHUDA PINCHOVER}
        \\
        \small{\&}
        \\
        \small{GEORGIOS PSARADAKIS}
        \\
        \date{\today}}
\begin{document}
\maketitle
\begin{abstract} We study qualitative positivity properties of quasilinear equations of the form
\[
 Q'_{A,p,V}[v] := -\Div(|\nabla v|_A^{p-2}A(x)\nabla v) + V(x)|v|^{p-2}v =0 \qquad x\in\Omega,
\]
where $\Omega$ is  a domain in  $\R^n$,  $1<p<\infty$, $A=(a_{ij})\in L^\infty_{\rm loc}(\Om;\Rnn)$ is a symmetric and locally uniformly positive definite matrix, $V$ is a real potential in a certain local Morrey space (depending on $p$), and
\[
 |\xi|_{A}^{2}:=A(x)\xi\cdot\xi=\sum_{i,j=1}^n a_{ij}(x)\xi_i\xi_j \qquad x\in\Om,~\xi=(\xi_1,\ldots,\xi_n)\in\R^n.
\]
Our assumptions on the coefficients of the operator for $p\geq 2$ are the minimal (in the Morrey scale) that ensure the validity of the local Harnack inequality and hence the H\"older continuity of the solutions. For some of the results of the paper we need slightly stronger assumptions when $p<2$.

We prove an Allegretto-Piepenbrink-type theorem for the operator $Q'_{A,p,V}$, and extend criticality theory to our setting. Moreover, we establish a Liouville-type theorem and obtain some perturbation results. Also, in the case $1<p\leq n,$ we examine the behavior of a positive solution near a nonremovable isolated singularity and characterize the existence of the positive minimal Green function for the operator $Q'_{A,p,V}[u]$ in $\Om$.

  \medskip

  \noindent\footnotesize{\textbf{2010 Mathematics Subject Classification}: Primary 35J92; Secondary 35B09, 35B50, 35B53, 35J08.}

  \medskip

  \noindent\footnotesize{\textbf{Keywords and phrases}: Quasilinear elliptic equation, Liouville theorem, maximum principle, minimal growth, Morrey spaces, $p$-Laplacian, positive solutions, removable singularity}
\end{abstract}
\tableofcontents
\section{Introduction}\label{sec:introduction}
Let $\Om$ be a domain in $\Rn$, $n\geq 2$. The Allegretto-Piepenbrink (AP) theorem asserts that under some regularity assumptions on a real symmetric matrix $A$ and a real potential $V$, the nonnegativity of the Dirichlet energy
 \be\nonumber
  \int_\Om\Big(|\nabla u|^2_A+V(x)|u|^2\Big)\dx\geq0
   \qquad \mbox{for all }u\in\test(\Om),
 \ee
is \textit{equivalent} to the existence of a positive weak solution of the Schr\"{o}dinger equation
 \be\label{intro:Schrodinger}
  -\Div\big(A(x)\nabla v\big)+V(x)v=0 \qquad \mbox{in }\Om,
 \ee
where
 \be
  |\xi|_{A}^{2}:=A(x)\xi\cdot\xi=\sum_{i,j=1}^n a_{ij}(x)\xi_i\xi_j\geq 0 \qquad \forall x\in \Omega, \mbox{ and }\forall \xi=(\xi_1,\ldots,\xi_n)\in\R^n.
 \ee

After the original results in \cite{All1}, \cite{P1}, a sequence of papers gradually relaxed the assumptions on $A$ and $V$  (see \cite{P2}, \cite{MP}, \cite{All2} and \cite{All3}). It was established by Agmon in \cite{Agm} that if $A\in L^\infty_{\rm loc}(\Om;\Rnn)$ is symmetric and locally uniformly positive definite in $\Om$, and $V\in L_{\rm loc}^q(\Om)$ with $q>n/2$, then the AP theorem holds true. If $A$ is the identity matrix, further relaxation on the regularity of $V$ is established in \cite[\S C8]{Sm}, albeit some global condition on $V^-$ is required there. We refer to \cite{LStV} and references therein for an up to date account.

A generalization of the AP theorem to certain quasilinear equations with $A$ being the identity matrix and $V\in L^\infty_{\rm loc}(\Om)$ has been carried out in \cite{PT1}. This was recently extended in \cite{PR} to include Agmon's assumptions on the matrix $A$. More precisely, for $1<p<\infty$, $A$  as above, and $V\in L_{\rm loc}^\infty(\Om)$, the nonnegativity of the functional
 \be\label{intro:natural form bound}
  Q_{A,p,V}[u]:=\int_\Om\Big(|\nabla u|^{p}_A+V(x)|u|^p\Big)\dx\geq0
   \qquad \mbox{for all }u\in\test(\Om),
 \ee
is proved to be equivalent to the existence of a positive weak solution to the corresponding Euler-Lagrange quasilinear equation
 \be\label{intro:pLaplace}
  Q'_{A,p,V}[u]:=-\Div\big(|\nabla v|^{p-2}_{A} A(x)\nabla v\big)+V(x)|v|^{p-2}v=0
   \qquad \mbox{in }\Om.
 \ee
Clearly, the quasilinear equation (\ref{intro:pLaplace}) satisfies the homogeneity property of equation (\ref{intro:Schrodinger}) but not the additivity (such an equation is sometimes called {\em half-linear}). Consequently, one expects that positive solutions of (\ref{intro:pLaplace}) would share some properties of positive solutions of (\ref{intro:Schrodinger}).

An essential common implication of the various assumptions on $A$ and $V$ in the aforementioned results, is the validity of the local Harnack inequality for positive solutions of (\ref{intro:Schrodinger}) and (\ref{intro:pLaplace}). For instance, Agmon's assumption on $V$ is optimal in the Lebesgue class of potentials for the Harnack inequality to be true. We stress that when the Harnack inequality fails, then the AP theorem might not be valid. Indeed, denote $p':=p/(p-1)$ the conjugate index of $p$, and suppose that $A$ is the identity matrix. Let $V\in\dD_{\rm loc}(\Om)$, where $\dD(\Om)$ is the dual of $\D(\Om)$ which is in turn defined as the closure of $\test(\Om)$ under the semi-norm $\|\nabla u\|_{L^p(\Om;\Rn)}$. If in addition to (\ref{intro:natural form bound}), one has that
 \be\nonumber
  \int_\Om\Big(|\nabla u|^p - k V|u|^p\Big)\dx\geq0
   \qquad \mbox{for all }u\in\test(\Om),
 \ee
 for some positive constant $k$, then the equation
 \be\label{intro:pLaplace:mv}
  -\Div\big(|\nabla v|^{p-2}\nabla v\big)+\alpha V|v|^{p-2}v=0
   \qquad \mbox{in }\Om,
 \ee
admits a positive solution (in a certain weak sense) for any $\alpha\in(0,p^\sharp)$, where $p^\sharp<1$ is given explicitly and depends only on $p$ (see \cite[Theorem 1.2 (i)]{JMV2}, or \cite[Theorem 1.1 (i)]{JMV1} for $p=2$). Moreover, this range for $\alpha$ is optimal as examples involving the Hardy potential reveals (see \cite[Remark 1.3]{JMV2}, or \cite[Example 7.3]{JMV1} for $p=2$). We note that under the above assumptions, the local Harnack inequality for positive solutions of \eqref{intro:pLaplace:mv} is in general not valid.

\medskip

The first aim of the present paper is to extend the AP theorem for the operator $Q'_{A,p,V}$ by relaxing significantly the condition $V\in L^\infty_{\rm loc}(\Om)$. In particular, under Agmon's (minimal) assumptions on the matrix $A$, we require $V$ to lie in a certain local Morrey space, the largest such that the Harnack inequality for positive solutions (and hence the local H\"{o}lder continuity of solutions) holds true. This means that we assume (see for instance \cite[\S5]{Tr}, \cite{RZ}, \cite{MZ} and also \cite{DF} for (\ref{intro:Schrodinger}))
 \be\label{intro:assumptions:V}
  \sup_{\substack{y\in\om\\0<r<\diam(\om)}}\varphi_q(r)\int_{\om\cap B_r(y)}|V|\dx<\infty
   \qquad \mbox{for all }\om\Subset\Om,
 \ee
where $\varphi_q(r)$ has the following behaviour near $0$
 \be\label{intro:def:varphi}
  \varphi_q(r)\;\underset{r\to 0}{\sim}\;\Bigg\{
  \begin{array}{llll}
    r^{-n(q-1)/q}\quad &\mbox{with }q>n/p & \mbox{ if } p<n,
     \\[2pt]
    \log^{q(n-1)/n}(1/r)\quad &\mbox{with }q>n & \mbox{ if } p=n,
     \\[2pt]
    1 & &\mbox{ if } p>n.
  \end{array}
 \ee
We prove in addition, that the assertions of the AP theorem are equivalent to the existence of a weak solution $T\in L^{p'}_{\rm loc}(\Om;\Rn)$ of the first order (nonlinear) divergence-type equation
 \be\nonumber
  -\Div(AT)+(p-1)|T|_A^{p'}=V.
 \ee
We refer to \cite[Theorem 1.3]{JMV1} for a related result with $A$ equals the identity matrix and $p=2$.

Recall that in general functions in Morrey spaces cannot be approximated by functions in $C^\infty(\Om)$, nor even by continuous functions (see \cite{Z}). Therefore, we cannot use an approximation argument to extend the AP theorem to our setting. Consequently, we need to start our study from the beginning of the topic and present in detail proofs involving new ideas.

\medskip

Another aim of the paper is to extend to the above class of operators several classical results and tools that hold true in {\em general} bounded domains (cf. \cite{AllH,GMdL,PR}, where stronger regularity assumptions on the coefficients and the boundary are assumed). In particular, we prove the existence of the principal eigenvalue, establish its main properties, and study the relationships between the positivity of principal eigenvalue, the weak and strong maximum principles, and the (unique) solvability of the Dirichlet problem.

\medskip

We then proceed to our main goal: establishing \textit{criticality theory} for (\ref{intro:pLaplace}) with $A$ and $V$ satisfying the above assumptions. To present the main results of the paper, let us recall that in case inequality (\ref{intro:natural form bound}) holds true but cannot be improved, in the sense that one cannot add on its right hand side a term of the form $\int_\Om W|u|^p\dx$ with a nonnegative function $W\not\equiv0$, then the nonnegative functional $Q_{A,p,V}$ is called \textit{critical} in $\Om$. Furthermore, a sequence $\{u_k\}_{k\in\N}\subset\wdtest(\Om)$ is called a {\em null sequence} with respect to the nonnegative functional $Q_{A,p,V}$ in $\Om$ if\vspace{.5em}

 a) $u_k\geq0$ for all $k\in\mathbb{N}$,
\vspace{.5em}

 b) there exists a fixed open set $K\Subset\Om$ such that $\|u_k\|_{L^p(K)}=1$ for all $k\in\mathbb{N}$.
\vspace{.5em}

 c) $\displaystyle{\lim_{k\to\infty}Q_{A,p,V}[u_k]=0}$,
\vspace{.5em}

\noindent A positive function  $\phi\in W_{\rm loc}^{1,p}(\Om)$ is called a {\em ground state} of $Q_{A,p,V}$ in $\Om$ if $\phi$ is an $L_{\rm loc}^p(\Om)$ limit of a null sequence. Finally, a positive solution $u$ of the equation $Q'_{A,p,V}[u]=0$ in $\Omega$ is a \emph{global minimal solution} if for any smooth compact subset $K$ of $\Om$, and any positive supersolution $v\in C(\Om\setminus{\rm int}K)$ of the equation $Q'_{A,p,V}[u]=0$ in $\Om\setminus K$, we have the implication
 \be\nonumber
  u\leq v\mbox{ on }\partial K\;\;\;\Rightarrow\;\;\;u\leq v\mbox{ in }\Om\setminus K.
 \ee
The central result of this paper is summarized in the following theorem.

\begin{theorem*}[Main Theorem]\label{main_thm} {\em Let $\Om$ be a domain in $\Rn$, where $n\geq2$, and suppose that the functional $Q_{A,p,V}$ is nonnegative on $\test(\Omega)$, where $A$ is a symmetric and locally uniformly positive definite matrix in $\Om$, and
\[\left\{
    \begin{array}{ll}
A\in L^\infty_{\rm loc}(\Om;\Rnn), \mbox { and } V \mbox{ satisfies (\ref{intro:assumptions:V}) with } \varphi_q \mbox{ as in } (\ref{intro:def:varphi})
 & \mbox { if } p\geq2,  \\[4mm]
A\in C^{0,\gamma}_{\rm loc}(\Om;\Rnn), \gamma\in(0,1),  \mbox { and } V \mbox{ satisfies  (\ref{intro:assumptions:V}) with } \varphi_q \;\underset{r\to 0}{\sim}\; r^q, q>n
  & \mbox{ if } p<2.
    \end{array}
  \right.
\]

\noindent Then the following assertions are equivalent:
\begin{enumerate}
  \item $Q_{A,p,V}$ is critical in $\Om$.
  \item $Q_{A,p,V}$ admits a null sequence in $\Om$.
  \item There exists a ground state $\phi$ which is a positive weak solution of \eqref{intro:pLaplace}.
  \item There exists a unique (up to a multiplicative constant) positive supersolution $v$ of \eqref{intro:pLaplace} in $\Om$.
  \item There exists a global minimal solution $u$ of \eqref{intro:pLaplace} in $\Om$.
  \end{enumerate}
In particular, $\phi=c_1v=c_2u$ for some positive constants $c_1,c_2$.

\medskip

Moreover, if $1<p\leq n$, then the above assertions are equivalent to
\begin{enumerate}
  \item[6.] Equation \eqref{intro:pLaplace} does not admit a positive minimal Green function.
\end{enumerate}}
\end{theorem*}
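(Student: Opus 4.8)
The plan is to establish the chain of equivalences (1)--(5) as a cycle, and then close the loop with (6) in the subcritical-dimension case $1<p\le n$. Before any of this, I would need the basic machinery: a Harnack convergence theorem (a locally uniformly bounded sequence of nonnegative solutions of $Q'_{A,p,V}[u]=0$ has a subsequence converging in $W^{1,p}_{\rm loc}\cap C_{\rm loc}$ to a solution), the generalized Picone-type / Diaz-Saa-type inequality adapted to the $(p,A)$ setting, and the ground-state transform $Q_{A,p,V}[\phi w] = \int_\Om \phi^p\,|\nabla w|_A^p\,\big(\text{something}\big)\dx$ up to the relevant weighted expression, all of which follow from the AP theorem and the Harnack inequality already available under the stated Morrey hypotheses.

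\smallskip

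\textbf{The cycle (1)$\Leftrightarrow$(2)$\Leftrightarrow$(3)$\Leftrightarrow$(4)$\Leftrightarrow$(5).} I would argue as follows. (3)$\Rightarrow$(2) is immediate from the definition of a ground state. For (2)$\Rightarrow$(3): given a null sequence $\{u_k\}$, one shows via the ground-state transform and the local Harnack inequality that $\{u_k\}$ is bounded in $W^{1,p}_{\rm loc}(\Om)$; extracting a subsequence and using the Harnack convergence theorem produces a limit $\phi\in W^{1,p}_{\rm loc}$ which is a nonnegative solution, hence positive by Harnack, and $\phi\not\equiv 0$ because $\|\phi\|_{L^p(K)}=1$; that $\phi$ is a ground state is then by construction. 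For (2)$\Rightarrow$(1): if $Q_{A,p,V}$ could be improved by adding $\int W|u|^p$ with $0\le W\not\equiv0$, testing against the null sequence gives $\int W|\phi|^p\le 0$, contradicting $\phi>0$. For (1)$\Rightarrow$(4) and (3)$\Rightarrow$(4): uniqueness of the positive supersolution is the heart of criticality theory — given a positive supersolution $v$ and the ground state $\phi$, one uses the Diaz-Saa / Picone inequality applied to $\phi$ and $v$ to deduce that $\nabla(\phi/v)=0$ locally, hence $\phi=cv$; conversely, if the positive supersolution is unique, one excludes the existence of a nonzero $W$ by noting that $Q_{A,p,V-W}\ge 0$ would then furnish a second, linearly independent positive supersolution. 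For (4)$\Leftrightarrow$(5): a global minimal solution is in particular a positive solution, hence a positive supersolution, so (4)$\Rightarrow$ its uniqueness forces it to coincide with any global minimal solution; conversely, in the critical case one constructs the global minimal solution as the ground state $\phi$ itself, verifying the minimal-growth property at infinity by a comparison argument on $\Om\setminus K$ using that $\phi$ is the unique positive supersolution (so $v\ge\phi$ on $\partial K$ combined with $v$ supersolution forces $v\ge c\phi$ in $\Om\setminus K$, and one upgrades to the sharp constant). I would organize all the above so that each arrow invokes exactly one of: the Harnack convergence theorem, the Picone/Diaz-Saa inequality, or the uniqueness-of-supersolution statement, which themselves are proved once.

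\smallskip

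\textbf{The equivalence with (6) when $1<p\le n$.} Here I would show (1)$\Leftrightarrow$(6) directly using the local behavior near an isolated singularity established earlier in the paper. If $Q_{A,p,V}$ is critical, then the ground state $\phi$ is the unique positive supersolution; a positive minimal Green function $G$ with pole at $x_0$ is a positive supersolution on $\Om$ (it solves the equation away from $x_0$ and the singularity at $x_0$, being a nonremovable singularity for $1<p\le n$, makes it a genuine supersolution across $x_0$ in the distributional sense), so by uniqueness $G=c\phi$; but $\phi$ is bounded near points where it solves the equation (by local boundedness of solutions), contradicting the singular behavior of $G$ at its pole — hence no Green function exists, i.e.\ (1)$\Rightarrow$(6). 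Conversely, if $Q_{A,p,V}$ is subcritical, I would construct a positive minimal Green function by the standard exhaustion procedure: take an increasing sequence of smooth domains $\Om_k\uparrow\Om$ with $x_0\in\Om_1$, solve $Q'_{A,p,V}[G_k]=\delta$-like data (more precisely, $G_k$ is the solution with a suitable approximate point source, or the limit of solutions with $L^\infty$ right-hand sides concentrating at $x_0$), and show $G_k\uparrow G$ with $G$ finite and positive — finiteness being exactly what subcriticality buys, since the existence of a ``spectral gap''/strict positivity of the functional controls the sequence from blowing up. This gives (6)$\Rightarrow$ subcritical, i.e.\ $\neg(1)\Rightarrow\neg(6)$.

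\smallskip

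\textbf{Main obstacle.} The hard part will be the uniqueness of the positive (super)solution in the critical case, i.e.\ establishing the $(p,A)$-version of the Picone/Diaz-Saa inequality with merely Morrey-class $V$ and $L^\infty_{\rm loc}$ (or $C^{0,\gamma}_{\rm loc}$ for $p<2$) matrix $A$, and justifying all the integrations by parts and test-function manipulations at this low regularity — in particular handling the terms involving $V|u|^{p-2}u$ where $V$ is only in the local Morrey space, which requires the Morrey embedding and the Hölder continuity of solutions to make the pairings well-defined. The case $p<2$ is additionally delicate because the ellipticity of the linearized operator degenerates, which is why the stronger hypotheses on $A$ and $V$ are imposed there; care is needed to see exactly where those are used (presumably in the Harnack convergence theorem and in the strong comparison principle underlying the uniqueness argument). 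The equivalence with (6) is comparatively routine once the local singularity analysis from the earlier sections is in hand, modulo checking that the Green function construction's a priori bounds are exactly governed by subcriticality.
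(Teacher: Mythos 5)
Your overall architecture mirrors the paper's, but the step where you pass to item (6) contains a concrete error. You claim that a positive minimal Green function $G$ with pole at $x_0$ "is a positive supersolution on $\Om$ ... in the distributional sense," and then invoke the uniqueness of the positive supersolution to conclude $G=c\phi$. This cannot work as stated: the uniqueness statement (Theorem~\ref{crit} / Theorem~\ref{theorem:main}) concerns positive supersolutions in $W^{1,p}_{\rm loc}(\Om)$, and for $1<p\le n$ the Green function is \emph{not} in $W^{1,p}_{\rm loc}(\Om)$ (near the pole $|\nabla G|\sim|x-x_0|^{(1-n)/(p-1)}$, which fails to be in $L^p$ precisely when $p\le n$). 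Moreover it is a separate and nontrivial fact whether a positive solution on $\Om\setminus\{x_0\}$ with a nonremovable singularity extends to a supersolution across $x_0$ in any weak sense; your sketch assumes this without argument. The paper avoids both difficulties: in Theorem~\ref{thrm:nonremove} one never treats $G$ as a global object. Instead, criticality yields a global minimal solution $v$; Theorem~\ref{theorem:removesing} gives $\lim_{x\to x_0}u(x)=\infty$ for a nonremovable singularity; and then the minimal-growth property of $v$ is compared against $\e u$ on exterior domains $\Om\setminus \overline{B}_\delta(x_0)$, shrinking $\delta$ and $\e$, to force $v\equiv0$, a contradiction. No claim about integrability of $G$ at the pole is needed.

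Your converse direction (6)$\Rightarrow$(1) is also too thin. You assert that "finiteness being exactly what subcriticality buys," i.e., that subcriticality controls the blow-up of the exhaustion sequence at $x_0$; but the behavior at the pole is an isolated-singularity phenomenon, not something directly governed by global subcriticality. The paper's route is a dichotomy: for any $x_0$, Theorem 5.5 produces $u\in\mathcal{M}_{\Om;\{x_0\}}$ (by exhaustion, normalization, and the Harnack convergence principle); then either the singularity is removable — in which case $u$ extends to a global minimal solution, forcing criticality by Theorem~\ref{global thm} — or it is nonremovable, in which case $u$ \emph{is} a Green function. So subcriticality gives the Green function by elimination, not by a direct a priori estimate on the approximating sequence. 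Two smaller points worth noting: your uniqueness step via a direct "Picone $\Rightarrow\nabla(\phi/v)=0$" argument has to be handled carefully, since the Diaz--Saa/Lindqvist inequality (Lemma~\ref{lemma:lind}) is stated for test pairs with matching boundary data on a bounded domain, while $\phi$ has full support; the paper instead deduces uniqueness from the convergence of null-sequence ratios $u_k/v$ to a constant in $W^{1,p}_{\rm loc}$ (Proposition~\ref{proposition:W1ploc:boundedness} plus Theorem~\ref{crit}), which is why the extra regularity hypotheses for $p<2$ enter exactly there (one needs $|\nabla v|$ locally bounded to close the estimate). Your (2)$\Rightarrow$(1) argument via Fatou is fine and genuinely different from the paper's (which instead contradicts uniqueness of the supersolution by producing a second one from the AP theorem applied to $Q_{A,p,V-W}$).
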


\begin{remark}
The additional regularity assumptions on $A$ and $V$ for the case $1<p<2$ in the Main Theorem seems to be technical, and might be nonessential. However, these assumptions guarantee the Lipschitz continuity of solutions of (\ref{intro:pLaplace}) (in fact they guarantee that solutions are $C^{1,\alpha}$, see \cite[Theorem 5.3]{Lb}), a property which (as in \cite{PT1,PR}) is essential for the proof of the Main Theorem in this range of $p$. On the other hand, throughout the paper we do not use the boundary point lemma, which was an essential tool in \cite{GMdL,PT1,PR}.
\end{remark}

\medskip

The structure of the article is presented next. In \S\ref{ssubsec:Morrey} we define the local Morrey space of potentials $V$  we are going to work with, and also present an uncertainty-type inequality for such potentials due to C.~B.~Morrey for $p=2$, and  D.~R.~Adams (see \cite[\S1.3]{MZ}) for $1<p<\infty$, that holds true in this space. This is the key property that is used in \cite{MZ,Tr} in order to extend Serrin's elliptic regularity theory \cite{Sr} for such equations. In \S\ref{ssubsec:(p,A)-Laplacian} we recall several well-known local regularity and compactness properties of (sub/super)solutions of equation (\ref{intro:pLaplace}) found in \cite{MZ} and \cite{PS}.

In \S\ref{sec:eigenvalue} we deal with bounded domains. Firstly, in \S\ref{subsec:prelims:eigenvalue} we establish some helpful lemmas, including the estimate \eqref{Ln} that extends to our case, a well-known inequality of P.~Lindqvist \cite{Ln} proved for the $p$-Laplace equation and concerns the positivity of the corresponding $I$ functional of Anane \cite{An} (see also Diaz and Saa \cite{DS}). We note that \eqref{Ln} replaces throughout our paper Picone's identity of Allegretto and Huang \cite{AllH}; a key tool in \cite{PT1,PR}. In addition, we prove in \S\ref{subsec:prelims:eigenvalue} the weak lower semicontinuity and the coercivity for two functionals related to the solvability of the Dirichlet problem in bounded domains. In \S\ref{subsec:eigenvalue} we use the results from \S\ref{subsec:prelims:eigenvalue} to prove the existence, simplicity and isolation of the principal eigenvalue $\lambda_1$ in a {\em general} bounded domain. Then we extend the main result in \cite{GMdL} concerning the equivalence of $\lambda_1$ being positive, the validity of the weak/strong maximum principle, and the existence of a unique positive solution for the Dirichlet problem
 \be\nonumber
  Q'_{A,p,V}[v]=g \;\;\mbox{ in } \om,\quad v\in \W(\om), \qquad  \mbox{where }g\in L^{p'}(p;\om)\mbox{ is nonnegative.}
 \ee

In passing from local to global, the results in bounded domains of \S\ref{sec:eigenvalue} are exploited in the last two sections. More precisely, in \S\ref{subsec:aap} we establish the AP theorem while in \S\ref{subsec:criticality} we prove among other results the equivalence of the first four statements of the Main Theorem. In addition,  we prove a Poincar\'{e}-type inequality for critical operators, and a Liouville comparison principle, generalizing results in \cite{PT1} and  \cite{P,PTT}, respectively (see also \cite{PR}).

The last two statements of the Main Theorem are treated in \S\ref{ssubsec:minimal} after establishing a suitable weak comparison principle (WCP) in \S\ref{ssubsec:prelims:minimal}, and the behaviour of positive solutions near an isolated singularity in \S\ref{ssubsec:isolated}.

We emphasize here, that generally speaking, we omit straightforward proofs that follow exactly the same steps as in the aforementioned papers, provided the needed tools have been obtained.
\section{Preliminaries}\label{sec:preliminaries}
In this section we fix our setting and notation, introduce some definitions, and review basic local regularity results of solutions of the equation (\ref{intro:pLaplace}).

\medskip

Throughout the paper we assume that
\begin{itemize}
  \item $1<p<\infty$.
  \item $\Om$ is a domain (an open and connected set) in $\Rn$, where $n\geq2$.
  \item $A=(a_{ij})\in L_{\rm loc}^{\infty}(\Om;\Rnn)$ is a \emph{symmetric} and \emph{locally uniformly positive definite} matrix.
\end{itemize}
The assumptions on $A$ imply in particular that
\begin{align*}\label{assumption:A}
\tag{S} & a_{ij}(x)=a_{ji}(x)\qquad \mbox{for a.e. } x\in\Om, \mbox{ and } i,j=1,...,n,  \\[0mm]
\tag{E} &  \forall\om\Subset\Om,\quad \exists\theta_\om>0~\mbox{ such that }~
 \theta_\om|\xi|\leq |\xi|_A
  \leq
   \theta_\om^{-1}|\xi|\qquad \mbox{for a.e. }x\in\om \mbox{ and } \forall\xi\in\Rn,
\end{align*}
where we have set
 \be\nonumber
  |\xi|_{A}
   :=
    \sqrt{A(x)\xi\cdot\xi}=\sqrt{\sum_{i,j=1}^na_{ij}(x)\xi_i\xi_j}
     \qquad \mbox{for a.e. }x\in\Om\mbox{ and }\xi=(\xi_1,...,\xi_n)\in\Rn.
 \ee
\noindent Moreover, we adopt the following notation:

\medskip

$q'$ is the conjugate index of $q\in(1,\infty)$, i.e. $q'=q/(q-1)$.

$\om\Subset\Om$ means $\om$ is a subdomain of $\Om$ with compact closure in $\Om$.

$B_r(y):=\{x\in\Rn:|x-y|<r\}$, where  $r>0,~y\in\Rn$.

$\Ln(E)$ is the Lebesgue measure of a measurable set $E\subset\Rn$.

$\langle f\rangle_\om$ is the mean value of a function $f$ in $\om$.

$\sprt\{f\}$ is the support of $f$.

$f^{+}:=\max\{f,0\}$, $f^-:=-\min\{f,0\}$ are the positive and negative parts of $f$, respectively.

$\gamma$ and $\gamma'$ will always stand for numbers in $(0,1)$.

$I_{n}$ is the identity matrix of size $n\times n$. 

$C(a,b,...)$ is a positive constant depending only on $a,b...~$, and may be different from line to line.
\subsection{Local Morrey spaces}\label{ssubsec:Morrey}
In the present subsection we introduce a certain class of Morrey spaces that depend on the index $p$, where $1<p<\infty$. It is the class of spaces where the potential $V$ of the operator $Q'_{A,p,V}$ belongs to.

\begin{definition}
Let $q\in[1,\infty]$ and $\om\Subset\Rn$. For a measurable, real valued function $f$ defined in $\om$, we set
 \be\nonumber
  \|f\|_{M^q(\om)}
   :=
    \sup_{\substack{y\in\om\\r<\diam(\om)}}\frac{1}{r^{n/q'}}\int_{\om\cap B_r(y)}|f|\dx.
 \ee
We write then $f\in M_{\rm loc}^q(\Om)$ if for any $\om\Subset\Om$ we have $\|f\|_{M^q(\om)}<\infty$.
\end{definition}

\begin{remark}\label{remark:Morrey} Note that $M_{\rm loc}^1(\Om)\equiv  L_{\rm loc}^1(\Om)$ and $M_{\rm loc}^\infty(\Om)\equiv L_{\rm loc}^\infty(\Om)$, but $L_{\rm loc}^q(\Om)\subsetneq M_{\rm loc}^q(\Om)\subsetneq L^1_{\rm loc}(\Om)$ for any $q\in(1,\infty)$.
\end{remark}

\noindent For the regularity theory of equations with coefficients in Morrey spaces we refer to the monographs \cite{MZ,M}, and also to the papers \cite{R} and \cite{BP} for further regularity issues. For generalizations of the Morrey spaces and other applications to analysis and systems of equations we refer to \cite{Ptr}, \cite{AdX1} and \cite{AdX2}.

\medskip

Next we define a special local Morrey space $M^q_{\rm loc}(p;\Om)$ which depends on the values of the exponent $p$.

\begin{definition}\label{def:q} For $p\neq n$, we define
 \be\nonumber
  M^q_{\rm loc}(p;\Om)
   :=
    \Bigg\{
     \begin{array}{ll}
      M^q_{\rm loc}(\Om)\mbox{ with }q>n/p & \mbox{if }p<n,
       \\[8pt]
      L^1_{\rm loc}(\Om) & \mbox{if }p>n,
     \end{array}
 \ee
while for $p=n$, $f\in M^q_{\rm loc}(n;\Om)$ means that for some $q>n$ and any $\om\Subset\Om$ we have
 \be\nonumber
  \|f\|_{M^q(n;\om)}
   :=
    \sup_{\substack{y\in\om\\0<r<\diam(\om)}}\varphi_q(r)\int_{\om\cap B_r(y)}|f|\dx
     <
      \infty,
 \ee
where $\varphi_q(r):=\log(\diam(\om)/r)^{q/n'}$ and $0<r<\diam(\om)$.
\end{definition}
In what follows we will frequently use the following key fact (sometimes called an uncertainty-type inequality) originally due to Morrey and further generalized by Adams (see \cite[Lemmas 5.2.1 \& 5.4.2]{M} for $p=2$, \cite[Lemma 5.1]{Tr} for $1<p<n,$ and \cite{RZ}, \cite[Corollary 1.95]{MZ}).

\begin{theorem}[Morrey-Adams theorem]\label{theorem:uncertainty}
Let $\om\Subset\Rn$, and suppose that $V\in M^q(p;\om)$.

$(i)$ There exists a constant $C(n,p,q)>0$ such that for any $\delta>0$ and all $u\in\W(\om)$
 \be\label{ineq:uncertainty}
  \int_\om|V||u|^p\dx
   \leq
    \delta\|\nabla u\|^p_{L^p(\om;\Rn)}
     +
      \frac{C(n,p,q)}{\delta^{n/(pq-n)}}\|V\|_{M^q(p;\om)}^{pq/(pq-n)}\|u\|^p_{L^p(\om)}.
 \ee

$(ii)$ For any $\om'\Subset\om$ with Lipschitz boundary there exist positive constant $C(n,p,q,\om',\om)$ and $\delta_0$ such that for any $0<\delta\leq \delta_0$ and all $u\in W^{1,p}(\om')$
 \be\nonumber
  \int_{\om'}|V||u|^p\dx
   \leq
    \delta\|\nabla u\|^p_{L^p(\om';\Rn)}
     +
       C(n,p,q,\delta,\|V\|_{M^q(p;\om)})\|u\|^p_{L^p(\om')}.
 \ee
\end{theorem}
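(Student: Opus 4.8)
The plan is to reduce part $(i)$ to a dyadic covering argument combined with the local $L^p$ Sobolev/Poincar\'e inequality, and then obtain part $(ii)$ from part $(i)$ via an extension operator. For part $(i)$, I would first treat the subcritical case $p<n$, which is the heart of the matter. Fix $u\in\W(\om)$ and extend it by zero to all of $\Rn$; write $B=B_R(y_0)$ for a ball containing $\om$ with $R\sim\diam(\om)$. The idea is to control $\int_\om|V||u|^p$ by splitting $\R^n$ into a Whitney-type dyadic family of balls $\{B_k\}$ of radii $r_k$ and, on each, estimate $\int_{B_k}|V||u|^p\le\|V\|_{M^q(\om)}\,r_k^{n/q'}\,\|u\|_{L^\infty(B_k)}^p$ only when that is convenient; more robustly, I would instead use the representation of $|u(x)|^p$ through the Riesz potential of $|\nabla(|u|^p)|\lesssim|u|^{p-1}|\nabla u|$ and appeal to the boundedness of the fractional maximal / Riesz potential operator from $L^1$ into the Morrey-dual scale. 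Concretely: since $V\in M^q(\om)$ with $q>n/p$, one has $n/q'=n-n/q<n-p' \cdot\!\ldots$; the precise point is that the Morrey norm condition is exactly the condition making the bilinear form $(V,|u|^p)\mapsto\int V|u|^p$ continuous with a gain of derivative, i.e. $\int|V||u|^p\lesssim \|V\|_{M^q}\,\|u\|_{L^p}^{p-1}\,\|u\|_{L^{p^*}}\lesssim \|V\|_{M^q}\,\|u\|_{L^p}^{p-1}\,\|\nabla u\|_{L^p}$ by Sobolev. Then Young's inequality $ab\le \delta a^{p}+C_\delta b^{p'}$ applied to $a=\|\nabla u\|_{L^p}$ and $b=\|V\|_{M^q}\|u\|_{L^p}^{p-1}$, after raising to appropriate powers, produces precisely the claimed inequality with the exponent $pq/(pq-n)$ on $\|V\|_{M^q}$ and $n/(pq-n)$ on $1/\delta$; one checks the exponents by scaling ($u\mapsto u(\lambda x)$ forces the stated homogeneous powers, which is a good sanity check).

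For the borderline case $p=n$ one replaces the Sobolev embedding $W^{1,p}\hookrightarrow L^{p^*}$ by the Trudinger--Moser embedding into exponential Orlicz space; the logarithmic weight $\varphi_q(r)=\log(\diam(\om)/r)^{q/n'}$ in Definition~\ref{def:q} is tuned exactly so that the dual pairing still closes, and the same Young's inequality step goes through with constants now depending on $q>n$. For $p>n$ the space is just $L^1_{\rm loc}$, and Morrey's inequality $W^{1,p}\hookrightarrow C^{0,1-n/p}$ gives $\|u\|_{L^\infty(\om)}\lesssim \|u\|_{L^p(\om)}^{1-\theta}\|\nabla u\|_{L^p(\om)}^\theta$ on bounded Lipschitz-ish pieces; one again pulls $\|u\|_{L^\infty}^p$ out of $\int|V||u|^p\le\|V\|_{L^1(\om)}\|u\|_{L^\infty(\om)}^p$ and finishes with Young. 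In all three cases the only genuine analytic input is the relevant Sobolev-type embedding plus the observation that the Morrey condition is the sharp integrability making the product pairing subcritical; everything else is bookkeeping of exponents.

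For part $(ii)$, let $\om'\Subset\om$ have Lipschitz boundary and let $E\colon W^{1,p}(\om')\to\W(\om'')$ be a bounded linear extension operator, where $\om'\Subset\om''\Subset\om$ is an intermediate Lipschitz domain; such $E$ exists by the standard Stein/Calder\'on extension theorem since $\partial\om'$ is Lipschitz. Given $u\in W^{1,p}(\om')$, apply part $(i)$ on $\om''$ to $Eu$:
\[
\int_{\om'}|V||u|^p\dx\le\int_{\om''}|V||Eu|^p\dx\le \delta\|\nabla(Eu)\|_{L^p(\om'';\Rn)}^p+\frac{C(n,p,q)}{\delta^{n/(pq-n)}}\|V\|_{M^q(p;\om'')}^{pq/(pq-n)}\|Eu\|_{L^p(\om'')}^p.
\]
Now $\|Eu\|_{W^{1,p}(\om'')}\le C(\om',\om'')\|u\|_{W^{1,p}(\om')}$, so expanding $\|\nabla(Eu)\|_{L^p(\om'')}^p\le C^p(\|\nabla u\|_{L^p(\om')}^p+\|u\|_{L^p(\om')}^p)$ and absorbing, we obtain the stated inequality with a new $\delta_0$ forced by the requirement that the coefficient $C^p\delta$ of $\|\nabla u\|_{L^p(\om')}^p$ stays below the prescribed level, and with $C(n,p,q,\delta,\|V\|_{M^q(p;\om)})$ collecting all remaining constants (note $\|V\|_{M^q(p;\om'')}\le\|V\|_{M^q(p;\om)}$). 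The main obstacle, such as it is, is not conceptual but the careful tracking of the scaling exponents in part~$(i)$ and making sure the extension operator in part~$(ii)$ is applied on a domain strictly inside $\om$ so that the Morrey norm on the larger domain is still finite; both are routine given the embeddings cited from \cite{M,Tr,MZ,RZ}. Since the statement is quoted verbatim from those references, I would in fact simply cite it, and present the above only as the structure of the argument.
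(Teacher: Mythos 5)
Your overall architecture matches the paper's: for $p\le n$ the paper likewise does not prove the inequality but cites Mal\'y--Ziemer \cite{MZ} (Corollary~1.95 for $p<n$, and a repetition of that proof using Theorem~1.94 in place of Theorem~1.93 for $p=n$); for $p>n$ it runs exactly the Sobolev/Gagliardo--Nirenberg plus Young's inequality argument you outline; and part~$(ii)$ is obtained, as you say, by applying part~$(i)$ to $Eu$ for a bounded extension operator $E$ (the paper extends $W^{1,p}(\om')\to\W(\om)$ directly rather than into an intermediate $\om''$, which is immaterial). Since you conclude by saying you would in fact cite the result and present the sketch only as the structure of the argument, your plan is consistent with the paper.

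That said, the concrete closing step you wrote for the case $p<n$ is not correct and would not deliver the stated exponents if carried out. The intermediate estimate $\int_\om|V|\,|u|^p\dx\lesssim\|V\|_{M^q(\om)}\|u\|_{L^p(\om)}^{p-1}\|u\|_{L^{p^*}(\om)}$ is not scale-invariant: under $u\mapsto u(\lambda\cdot)$, $V\mapsto V(\lambda\cdot)$ the left side scales as $\lambda^{-n}$ while the right side scales as $\lambda^{-n/q-n+1}$, so they can only agree when $q=n$. Moreover, Young's inequality applied in the form you state, with $a=\|\nabla u\|_{L^p}$ and $b=\|V\|_{M^q}\|u\|_{L^p}^{p-1}$, produces the exponent $p'=p/(p-1)$ on $\|V\|_{M^q}$, not $pq/(pq-n)$; again these coincide only at $q=n$. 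The inequality that the Adams potential-theoretic argument in \cite{MZ} actually establishes, and which the scaling you alluded to forces, is
\be\nonumber
\int_\om|V|\,|u|^p\dx\ \le\ C(n,p,q)\,\|V\|_{M^q(\om)}\,\|\nabla u\|_{L^p(\om;\Rn)}^{\,n/q}\,\|u\|_{L^p(\om)}^{\,p-n/q},
\ee
and applying Young's inequality with conjugate exponents $pq/n$ and $pq/(pq-n)$ to $a=\|\nabla u\|_{L^p}^{n/q}$ and $b=C\|V\|_{M^q}\|u\|_{L^p}^{p-n/q}$ then yields precisely $\delta\|\nabla u\|_{L^p}^p+C\delta^{-n/(pq-n)}\|V\|_{M^q}^{pq/(pq-n)}\|u\|_{L^p}^p$. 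So either carry out the scaling check you mentioned (it reveals the bug), or, as you ultimately propose, simply cite the reference, as the paper does.
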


\begin{proof} $(i)$ The case where $p\leq n$ is contained in \cite{MZ}. In particular, for $p<n$ this follows from \cite[Corollary 1.95]{MZ} (see also inequality (3.11) therein), while for $p=n$ one repeats that proof using \cite[Theorem 1.94]{MZ} instead of \cite[Theorem 1.93]{MZ}. Thus, we only need to argue for $p>n$. In this case our assumption reads $V\in L^1(\om)$. Recall also that by the Sobolev embedding theorem we have $\W(\om)\subset C(\om)$. It follows that
 \bea\nonumber & &
  \int_\om|V||u|^p\dx
   \leq
    \|V\|_{L^1(\om)}\|u\|^p_{L^\infty(\om)}
     \\ \nonumber & & \hspace{5.8em} \leq
      C(n,p)\|V\|_{L^1(\om)}\|\nabla u\|^{n}_{L^p(\om;\Rn)}\|u\|^{p-n}_{L^p(\om)},
 \eea
where we have used the Gagliardo-Nirenberg inequality (see for example \cite[Theorem 1.1 in \S IX]{DB}). The result follows by applying Young's inequality:
 \be\nonumber
  a b
   \leq
    \delta a^{p/n}+\frac{p-n}{p}\Big(\frac{n}{p\delta}\Big)^{n/(p-n)}b^{p/(p-n)},
 \ee
with $a=\|\nabla u\|^{n}_{L^p(\om)}$, $b=C(n,p)\|V\|_{L^1(\om)}\|u\|^{p-n}_{L^p(\om)}$.

$(ii)$ Let $\om'\Subset\om$ with $\partial\om'$ being Lipschitz. We may then consider the extension operator (see for example \cite[\S4.4]{EvG})
 \be\nonumber
  E:W^{1,p}(\om')\rightarrow\W(\om)
 \ee
such that for any $u\in W^{1,p}(\om')$ to have
 \be\label{exte}\left\{
  \begin{array}{lll}
   Eu=u\qquad \mbox{in }\om',
  \\[2mm]
   \|Eu\|_{L^p(\om)}\leq C(n,p,\om',\om)\|u\|_{L^p(\om')},
  \\[2mm]
   \|\nabla(Eu)\|_{L^p(\om;\Rn)}\leq C(n,p,\om',\om)\|u\|_{W^{1,p}(\om';\Rn)}.
  \end{array}\right.
 \ee
Thus, if $\delta>0$ and $u\in W^{1,p}(\om')$, it follows from \eqref{ineq:uncertainty} that
  \be\nonumber
  \int_{\om}|V||Eu|^p\dx
   \leq
    \delta\|\nabla(Eu)\|^p_{L^p(\om;\Rn)}
     +
      \frac{C(n,p,q)}{\delta^{n/(pq-n)}}\|V\|_{M^q(p;\om)}^{pq/(pq-n)}\|Eu\|^p_{L^p(\om)}.
 \ee
Applying (\ref{exte}) to the latter inequality yields $(ii)$. \end{proof}
\subsection{Regularity assumptions on $A$ and $V$}\label{ssubsec:regularity}
 We are now ready to introduce our regularity hypotheses on the coefficients of the operator $Q'_{A,p,V}$. Throughout the paper we assume that
\[\tag{H0}
 \mbox{the matrix }A\mbox{ satisfies }({\rm S}),~({\rm E}),\mbox{ and the potential }V\in M^q_{\rm loc}(p;\Om).
\]
In the sequel, in the case $1<p<2$, we sometimes make the following stronger hypothesis:

\[\tag{H1}
 A\in C^{0,\gamma}_{\rm loc}(\Om;\Rnn)\mbox{ satisfies }({\rm S}),~({\rm E}),\mbox{ and }V\in M^q_{\rm loc}(\Om),\mbox{ where }q>n.
\]
\subsection{The $(p,A)$-Laplacian with a potential term in $M^q_{\rm loc}(p;\Om)$}\label{ssubsec:(p,A)-Laplacian}
For a vector field $T\in L^1_{\rm loc}(\Om;\Rn)$ we define
 \be\nonumber
  \Div_AT
   :=
    \Div(AT),
 \ee
where $\Div(AT)$ is meant in the distributional sense.

In this paper we are interested in the $(p,A)$-Laplacian equation plus a potential term, that is
 \be\label{Q'=0}
  Q'_{A,p,V}[v] := -\Div_A(|\nabla v|_A^{p-2}\nabla v) + V|v|^{p-2}v  = 0\qquad \mbox{in }\Om.
 \ee
This is the Euler-Lagrange equation associated with the functional
 \be\label{Q}
  Q_{A,p,V}[u]
   :=
    \int_\Om\Big(|\nabla u|_A^p+V|u|^p\Big)\dx\qquad u\in\test(\Om).
 \ee

\begin{definition}\label{def.weak:sol:Q'=0}
Assume that $A$ and $V$ satisfy (H0). A function $v\in\Wloc(\Om)$ is a {\em solution} of (\ref{Q'=0}) in $\Om$ if
 \be\label{weak:sol:Q}
  \int_\Om|\nabla v|_A^{p-2}A\nabla v\cdot\nabla u\dx
   +\int_\Om V|v|^{p-2}vu\dx
    =0
     \qquad\mbox{for all }u\in\test(\Om),
 \ee
and a {\em (sub)supersolution} of (\ref{Q'=0}) in $\Om$ if
 \be\label{weak:ssol:Q}
  \int_\Om|\nabla v|_A^{p-2}A\nabla v\cdot\nabla u\dx
   +\int_\Om V|v|^{p-2}vu\dx
    \;(\leq)\geq0
     \qquad\mbox{for all nonnegative }u\in\test(\Om).
 \ee
A {\it strict supersolution} of (\ref{Q'=0}) in $\Om$ is a supersolution which is not a solution.
\end{definition}

\begin{remark}
The above definition makes sense because of condition $\rm{(E)}$, the Morrey-Adams theorem (Theorem~\ref{theorem:uncertainty}), and H\"{o}lder's inequality. In light of our assumptions on $A$ and $V$, and by a density argument, one can replace $\test(\Om)$ in Definition~\ref{def.weak:sol:Q'=0} by $\wtest(\Om)$, the space of all $L^p(\Om)$ functions having compact support in $\Om$ and first-order weak partial  derivatives in $L^p(\Om)$.
\end{remark}

\noindent The following theorem follows from \cite[Theorem 3.14]{MZ} for the case $p\leq n$, and from \cite[Theorem 7.4.1]{PS} for the case $p>n$.

\begin{theorem}[Harnack inequality]\label{theorem:harnack}\label{thrm:regularity}
Under hypothesis {\rm(H0)}, any nonnegative solution $v$ of $(\ref{Q'=0})$ in $\Om$ satisfies the local Harnack inequality. Namely, for any $\om'\Subset\om\Subset\Om$ there holds
 \be\label{Harnack}
  \sup_{\om'}v\leq C\inf_{\om'}v,
 \ee
where $C$ is a positive constant depending only on $n,p,q,\dist(\om',\om), \theta_\om$, and $\|V\|_{M^q(\om)}$ (and not on $v$).
\end{theorem}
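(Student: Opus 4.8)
The plan is to reduce the Harnack inequality for the quasilinear equation \eqref{Q'=0} to the already-established De Giorgi--Nash--Moser theory for divergence-type quasilinear equations with lower-order terms in Morrey spaces, exactly as developed in \cite{MZ} for $p\le n$ and in \cite{PS} for $p>n$. The key observation is that under hypothesis (H0) and the ellipticity condition (E), the operator $Q'_{A,p,V}$ fits, locally on any $\om\Subset\Om$, into the structural framework of those references: writing $\mathcal{A}(x,v,\nabla v):=|\nabla v|_A^{p-2}A(x)\nabla v$ and $\mathcal{B}(x,v,\nabla v):=V(x)|v|^{p-2}v$, condition (E) gives the two-sided bound $\theta_\om^{p-1}|\nabla v|^{p-1}\le |\mathcal{A}(x,v,\nabla v)|\le \theta_\om^{-(p-1)}|\nabla v|^{p-1}$ and the coercivity $\mathcal{A}(x,v,\nabla v)\cdot\nabla v=|\nabla v|_A^p\ge \theta_\om^p|\nabla v|^p$, while $|\mathcal{B}(x,v,\nabla v)|\le |V(x)|\,|v|^{p-1}$ with $V\in M^q_{\rm loc}(p;\Om)$, which is precisely the admissible Morrey class for the lower-order coefficient in \cite[Theorem 3.14]{MZ} (resp.\ \cite[Theorem 7.4.1]{PS}). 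So the first step is simply to verify that a nonnegative weak solution in the sense of Definition~\ref{def.weak:sol:Q'=0} is a nonnegative weak solution in the sense of those theorems.

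First I would fix $\om'\Subset\om\Subset\Om$ and record the local bounds: by (E) choose $\theta_\om>0$, and by (H0) note $\|V\|_{M^q(p;\om)}<\infty$. Then I would state the structural inequalities above and invoke the cited Harnack theorems on a slightly larger intermediate domain, to obtain \eqref{Harnack} on $\om'$ with the constant $C$ depending only on $n,p,q,\dist(\om',\om),\theta_\om$, and $\|V\|_{M^q(\om)}$; the role of the Morrey--Adams inequality (Theorem~\ref{theorem:uncertainty}) is implicit here, since it is what makes the lower-order term a subordinate perturbation in the Moser iteration of \cite{MZ,PS}. For the borderline case $p=n$ the relevant statement in \cite{MZ} is phrased with the logarithmic Morrey weight $\varphi_q$ of Definition~\ref{def:q}, which is exactly our hypothesis, so no extra work is needed. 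One should also remark that the solution is assumed only in $W^{1,p}_{\rm loc}(\Om)$, which is the regularity class in which the Moser iteration is run.

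The main (and essentially only) obstacle is a bookkeeping one: confirming that the definition of weak solution adopted here coincides with the one in the references, including the admissible test-function class (by the Remark following Definition~\ref{def.weak:sol:Q'=0} one may use $\wtest(\Om)$, which matches the truncated test functions $v\mapsto \eta^p(v+k)^{\beta}$ used in Moser iteration), and that the Morrey exponent conventions agree — in particular that $M^q(p;\om)$ as defined in Definition~\ref{def:q} is contained in the coefficient class called for in \cite[Theorem 3.14]{MZ}. There is no genuinely new analytic content: the theorem is a direct transcription of known results once the operator is recognized as belonging to the Serrin--Trudinger class with Morrey lower-order data, which is why the statement is attributed to \cite{MZ} and \cite{PS}. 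I would therefore keep the proof to a short paragraph that (i) writes down the structure conditions, (ii) cites the two theorems for $p\le n$ and $p>n$ respectively, and (iii) notes the dependence of the constant.
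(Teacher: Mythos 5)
Your approach is exactly the paper's: the theorem is stated as a direct consequence of \cite[Theorem 3.14]{MZ} for $p\le n$ and \cite[Theorem 7.4.1]{PS} for $p>n$, with no additional argument given, since the operator falls within the Serrin--Trudinger structural class with Morrey lower-order data. Your extra paragraph verifying the structure conditions (the two-sided bounds from (E), the growth bound $|\mathcal{B}|\le|V||v|^{p-1}$, and the matching of $M^q_{\rm loc}(p;\Om)$ with the admissible coefficient class) is a useful fleshing-out of what the paper leaves implicit, but it is not a different route.
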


\begin{remark}[Local H\"{o}lder continuity]\label{remark:regularity} A standard consequence of Theorem~\ref{theorem:harnack} is the following regularity assertion found in \cite[Theorem 4.11]{MZ} for $p\leq n$, and in \cite[Theorem 7.4.1 ]{PS} for $p>n$:

\medskip

{\it Under hypothesis {\rm(H0)}, any solution $v$ of $(\ref{Q'=0})$ in $\Om$ is locally H\"{o}lder continuous of order $\gamma$ (depending on $n,p,q$, and $\theta_\om$), and for any
$\om'\Subset\om\Subset\Om$, we have
\be\label{eq_local_holder}
  [v\,]_{\gamma,\om'} \leq C \sup_{\om}|v|,
 \ee
where $C$ is a positive constant depending only on $n,p,q,\dist(\om',\om)$,  $\theta_\om$,  and $\|V\|_{M^q(\om)}$. Here $[v\,]_{\gamma,\om'}$ is the H\"older seminorm of $v$ in $\om'$.
}
\end{remark}

\begin{remark}[Local Lipschitz continuity]\label{remark:local grad bound}
Later on, when proving Lemma~\ref{crit} for $p<2$, we will need conditions under which the local Lipschitz continuity of solutions is guaranteed. In other words, in the case $p<2$ we will need conditions that ensure the local boundedness of the modulus of the gradient of a solution of (\ref{Q'=0}). This and more are provided by \cite[Theorem 5.3]{Lb}:

\medskip

{\it Under hypothesis {\rm(H1)}, any solution $v$ of $(\ref{Q'=0})$ in $\Om$ is of class $C_{\rm loc}^{1,\gamma'}(\Om)$ for some $\gamma'\in(0,1)$ depending only on $n,p,\gamma$, $q$ and $\theta_\om$.}

\medskip

\noindent In particular, we will use the fact that whenever $\om'\Subset\om\Subset\Om$, then
\be\nonumber
  \sup_{\om'}|\nabla v|\leq C  \sup_{\om}|v|,
 \ee
for some positive constant $C$, depending only on $n,p,\gamma,q,\dist(\om',\om)$,  $\theta_\om$, $\|A\|_{C^{0,\gamma}(\om)}$,  and $\|V\|_{M^q(\om)}$.
\end{remark}

\begin{remark}[Weak Harnack inequality]\label{remark:harnack_supersol} For $p>n$, Theorem~\ref{theorem:harnack} holds true verbatim if $v$ is merely a nonnegative supersolution of (\ref{Q'=0}) in $\Om$ (see \cite[Theorem 7.4.1]{PS}). For $p\leq n$ we only have \cite[Theorem 3.13]{MZ}:

\medskip

{\it Let $p\leq n$ and set $s=n(p-1)/(n-p)$. Under hypothesis {\rm(H0)}, any nonnegative supersolution $v$ of $(\ref{Q'=0})$ in $\Om$ satisfies the weak Harnack inequality, namely, for any $\om'\Subset\om\Subset\Om$ and $0<t<s$ there holds
 \be\label{weakHarnack}
  \|v\|_{L^{t}(\om')}\leq C\inf_{\om'}v,
 \ee
where $C$ is a positive constant depending only on $n,p,t,\dist(\om',\om),\Ln(\om')$ and $\|V\|_{M^q(\om)}$.}
\end{remark}

We conclude the section with the following important result that will be used several times throughout the paper.
\begin{proposition}{\bf[Harnack convergence principle]}\label{Harnack:cp}
Consider a matrix $A\in L^\infty(\Om;\Rnn)$ which satisfies conditions ${\rm(A)}$ and ${\rm(E)}$. Let $\{\om_i\}_{i\in\N}$ be a sequence of Lipschitz domains such that $\om_i\Subset\Om$, $\om_i\Subset\om_{i+1}$ for $i\in\N$, and $\cup_{i\in\N}\om_i=\Om$, and fix a reference point $x_0\in \om_1$. Assume also that $\{\V_i\}_{i\in\N}\subset M^q(p;\om_i)$ converges in $M^q_{\rm loc}(p;\Om)$ to $\V\in M^q_{\rm loc}(p;\Om)$. For each $i\in\N$, let $v_i$ be a positive solution of the equation $Q'_{A_i,p,\V_i}[v]=0$ in $\om_i$  such that $v_i(x_0)=1$.

Then there exists then $0<\beta<1$, so that up to a subsequence, $\{v_i\}$ converges in $C^{0,\beta}_{\rm loc}(\Om)$ to a positive solution $v$ of the equation $Q'_{A,p,\V}[v]=0$ in $\Om$.
\end{proposition}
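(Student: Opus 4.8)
The plan is to exploit the uniform local Harnack inequality (Theorem~\ref{theorem:harnack}) together with the uniform local Hölder estimate (Remark~\ref{remark:regularity}) to extract a convergent subsequence via Arzelà–Ascoli, and then pass to the limit in the weak formulation using the Morrey–Adams theorem to control the potential terms. First I would fix an arbitrary $\om\Subset\om'\Subset\Om$ and note that, since $\cup_i\om_i=\Om$ and $\om_i\Subset\om_{i+1}$, there is an index $i_0$ with $\om'\Subset\om_i$ for all $i\geq i_0$; hence each $v_i$ is (for large $i$) a positive solution of $Q'_{A,p,\V_i}[v]=0$ in a fixed neighbourhood of $\overline{\om'}$. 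The convergence $\V_i\to\V$ in $M^q_{\rm loc}(p;\Om)$ gives $\sup_{i\geq i_0}\|\V_i\|_{M^q(p;\om')}<\infty$, so the constants in Theorems~\ref{theorem:harnack} depend on $i$ only through this uniform bound, i.e.\ they are uniform in $i$.

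The next step is to obtain a uniform $L^\infty_{\rm loc}$ bound. Connecting $x_0$ to any point of $\overline{\om'}$ by a chain of overlapping balls contained in $\om''$ (a slightly larger subdomain, still compactly contained in $\Om$ and in $\om_i$ for large $i$) and iterating the Harnack inequality \eqref{Harnack} a fixed number of times, one gets $C^{-1}\leq v_i\leq C$ on $\om'$ for a constant $C=C(\om',\Om,\dots)$ independent of $i$, using the normalization $v_i(x_0)=1$. Feeding this uniform bound into the Hölder estimate \eqref{eq_local_holder} yields a uniform bound on $[v_i]_{\gamma,\om}$ for a fixed exponent $\gamma\in(0,1)$ (which by the cited results depends only on $n,p,q,\theta$, not on $i$). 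By Arzelà–Ascoli and a diagonal argument over an exhaustion of $\Om$, a subsequence converges in $C^{0,\beta}_{\rm loc}(\Om)$ for any $\beta<\gamma$ to some $v\in C^{0,\beta}_{\rm loc}(\Om)$; the uniform lower bound $v_i\geq C^{-1}$ on compact sets passes to the limit, so $v>0$ in $\Om$, and $v(x_0)=1$.

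It remains to show $v$ is a weak solution of $Q'_{A,p,\V}[v]=0$ in $\Om$. Fix $u\in\test(\Om)$ with support in some $\om\Subset\Om$. For large $i$ the identity \eqref{weak:sol:Q} holds with $v_i,\V_i$ on $\om$. To pass to the limit in the first term one needs the gradients to converge; here I would invoke the standard local gradient-compactness result for equations of this type (the same one underlying \cite[Theorem 3.14]{MZ}, cf.\ the references in \S\ref{ssubsec:(p,A)-Laplacian}): from the uniform $L^\infty_{\rm loc}$ bound one gets a uniform $W^{1,p}_{\rm loc}$ bound, and then (via a Caccioppoli estimate together with the monotonicity of $\xi\mapsto|\xi|_A^{p-2}A\xi$) one upgrades weak $W^{1,p}_{\rm loc}$ convergence to strong $W^{1,p}_{\rm loc}$ convergence, hence a.e.\ convergence of $\nabla v_i$; therefore $|\nabla v_i|_A^{p-2}A\nabla v_i\to|\nabla v|_A^{p-2}A\nabla v$ in $L^{p'}_{\rm loc}$. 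For the potential term, write
\[
 \int_\om \V_i|v_i|^{p-2}v_i u\dx - \int_\om \V|v|^{p-2}v u\dx
  = \int_\om \V_i\big(|v_i|^{p-2}v_i-|v|^{p-2}v\big)u\dx + \int_\om (\V_i-\V)|v|^{p-2}v u\dx;
\]
the second integral is bounded by $\|u|v|^{p-2}v\|_{L^\infty}$ times $\|\V_i-\V\|_{L^1(\om)}\to0$, and for the first one applies the Morrey–Adams inequality \eqref{ineq:uncertainty} (with the uniform bound on $\|\V_i\|_{M^q(p;\om)}$) to the function $|v_i|^{p-2}v_i-|v|^{p-2}v$, which converges to $0$ uniformly on $\om$ and whose gradients are bounded in $L^p(\om)$ by the strong $W^{1,p}$ convergence — so this term also vanishes in the limit. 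Passing to the limit in \eqref{weak:sol:Q} then gives that $v$ satisfies the weak equation for $Q'_{A,p,\V}$ against every $u\in\test(\Om)$, completing the proof.

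I expect the main obstacle to be the upgrade from weak to strong $W^{1,p}_{\rm loc}$ convergence of the gradients, which is needed to pass to the limit in the nonlinear flux term; this is the one place where mere uniform Hölder/Harnack bounds are not enough and one must invoke (or reprove) the local compactness machinery for $(p,A)$-Laplace-type operators, carefully checking that the constants there are likewise uniform under $\V_i\to\V$ in $M^q_{\rm loc}(p;\Om)$.
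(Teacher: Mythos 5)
Your plan follows the paper's own proof quite closely: Arzel\`{a}--Ascoli from the uniform Harnack/H\"older estimates, a Caccioppoli estimate (obtained by testing with $v_i|u|^p$) giving uniform $W^{1,p}_{\rm loc}$ bounds and weak compactness, and then passage to the limit in the weak formulation by testing with $u(v_i-v)$ and using the monotonicity of $\xi\mapsto|\xi|_A^{p-2}A\xi$. Two minor divergences are worth noting. For the potential term the paper passes to the limit by dominated convergence, whereas your two-term split using $\|\V_i-\V\|_{L^1(\om)}\to0$ together with the uniform convergence $v_i\to v$ is equally valid and sidesteps the paper's pointwise-domination claim $|\V_i v_i^{p-1}|\leq c|\V|$. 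For the flux term the paper settles for \emph{weak} $L^{p'}_{\rm loc}$ convergence of $|\nabla v_i|_A^{p-2}A\nabla v_i$, deduced from
$\int\big(|\nabla v_i|_A^{p-2}A\nabla v_i-|\nabla v|_A^{p-2}A\nabla v\big)\cdot\nabla(v_i-v)\dx\to0$
via Maz'ya's convergence lemma \cite{Mz}, which already suffices to pass to the limit in \eqref{weak:sol:Q}; your stronger claim of strong $W^{1,p}_{\rm loc}$ convergence of $v_i$ is also attainable from the same estimate (immediate from \eqref{vecineqLindqvist} for $p\geq2$, with an extra H\"older step for $p<2$), but is more than is needed. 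You correctly flag the flux compactness as the crux and leave it at the level of a citation; the concrete off-the-shelf tool the paper uses there is Maz'ya's lemma.
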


\begin{proof}The convergence in $C^{0,\beta}_{\rm loc}(\Om)$ follows by Arzel\`{a}-Ascoli theorem from the local Harnack inequality \eqref{Harnack}, and the local H\"{o}lder estimate \eqref{eq_local_holder}.

Now pick an arbitrary $\om\Subset\Om$. We will show that a subsequence of $\{v_i\}_{i\in\N}$ converges weakly in $W^{1,p}(\om)$ to a positive solution of  $Q'_{A,p,\V}[u]=0$ in $\Om$. Recall first that the definition of $v_i$ being a positive weak solutions to $Q'_{A,p,\V_i}[v]=0$ in $\om_i$ reads as
 \be\label{Q'=0_i}
  \int_{\om_i}|\nabla v_i|^{p-2}_AA\nabla v_i\cdot\nabla u\dx+\int_{\om_i}\V_iv_i^{p-1}u\dx=0
   \qquad \forall u\in\W(\om_i).
 \ee
By Remark \ref{remark:regularity}, $v_i$ are also continuous for all $i\in\N.$ Fix $k\in \mathbb{N}$. For $u\in\test(\om_k)$ we may thus pick $v_i|u|^p\in W_c^{1,p}(\om_k);~i\geq k$ as a test function in (\ref{Q'=0_i}) to get
 \be\nonumber
  \||\nabla v_i|_Au\|^p_{L^p(\om_k)}
   \leq
    p\int_{\om_k}|\nabla v_i|^{p-1}_A|u|^{p-1}v_i|\nabla u|_A\dx
     +
      \int_{\om_k}|\V_i|v_i^p|u|^p\dx.
 \ee
On the first term of the right hand side we apply Young's inequality: $pab\leq\e a^{p'}+[(p-1)/\e]^{p-1}b^p;$ $\e\in(0,1)$, with $a=|\nabla v_i|^{p-1}_A|u|^{p-1}$ and $b=v_i|\nabla u|_A$. On the second term we apply the Morrey-Adams theorem (Theorem~\ref{theorem:uncertainty}). We arrive at
 \bea\nonumber & &
  (1-\e)\||\nabla v_i|_Au\|^p_{L^p(\om_k)}
   \leq
    \big((p-1)/\e\big)^{p-1}\|v_i|\nabla u|_A\|^p_{L^p(\om_k)}
     \\ \nonumber &  & \hspace{12em} +
      \delta\|\nabla(v_iu)\|^p_{L^p(\om_k;\Rn)}
       +
         C(n,p,q,\delta,\|V\|_{M^q(p;\om_{k+1})})\|v_iu\|^p_{L^p(\om_k)}.
 \eea
By $({\rm E})$ and the simple fact that
 \be\nonumber
  \|\nabla(v_iu)\|^p_{L^p(\om_k;\Rn)}
   \leq
    2^{p-1}\big(\|v_i\nabla u\|^p_{L^p(\om_k;\Rn)}
     +
      \|u\nabla v_i\|^p_{L^p(\om_k;\Rn)}\big),
 \ee
we end up with the following Caccioppoli estimate valid for all $i\geq k$ and any $u\in\test(\om_k)$
 \begin{align}\nonumber
  &\Big((1-\e)\theta^p_{\om_k}-2^{p-1}\delta\theta^{-p}_{\om_k}\Big)\||\nabla v_i|u\|^p_{L^p(\om_k)}\\[4mm] \label{Cacc}
   & \leq
    \Big(\big(p\!-\!1)/\e\big)^{p-1}\theta^{-p}_{\om_k}\!+\!2^{p-1}\delta\Big)\|v_i|\nabla u|\|^p_{L^p(\om_k)}
             \!+\!
       C(n,p,q,\delta,\|V\|_{M^q(p;\om_{k+1})})\|v_iu\|^p_{L^p(\om_k)}.
 \end{align}
Without loss of generality we assume that $\om$ contains $x_0$. Picking $\om'\Subset\Om$ such that $\om\subset\om'$, we find $k\geq 1$ such that $\om'\subset\om_k$. Next we chose $\delta<(1-\e)2^{1-p}\theta^{2p}_{\om_k}$ and specialize $u\in\test(\om_k)$ such that
\be\label{test:function}
  \sprt\{u\}\subset\om',\quad0\leq u\leq1\mbox{ in }\om',\quad u=1\mbox{ in }\om\quad\mbox{ and }\quad|\nabla u|\leq1/\dist(\om',\om)\mbox{ in }\om.
\ee
Applying this to the Caccioppoli inequality \eqref{Cacc}, and using the fact that $\{v_i\}_{i\in\N}$ is bounded in the $L^\infty(\om)$-norm uniformly in $i$ (due to the local Harnack's inequality \eqref{Harnack}), we conclude
 \be\nonumber
  \|\nabla v_i\|^p_{L^p(\om;\Rn)}
   +
    \|v_i\|^p_{L^p(\om)}
     \leq
      C\big(n,p,q,\e,\delta,\dist(\om',\om),\theta_{\om_k},\|\V\|_{M^q(p;\om_{k+1})}\big)
       \qquad \mbox{for all }i\geq k.
 \ee
So $\{v_i\}_{i\in\N}$ is bounded in the $W^{1,p}(\om)$. By weak compactness of $W^{1,p}(\om)$, there exists a subsequence, still denoted by $\{v_i\}_{i\in\N}$, that converges weakly in $W^{1,p}(\om)$ to a nonnegative function $v$ with $v(x_0)=1$.

Next we show that $v$ is a solution of $Q'_{A,p,\V}[u]=0$ in $\tilde{\om}\Subset\om$ such that $x_0\in\tilde{\om}$. First note that for a subsequence (that once more we do not rename)  we have $v_i\rightarrow v$ a.e. in $\om$ and in $L^p(\om)$. For the potential term of the equation we note first that (up to a subsequence) $\V_i\rightarrow\V$ a.e. in $\om$. Thus, $\V_iv_i^{p-1}\rightarrow\V v^{p-1}$ a.e. in $\om$, while $|\V_iv_i^{p-1}|\leq c|\V|$ a.e. in $\om,$ where $c$ is independent of $i$. Since $|\V|\in M^q_{\rm loc}(p;\Om)\subset L^1_{\rm loc}(\Om)$ we may apply the dominated convergence theorem to get
\be\label{weak:conv:perturb}
  \int_\om\V_iv_i^{p-1}u\dx
   \rightarrow
    \int_\om\V v^{p-1}u\dx\qquad \mbox{for all }u\in\test(\om).
 \ee
It remains to prove that
\be\label{weak:conv:principal}
 \xi_i:=|\nabla v_i|^{p-2}_AA\nabla v_i\underset{i \to \infty}{\rightharpoonup} |\nabla v|^{p-2}_AA\nabla v=:\xi\qquad\mbox{in }L^{p'}(\tilde{\om};\Rn).
\ee
To this end, letting $u$ be as in (\ref{test:function}) but with $\om,\om'$ replaced by $\tilde{\om},\om$ respectively, we take $u(v_i-v)$ as a test function in (\ref{Q'=0_i}), to obtain
 \be\label{in9}
  \int_{\om}u\xi_i\cdot\nabla(v_i-v)\dx
   =
    -\int_{\om}(v_i-v)\xi_i\nabla u\dx
     -\int_{\om}\V_iv_i^{p-1}u(v_i-v)\dx.
 \ee
We claim that
\begin{equation}\label{eq1}
  \int_{\om}u\xi_i\cdot\nabla(v_i-v)\dx\underset{i \to \infty}{\longrightarrow}0.
\end{equation}

Indeed, by an argument similar to the one leading to (\ref{weak:conv:perturb}), the second integral on the right of \eqref{in9} converges to $0$ as $i\to\infty.$ For the first one, apply Holder's inequality to get
\bea\nonumber & &
 \Big|-\int_{\om}(v_i-v)\xi_i\nabla u\dx\Big|
  \leq
   \theta_\om^{p/p'}\|(v_i-v)\nabla u\|_{L^p(\om;\Rn)}\|\nabla v_i\|^{p/p'}_{L^p(\om;\Rn)}
    \\ \nonumber & & \hspace{10.3em} \leq
     C\big(p,\theta_\om,\dist(\tilde{\om},\om)\big)\|v_i-v\|_{L^p(\om)}\|\nabla v_i\|^{p/p'}_{L^p(\om;\Rn)},
\eea
which also converges to $0$ as $i\to\infty$ since $\|\nabla v_i\|_{L^p(\om;\Rn)}$ are uniformly bounded and $v_i\rightarrow v$ in $L^p(\om).$

Notice that as in the case where $A=I_n$, we have for any $X,~Y\in\Rn;~n\geq1$,
 \bea\nonumber & &
  \big(|X|^{p-2}_AAX-|Y|^{p-2}_AAY\big)\cdot(X-Y)
   =
    |X|^p_A-|X|^{p-2}_AAX\cdot Y+|Y|^p_A-|Y|^{p-2}_AAY\cdot X
     \\ \nonumber & & \hspace{15.8em} \geq
      |X|^p_A-|X|^{p-1}_A|Y|_A+|Y|^p_A-|Y|^{p-1}_A|X|_A
       \\ \nonumber & & \hspace{15.8em} =
        \big(|X|^{p-1}_A-|Y|^{p-1}_A\big)(|X|_A-|Y|_A)
         \\ \label{cauchy-schwartz} & & \hspace{15.8em} \geq
          0
 \eea
The above considerations imply that
\be\nonumber
 0\leq
  \mathcal{I}_i
   :=
    \int_{\tilde{\om}}(\xi_i-\xi)\cdot\nabla(v_i-v)\dx      \leq
      \int_{\om}u(\xi_i-\xi)\cdot\nabla(v_i-v)\dx \underset{i \to \infty}{\longrightarrow}
        0,
\ee
where we have used \eqref{eq1} and the weak convergence in $L^{p'}(\om;\Rn)$ of $\nabla v_i$ to $\nabla v$. Thus $\lim_{i\to\infty}\mathcal{I}_i=0$ and invoking a celebrated Lemma of Maz'ya \cite{Mz} (see also Lemma 3.73 of \cite{HKM}), \eqref{weak:conv:principal} follows.

\medskip

Hence, using Harnack's inequality, we have that $v$ is a positive weak solution of $Q'_{A,p,\V}[u]=0$ in $\tilde{\om}$ with $v(x_0)=1$. We now use a standard Harnack chain argument and a diagonalization procedure to obtain a new subsequence (once again not renamed) $\{v_i\}_{i\in\N}$, such that $v_i\rightharpoonup v$ in $W_{\rm loc}^{1,p}(\Om)$ (and locally uniformly in $\Om$), where $v$ is a positive weak solution of $Q'_{A,p,\V}[u]=0$ in $\Om$.
\end{proof}
\section{Principal eigenvalue and the maximum principle}\label{sec:eigenvalue}
Throughout the present section we fix a bounded domain $\om$ in $\Rn$, and suppose that $A$ is a uniformly elliptic, bounded matrix in $\om$, and $V\in M^q(p;\om)$. We consider in $\om$ the operator $Q'_{A,p,V}$ defined in (\ref{Q'=0}), and for $u\in\test(\om)$ we denote \[Q_{A,p,V}[u;\om]:= \int_\om\Big(|\nabla u|^{p}_A+V(x)|u|^p\Big)\dx.\]
 \begin{definition}
We say that $\lambda\in\R$ is an {\it eigenvalue with an eigenfunction} $v$ of the Dirichlet eigenvalue problem
 \be\label{eigenproblem}
  \left\{
    \begin{array}{ll}
      Q'_{A,p,V}[w]=\lambda|w|^{p-2}w & \mbox{in }\om, \\
       w=0 & \mbox{ on }\partial\om,
    \end{array}
  \right.
     \ee
if $v\in\W(\om)\setminus\{0\}$ satisfies
 \be\label{eigendef}
  \int_\om|\nabla v|_A^{p-2}A\nabla v\cdot\nabla u\dx
   +
    \int_\om V|v|^{p-2}vu\dx
     =
      \lambda\int_\om|v|^{p-2}vu\dx
       \qquad \mbox{for all }u\in\test(\om).
 \ee
\end{definition}
\begin{definition}
A {\it principal eigenvalue} is an eigenvalue of \eqref {eigenproblem} with a nonnegative eigenfunction.
\end{definition}

\noindent The existence of a principal eigenvalue for the problem (\ref{eigenproblem}), and its variational characterization by the Rayleigh-Ritz variational formula
 \be\label{first:eigenvalue}
  \lambda_1=\lambda_1(Q_{A,p,V};\om)
    :=\inf_{u\in \W(\om)\setminus\{0\}}\dfrac{Q_{A,p,V}[u;\om]}{\|u\|^p_{L^p(\om)}}\,,
 \ee
is established in Proposition~\ref{proposition:principal} below.

\medskip

Consider first the equation
 \be\label{Q'=f:om}
  Q'_{A,p,V}[v]=g \quad \mbox{in } \omega, \qquad \mbox{where }g\in M^q(p;\om)\mbox{ is nonnegative}.
 \ee
By a (sub, super)solution of (\ref{Q'=f:om}) we mean a function $v\in W^{1,p}_{\mathrm{loc}}(\om)$ such that
 \bea\nonumber
  \int_\om\!\!|\nabla v|_A^{p-2}A\nabla v\!\cdot\!\nabla u\!\dx
   +
    \int_\om \!\!V|v|^{p-2}vu\!\dx
     (\leq,~\geq)\!=\!
       \int_\om\!\! gu\!\dx
        \quad \mbox{for all (nonnegative) }u\!\in\!\test(\om).
 \eea
One of our targets in the following subsection is to characterize in terms of the strict positivity of the principal eigenvalue of problem (\ref{eigenproblem}), the following properties
\begin{itemize}
  \item[a)] the solvability in $\W(\om)$ of (\ref{Q'=f:om}),
  \item[b)] the (generalized) weak maximum principle for (\ref{Q'=f:om}),
  \item[c)] the strong maximum principle for (\ref{Q'=f:om}).
\end{itemize}
Recall at this point that the {\it (generalized) weak maximum principle} for the operator $ Q'_{A,p,V}$ asserts that a solution of the equation \eqref{Q'=f:om} which is nonnegative on $\partial\om$ is nonnegative in $\om$, while the {\it strong maximum principle} asserts that in addition to the weak maximum principle, a solution of (\ref{Q'=f:om}) which is nonnegative on $\partial\om$, is either identically zero or strictly positive in $\om$.
\subsection{Preparatory material}\label{subsec:prelims:eigenvalue}
We start with the following technical lemma that generalizes computations found in \cite{An,DS,Ln}, where the case $V_1=V_2\equiv0$ and $A=I_n$ is considered. This useful lemma replaces Picone's identity which is a key tool in \cite{PT1,PR}.  We note that in the present paper the lemma is used only for the case $V_1=V_2$, but this assumption does not affect at all the volume of computations of the general case.

\begin{lemma}\label{lemma:lind} Let $g_i,~V_i\in M^q(p;\om)$, where  $i=1,2$. There exists a positive constant $c_p$, depending only on $p$  such that the following assertions holds true:

(i)  Suppose that $w_1,~w_2\in\W(\om)\setminus\{0\}$ are nonnegative solutions of
 \be\label{solutions:equations}
  Q'_{A,p,V_1}[w;\om]=g_1,\qquad\mbox{ and }\qquad Q'_{A,p,V_2}[w;\om]=g_2,
 \ee
respectively, and let $w_{i,h}:=w_i+h$, where $h$ is a positive constant, and $i=1,2$. Then
\bea\nonumber & &
 \hspace{-2em}I_h:=\int_\om\Big(\frac{g_1-V_1w^{p-1}_1}{{w^{p-1}_{1,h}}}-\frac{g_2-V_2w_2^{p-1}}{{w^{p-1}_{2,h}}}\Big)(w^p_{1,h}-w^p_{2,h})\dx
  \\ \label{Ln} & &   \geq
   c_p\left\{
     \begin{array}{ll}
      \!\!\displaystyle{\int_\om(w^p_{1,h}+w^p_{2,h})\Big|\nabla\log\frac{w_{1,h}}{w_{2,h}}\Big|_A^p\dx} & \mbox{if }p\geq2,
       \\[6mm]
      \!\!\displaystyle{\int_\om \!\!(w^p_{1,h}\!+\!w^p_{2,h})\Big|\nabla\log\frac{w_{1,h}}{w_{2,h}}\Big|_A^2\big(|\nabla\log w_{1,h}|_A\!+\!|\nabla\log w_{2,h}|_A\big)^{p-2}\!\dx} & \mbox{if }p<2.
     \end{array}\right.
\eea

(ii) In the particular case of nonnegative eigenfunctions, i.e.,
$$w_1:=w_\lambda,\quad w_2:= w_\mu, \quad g_1:=\lambda|w_\lambda|^{p-2}w_\lambda, \quad g_2=\mu|w_\mu|^{p-2}w_\mu,$$
with  $\lambda,~\mu\in\R$, we have
\bea\nonumber & &
 \hspace{-2em}\int_\om\Big((\lambda-\mu)-(V_1-V_2)\Big)(w^p_\lambda-w^p_\mu)\dx
  \\ \nonumber & & \hspace{2em} \geq
   c_p\left\{
     \begin{array}{ll}
      \displaystyle{\int_\om(w^p_\lambda+w^p_\mu)\Big|\nabla\log\frac{w_\lambda}{w_\mu}\Big|_A^p\dx} & \mbox{if }p\geq2,
       \\[6mm]
      \displaystyle{\int_\om(w^p_\lambda+w^p_\mu)\Big|\nabla\log\frac{w_\lambda}{w_\mu}\Big|_A^2\big(|\nabla\log w_\lambda|_A+|\nabla\log w_\mu|_A\big)^{p-2}\dx} & \mbox{if }p<2.
     \end{array}\right.
\eea

(iii) Suppose further that $\om$ is Lipschitz, and let  $w_1,~w_2\in W^{1,p}(\om)$ be positive solutions of (\ref{solutions:equations}) respectively, such that $w_1=w_2>0$ on $\partial\om$, in the trace sense. Then
 \bea\nonumber & &
  \hspace{-2em}\int_\om\Big(\frac{g_1}{w^{p-1}_1}-\frac{g_2}{w^{p-1}_2}-(V_1-V_2)\Big)(w^p_1-w^p_2)\dx
   \\ \nonumber & & \geq
    c_p\left\{
     \begin{array}{ll}
      \displaystyle{\int_\om(w^p_1+w^p_2)\Big|\nabla\log\frac{w_1}{w_2}\Big|_A^p\dx} & \mbox{if }p\geq2,
       \\[6mm]
      \displaystyle{\int_\om(w^p_1+w^p_2)\Big|\nabla\log\frac{w_1}{w_2}\Big|_A^2\big(|\nabla\log w_1|_A+|\nabla\log w_2|_A\big)^{p-2}\dx} & \mbox{if }p<2.
     \end{array}\right.
 \eea
\end{lemma}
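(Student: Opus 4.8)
The plan is to prove all three parts simultaneously by establishing a single pointwise (in fact, integrand-level after integration by parts) algebraic inequality, following the spirit of Lindqvist \cite{Ln}, Anane \cite{An} and Diaz--Saa \cite{DS}, but with the matrix $A$ inserted throughout. First I would note that in all three cases the left-hand side, call it $I$ or $I_h$, can be rewritten using the weak formulations of the two equations in \eqref{solutions:equations}. The key observation is that the test functions
\[
 \varphi_1:=\frac{w^p_{1,h}-w^p_{2,h}}{w^{p-1}_{1,h}},\qquad
 \varphi_2:=\frac{w^p_{2,h}-w^p_{1,h}}{w^{p-1}_{2,h}}
\]
(respectively their analogues without the shift $h$ in parts (ii) and (iii)) belong to $\W(\om)$ — in part (i) this is where the regularization $w_{i,h}=w_i+h\ge h>0$ is used to keep the denominators bounded away from $0$, and in part (iii) the hypothesis $w_i>0$ on $\partial\om$ together with Harnack gives $w_i\ge c>0$ on $\om$, so the same works; in part (ii) one applies part (i) with $g_i$ the eigenfunction right-hand sides and then lets $h\to0$ using monotone/dominated convergence. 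Plugging $u=\varphi_i$ into \eqref{eigendef}-type identities for $w_1$ and $w_2$ and adding, the potential and source terms reorganize exactly into $I_h$ minus the ``gradient part'', so that $I_h$ equals a single integral of the form
\[
 \int_\om \Bigl[\bigl(|\nabla w_{1,h}|^{p-2}_AA\nabla w_{1,h}\bigr)\cdot\nabla\varphi_1
   +\bigl(|\nabla w_{2,h}|^{p-2}_AA\nabla w_{2,h}\bigr)\cdot\nabla\varphi_2\Bigr]\dx .
\]

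Second, I would compute $\nabla\varphi_i$ by the quotient/product rule. Writing $a:=w_{1,h}$, $b:=w_{2,h}$, $X:=\nabla a$, $Y:=\nabla b$, a direct computation shows the integrand above collapses, after cancellation, to an expression built only from $a,b$ and the $A$-quantities $|X|_A$, $|Y|_A$, $AX\cdot Y$, namely (up to the structure familiar from the scalar case)
\[
 \Bigl(a^p+b^p\Bigr)\Bigl(\bigl|\nabla\log a\bigr|^p_A+\bigl|\nabla\log b\bigr|^p_A\Bigr)
  -\bigl(a^p+b^p\bigr)\Bigl(\bigl|\nabla\log a\bigr|^{p-2}_A A\nabla\log a + \bigl|\nabla\log b\bigr|^{p-2}_A A\nabla\log b\Bigr)\cdot\bigl(\ldots\bigr),
\]
the point being that everything is expressible through $\nabla\log a$ and $\nabla\log b$ and the matrix $A$ at the same point $x$. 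This reduces the whole lemma to a pointwise vector inequality: for all $\xi,\eta\in\Rn$ and any symmetric positive definite matrix $A=A(x)$,
\[
 \bigl(|\xi|^{p-2}_A A\xi-|\eta|^{p-2}_A A\eta\bigr)\cdot(\xi-\eta)
  \;\ge\; c_p\times
   \begin{cases}
    \bigl|\xi-\eta\bigr|^p_A & p\ge2,\\[2mm]
    \bigl|\xi-\eta\bigr|^2_A\bigl(|\xi|_A+|\eta|_A\bigr)^{p-2} & 1<p<2,
   \end{cases}
\]
applied with $\xi=\nabla\log a$, $\eta=\nabla\log b$ (times the positive weight $a^p+b^p$, or rather a symmetrized combination of the two weights). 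Since $|\cdot|_A=\sqrt{A\cdot\,}$ is genuinely a Euclidean norm coming from the inner product $\langle u,v\rangle_A:=Au\cdot v$, this inequality is exactly the classical monotonicity/strong-monotonicity inequality for the $p$-Laplacian written in the $A$-inner product, which holds with the same constant $c_p$ as in the Euclidean case (this is essentially what the chain \eqref{cauchy-schwartz} already records for the mere nonnegativity; the quantitative version with $c_p$ is standard, see e.g. \cite{Ln} or \cite{HKM}). Absorbing the weight and summing gives the claimed lower bounds.

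Third, for the bookkeeping: in part (i) one expands
$w^p_{1,h}-w^p_{2,h} = w^p_{1,h}-w^{p}_{2,h}$ and writes $\varphi_1+\varphi_2$ in terms of $(w^p_{1,h}-w^p_{2,h})(w^{1-p}_{1,h}-w^{1-p}_{2,h})$ and the difference of gradient terms; the left side $I_h$ is precisely $\sum_i\int_\om (g_i-V_iw_i^{p-1})w_{i,h}^{1-p}(w^p_{1,h}-w^p_{2,h})(\pm1)\dx$, which matches the definition in \eqref{Ln}. Part (ii) follows by specializing $g_i$ to $\lambda|w_\lambda|^{p-2}w_\lambda$, $\mu|w_\mu|^{p-2}w_\mu$, dividing $g_1/w_{1,h}^{p-1}$ etc., and letting $h\downarrow0$: the right-hand sides converge by monotone convergence (the integrands increase as $h\downarrow0$, or one uses Fatou), and the left-hand side converges because $w^{1-p}_{i,h}g_i\to \lambda$ or $\mu$ boundedly. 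Part (iii) is part (i) with $h=0$, legitimate because $w_i\in W^{1,p}(\om)$ are positive and, being solutions, are bounded below on $\overline{\om}$ by Harnack plus the positive trace hypothesis, so $\varphi_i\in W^{1,p}(\om)$ with zero trace (since $w_1=w_2$ on $\partial\om$ forces $\varphi_i=0$ there) — hence they are admissible test functions.

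The main obstacle I anticipate is not the pointwise inequality (which is classical) but the \emph{justification of the integrations by parts / admissibility of the test functions} in the low-regularity Morrey setting: one must check $\varphi_i\in\W(\om)$ (or $W^{1,p}(\om)$ with the right trace) and that all the integrals converge, which requires the Morrey--Adams theorem (Theorem~\ref{theorem:uncertainty}) to control the $\int V|v|^{p-2}v\varphi$ terms, the local boundedness from Harnack (Theorem~\ref{theorem:harnack}) to control $w_i^{1-p}$, and — in the $p<2$ case, where the integrand on the right of \eqref{Ln} involves $|\nabla\log w_{i,h}|_A^{p-2}$ with a negative power — a careful argument (or an extra density/approximation step) to make sense of the gradient expressions on the set where $\nabla w_i$ may vanish. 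Organizing these technicalities so that the single algebraic inequality can be applied cleanly, uniformly in $h$, is the delicate part; the rest is the calculus identity $\nabla\log w_{i,h}=\nabla w_{i,h}/w_{i,h}$ carried through the $A$-bilinear form.
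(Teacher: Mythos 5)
Your overall strategy matches the paper's: use the test functions $\psi_{i,h}=(w^p_{1,h}-w^p_{2,h})w^{1-p}_{i,h}$ (with the sign flip for $i=2$) in the two weak formulations, add, and reduce to a pointwise algebraic inequality; handle (ii) by letting $h\downarrow 0$ with Fatou/dominated convergence, and (iii) by setting $h=0$ directly. That is exactly the paper's plan. However, there is a concrete misstep in the algebraic reduction.

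You claim the test-function computation collapses the gradient terms into something of the form $(a^p+b^p)$ times a single quantity, and you then reduce to the \emph{strong monotonicity} inequality
$\bigl(|\xi|^{p-2}_AA\xi-|\eta|^{p-2}_AA\eta\bigr)\cdot(\xi-\eta)\ge c_p(\cdots)$.
That is not what the computation produces. If you set $a:=w_{1,h}$, $b:=w_{2,h}$, $\xi:=\nabla\log a$, $\eta:=\nabla\log b$ and compute $|\nabla a|^{p-2}_AA\nabla a\cdot\nabla\psi_{1,h}+|\nabla b|^{p-2}_AA\nabla b\cdot\nabla\psi_{2,h}$, you obtain
\begin{equation*}
a^p\Bigl(|\xi|^p_A+(p-1)|\eta|^p_A-p|\eta|^{p-2}_AA\eta\cdot\xi\Bigr)
\;+\;
b^p\Bigl(|\eta|^p_A+(p-1)|\xi|^p_A-p|\xi|^{p-2}_AA\xi\cdot\eta\Bigr),
\end{equation*}
i.e.\ a \emph{weighted} sum with \emph{distinct} weights $a^p$ and $b^p$ of the two Taylor--Lindqvist remainders $L(\xi,\eta)$ and $L(\eta,\xi)$, where $L(\alpha,\beta):=|\alpha|^p_A-|\beta|^p_A-p|\beta|^{p-2}_AA\beta\cdot(\alpha-\beta)$. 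Because the two weights differ, this expression cannot be rewritten as a single weight times the monotonicity form $(|\xi|^{p-2}_AA\xi-|\eta|^{p-2}_AA\eta)\cdot(\xi-\eta)$; the latter equals $L(\xi,\eta)+L(\eta,\xi)$ (up to the factor $p$) only when $a=b$. The inequality one needs to apply, twice and separately to each weighted bracket, is the \emph{convexity-remainder} (Lindqvist, Lemma~4.2 of \cite{Ln}, inequality \eqref{vecineqLindqvist} in this paper):
$L(\alpha,\beta)\ge C(p)\,|\alpha-\beta|^p_A$ for $p\ge2$ and $L(\alpha,\beta)\ge C(p)\,|\alpha-\beta|^2_A(|\alpha|_A+|\beta|_A)^{p-2}$ for $p<2$.
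Applying it with $(\alpha,\beta)=(\xi,\eta)$ to the $a^p$-bracket and with $(\alpha,\beta)=(\eta,\xi)$ to the $b^p$-bracket is what yields the factor $(w^p_{1,h}+w^p_{2,h})$ on the right. Your reduction to the strong monotonicity inequality is therefore a genuine gap: the inequality you name is the wrong one for the algebraic structure produced here, and the collapse to $(a^p+b^p)\times(\cdots)$ that you assert does not occur.

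Two smaller points: in part (ii) the integrands need not be monotone in $h$ (and $g_i$ involves $\lambda$ and $\mu$ which can have different signs), so one should not invoke monotone convergence; the paper's route is to bound the integrand uniformly for $0<h<1$ by $(|\lambda-V_1|+|\mu-V_2|)\bigl[(w_\lambda+1)^p+(w_\mu+1)^p\bigr]\in L^1(\om)$ (via Morrey--Adams) and then use dominated convergence together with Fatou on the right-hand side. And in part (iii) one does not need Harnack to lower-bound $w_i$: the paper simply sets $h=0$ in the identity from part (i), which is legitimate once the test functions $\psi_{i,0}$ are shown to lie in $W^{1,p}(\om)$ — this follows from the assumptions $w_1,w_2\in W^{1,p}(\om)$, positivity, and the trace condition, without a Harnack argument.
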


\begin{proof} Set $\psi_{1,h}:=(w^p_{1,h}-w^p_{2,h})w^{1-p}_{1,h}$. It is easily seen that $\psi_{1,h}\in\W(\om)$, and using it as a test function in the definition of $w_1$ being a solution of the first equation of (\ref{solutions:equations}), we get
 \bea\nonumber & & \hspace{-2em}
  \int_\om(w^p_{1,h}-w^p_{2,h})|\nabla(\log w_{1,h})|_A^p\dx
   -
    p\int_\om w^p_{2,h}|\nabla(\log w_{1,h})|_A^{p-2}A\nabla(\log w_{1,h})\cdot\nabla\Big(\log\frac{w_{2,h}}{w_{1,h}}\Big)\dx
     \\ \nonumber & & =
      \int_\om\frac{g_1-V_1w^{p-1}_1}{{w^{p-1}_{1,h}}}(w^p_{1,h}-w^p_{2,h})\dx.
 \eea
In the same fashion we set $\psi_{2,h}:=(w^p_{2,h}-w^p_{1,h})w^{1-p}_{2,h}$ and use it as a test function in the definition of $w_2$ being a solution of the second equation of (\ref{solutions:equations}), to obtain
 \bea\nonumber & & \hspace{-2em}
  \int_\om(w^p_{2,h}-w^p_{1,h})|\nabla(\log w_{2,h})|_A^p\dx
   -
    p\int_\om w^p_{1,h}|\nabla(\log w_{2,h})|_A^{p-2}A\nabla(\log w_{2,h})\cdot\nabla\Big(\log\frac{w_{1,h}}{w_{2,h}}\Big)\dx
     \\ \nonumber & & =
      \int_\om\frac{g_2-V_2w_2^{p-1}}{{w^{p-1}_{2,h}}}(w^p_{2,h}-w^p_{1,h})\dx.
 \eea
Adding these we arrive at
 \bea\nonumber & & \hspace{-2em}
  \int_\om w^p_{1,h}\Big[|\nabla(\log w_{1,h})|_A^p-|\nabla(\log w_{2,h})|_A^p-p|\nabla(\log w_{2,h})|_A^{p-2}A\nabla(\log w_{2,h})\cdot\nabla\Big(\log\frac{w_{1,h}}{w_{2,h}}\Big)\Big]\dx
   \\ \nonumber & & \hspace{-2em} +
    \int_\om w^p_{2,h}\Big[|\nabla(\log w_{2,h})|_A^p-|\nabla(\log w_{1,h})|_A^p-p|\nabla(\log w_{1,h})|_A^{p-2}A\nabla(\log w_{1,h})\cdot\nabla\Big(\log\frac{w_{2,h}}{w_{1,h}}\Big)\Big]\dx
     \\ \label{pre:vecineq} & & =
      I_h.
 \eea
Now we use the following inequality found in \cite[Lemma 4.2]{Ln} for $A$ being the identity matrix $I_n$, cf. \cite[(2.19)]{PTT} (the proof is essentially the same and we omit it): {\it for all vectors $\alpha,\beta\in\Rn$ and a.e. $x\in\om$, we have}
 \be\label{vecineqLindqvist}
  |\alpha|_A^p-|\beta|_A^p-p|\beta|_A^{p-2}A(x)\beta\cdot(\alpha-\beta)
  \geq
   C(p)\Bigg\{
     \begin{array}{ll}
      |\alpha-\beta|_A^p & \mbox{if }p\geq2,
       \\[8pt]
      |\alpha-\beta|_A^2(|\alpha|_A+|\beta|_A)^{p-2} & \mbox{if }p<2.
     \end{array}
 \ee
Applying this to both terms of the left hand side of (\ref{pre:vecineq}), we obtain the inequality of part ($i$).

To prove part ($ii$), take $g_1=\lambda|w_1|^{p-2}w_1,~g_2=\mu|w_2|^{p-2}w_2$ for some $\lambda,~\mu\in\R$, and rename $w_1,~w_2$ to $w_\lambda,~w_\mu$ respectively. The integrand of $I_h$ in this case satisfies for all $0<h<1$
 \bea\nonumber & & \hspace{-2em}
  \Big|\Big[(\lambda-V_1)\Big(\frac{w_\lambda}{w_{\lambda,h}}\Big)^{p-1}-(\mu-V_2)\Big(\frac{w_\mu}{w_{\mu,h}}\Big)^{p-1}\Big](w^p_{\lambda,h}-w^p_{\mu,h})\Big|
   \\ \nonumber & & \leq
    (|\lambda-V_1|+|\mu-V_2|)[(w_\lambda+1)^p+(w_\mu+1)^p]\in L^1(\om),
 \eea
by Theorem~\ref{theorem:uncertainty}-($i$). As $h\to 0$,  we have
$$\Big[(\lambda-V_1)\Big(\frac{w_\lambda}{w_{\lambda,h}}\Big)^{p-1}-(\mu-V_2)\Big(\frac{w_\mu}{w_{\mu,h}}\Big)^{p-1}\Big](w^p_{\lambda,h}-w^p_{\mu,h}) \to (\lambda-\mu-V_1+V_2)(w^p_\lambda-w^p_\mu)$$ a.e. in $\om$.  By applying the dominated convergence theorem and the Fatou lemma on the inequality of part ($i$), we get the desired estimate. Part ($iii$) follows from part ($i$) by setting $h=0$.
\end{proof}

We modify to our case a well known lemma on the negative part of a supersolution (see for example, \cite[Lemma 2.7]{Agm}, or \cite[Lemma 2.4]{PTT}).

\begin{lemma}\label{lemma:Kato}
Let $\V\in M_{\rm loc}^q(p;\Om).$ If $v\in\Wloc(\Om)$ is a supersolution of $Q'_{A,p,\V}[u]=0$ in $\Om,$ then $v^-$ is a $\Wloc(\Om)$ subsolution of the same equation.
\end{lemma}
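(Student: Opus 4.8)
The plan is to reduce the statement to a test-function computation against the definition of supersolution. Let $v\in\Wloc(\Om)$ be a supersolution of $Q'_{A,p,\V}[u]=0$ in $\Om$, so that
\be\nonumber
 \int_\Om|\nabla v|_A^{p-2}A\nabla v\cdot\nabla\varphi\dx+\int_\Om\V|v|^{p-2}v\varphi\dx\geq0
  \qquad\mbox{for all nonnegative }\varphi\in\test(\Om).
\ee
Set $w:=v^-=-\min\{v,0\}\in\Wloc(\Om)$ (Stampacchia's theorem gives $w\in\Wloc$ with $\nabla w=-\chi_{\{v<0\}}\nabla v$). We want to show $w$ is a subsolution, i.e. $Q'_{A,p,\V}[w]\leq0$ in the weak sense, equivalently $-Q'_{A,p,\V}[w]\geq0$ tested against nonnegative $\varphi$. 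The natural choice is to plug a multiple of $\varphi$ supported where $v<0$ into the supersolution inequality; but since $\varphi\in\test$ need not vanish on $\{v\geq0\}$, one must first regularize by replacing $v$ with $v-\e$ (or using $\min\{v,-\e\}$-type truncations) and then let $\e\to0^+$.

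The key steps, in order, are as follows. First I would fix a nonnegative $\varphi\in\test(\Om)$ and, for $\e>0$, consider the test function $\varphi_\e:=\varphi\,\eta_\e(v)$ where $\eta_\e$ is a smooth nonincreasing cutoff with $\eta_\e=1$ on $(-\infty,-\e]$ and $\eta_\e=0$ on $[0,\infty)$, so $\varphi_\e\geq0$ and $\varphi_\e\in\wtest(\Om)$ (permissible by the density remark following Definition~\ref{def.weak:sol:Q'=0}). Substituting $\varphi_\e$ into the supersolution inequality and using $\nabla\varphi_\e=\eta_\e(v)\nabla\varphi+\varphi\,\eta_\e'(v)\nabla v$ together with $|\nabla v|_A^{p-2}A\nabla v\cdot\nabla v=|\nabla v|_A^p\geq0$ and $\eta_\e'\leq0$, the gradient term splits into $\int\eta_\e(v)|\nabla v|_A^{p-2}A\nabla v\cdot\nabla\varphi\dx$ plus a nonpositive term; discarding the latter preserves the inequality in the right direction. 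Second, I would pass to the limit $\e\to0^+$: since $\eta_\e(v)\to\chi_{\{v<0\}}$ pointwise and boundedly, dominated convergence (using $|\nabla v|_A^{p-2}A\nabla v\in L^{p'}_{\rm loc}$ by (E), and $\V|v|^{p-2}v\varphi\in L^1_{\rm loc}$ via the Morrey--Adams theorem, Theorem~\ref{theorem:uncertainty}) yields
\be\nonumber
 \int_{\{v<0\}}|\nabla v|_A^{p-2}A\nabla v\cdot\nabla\varphi\dx
  +\int_{\{v<0\}}\V|v|^{p-2}v\varphi\dx\geq0.
\ee
Third, I would rewrite this using $\nabla w=-\chi_{\{v<0\}}\nabla v$, $w=-v$ and $|w|^{p-2}w=-|v|^{p-2}v$ on $\{v<0\}$, so $|\nabla w|_A^{p-2}A\nabla w=-\chi_{\{v<0\}}|\nabla v|_A^{p-2}A\nabla v$ a.e.; the displayed inequality becomes $-\int|\nabla w|_A^{p-2}A\nabla w\cdot\nabla\varphi\dx-\int\V|w|^{p-2}w\varphi\dx\geq0$, which is exactly the statement that $w=v^-$ is a subsolution.

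The main obstacle is the limiting argument and the legitimacy of the intermediate test functions: $\varphi\eta_\e(v)$ lies only in $\wtest$ rather than $\test$, so one needs the density extension of the definition (noted in the remark after Definition~\ref{def.weak:sol:Q'=0}), and one must justify dropping the $\varphi\eta_\e'(v)\nabla v$ term — which is fine because its contribution to the gradient part is $\int\varphi\eta_\e'(v)|\nabla v|_A^p\dx\leq0$, but this requires knowing $|\nabla v|_A^p$ is locally integrable, i.e. $v\in\Wloc$, which is given. A cleaner alternative avoiding $\eta_\e$ is to test directly with $\varphi_\e:=\varphi\cdot\dfrac{(v-\e)^-}{(v-\e)^-+\e}$ or simply note that on the set $\{v<-\e\}$ one may take $\varphi\in\test$ with $\sprt\varphi\subset\{v<-\e\}$ after establishing this set is open up to a null set via the local continuity of $v$ (Remark~\ref{remark:regularity}); indeed, under (H0) solutions — hence we may as well use that supersolutions that are continuous, or work with the lower-semicontinuous representative — making $\{v<-\e\}$ genuinely open and the cutoff construction transparent. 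Either way the remaining computations are the routine ones indicated above, and the conclusion $v^-\in\Wloc(\Om)$ being a subsolution follows.
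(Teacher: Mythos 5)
Your argument is correct, but it takes a genuinely different route from the paper's. The paper follows Agmon's classical regularization: it sets $v_\e:=(v^2+\e^2)^{1/2}$, tests with $\varphi_\e:=\frac{v_\e-v}{2v_\e}\varphi$, establishes the pointwise inequality $\tfrac12\nabla(v_\e-v)\cdot\nabla\varphi\leq -\nabla v_\e\cdot\nabla\varphi_\e$, and then passes to the limit using dominated convergence and Morrey--Adams. You instead test with $\varphi\,\eta_\e(v)$ for a smooth nonincreasing cutoff $\eta_\e$ supported in $(-\infty,0)$, expand $\nabla(\varphi\eta_\e(v))=\eta_\e(v)\nabla\varphi+\varphi\eta_\e'(v)\nabla v$, and discard the nonpositive term $\int\varphi\,\eta_\e'(v)|\nabla v|_A^p\dx$ before sending $\e\to0$. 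Both arguments rest on the same two pillars — the density remark permitting $\wtest$ test functions, and the Morrey--Adams theorem to dominate the potential term in the passage to the limit — so they are of essentially equal weight. Your cutoff approach is somewhat more transparent in making the sign of the discarded term, and therefore the direction of the inequality, immediately visible; Agmon's $v_\e$-regularization has the virtue of simultaneously smoothing $|v|$ and $v^-$ and is the device used verbatim in the linear precedent the authors cite. One small remark: your closing alternative that invokes local continuity of $v$ is unnecessary (and the Harnack-based regularity in the paper is stated for solutions, not arbitrary supersolutions), but as you note, the main $\eta_\e$ argument does not need it, so the proof stands on its own.
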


\begin{proof}
Though this argument is quite standard, we add it for completeness, and since it requires the use of the Morrey-Adams theorem in the final limit argument. Following the steps of the proof in \cite{Agm}, we define
\[
\varphi_\e:=\frac{v_\e-v}{2v_\e}\varphi\qquad\mbox{and}\qquad v_\e:=(v^2+\e^2)^{1/2},
\]
with $\varphi$ being an arbitrary nonnegative function in $\test(\Om)$. It is straightforward to see that
\[
\nabla v_\e\cdot\nabla\varphi\leq\nabla v\cdot\nabla\Big(\frac{v}{v_\e}\varphi\Big)\qquad\mbox{a.e. in }\Om,
\]
and then
\be\label{varphi}
\frac{1}{2}\nabla(v_\e-v)\cdot\nabla\varphi\leq-\nabla v_\e\cdot\nabla\varphi_\e\qquad\mbox{a.e. in }\Om.
\ee
Thus, taking $\varphi_\e\in W_c^{1,p}(\Om)$ as a test function in the definition of $v\in\Wloc(\Om)$ being a supersolution of $Q'_{A,p,\V}[u]=0$ in $\Om,$ and then applying (\ref{varphi}), we conclude that we only need to show that we can take the limit $\e\to0,$ in the following expression
\be\label{kato:agmon}
\frac{1}{2}\int_\Om|\nabla v|_A^{p-2}A\nabla\big(v_\e-v\big)\cdot\nabla\varphi\dx
 -
  \int_\Om\V|v|^{p-2}v\varphi_\e\dx
   \leq0.
\ee
Note that since $\nabla\big(v_\e-v\big)/2\rightarrow \nabla v^-$, and $v\varphi_\e\rightarrow-v^-\varphi$ as $\e\to0$, this would readily give
\[
\int_\Om|\nabla v^-|_A^{p-2}A\nabla v^-\cdot\nabla\varphi\dx
 +
  \int_\Om\V|v^-|^{p-2}v^-\varphi\dx
   \leq0,\qquad \mbox{for all nonnegative }\varphi\in\test(\Om).
\]
However, the justification of taking the limit inside both integrals in (\ref{kato:agmon}) is verified by the dominated convergence theorem. For the first one we use H\"{o}lder's inequality, while for the second we apply first H\"{o}lder's inequality and then the Morrey-Adams theorem.
\end{proof}

\begin{definition}
Let $(X, \|\cdot\|_X)$ be a Banach space. A functional $J:X\rightarrow\R\cup\{\infty\}$ is said to be {\em coercive} if $J[u]\to \infty$ as $\|u\|_X\to\infty$.  The functional $J$ is said to be {\it(sequentially) weakly lower semicontinuous} if $J[u]\leq\displaystyle{\liminf_{k\to\infty}}J[u_k]$ whenever $u_k\rightharpoonup u$.
\end{definition}
We have
\begin{proposition}\label{proposition:wlsc}
(a) Let $\om\Subset\Rn$, $\V\in M^q(p;\om)$ and  $\G\in L^{p'}(\om)$. Define the functional $J:\W(\om)\to\R\cup\{\infty\}$ by
\be\label{funct:J}
 J[u]
  :=
   Q_{A,p,\V}[u;\om]-\int_\om\G u\dx.
\ee
Then $J$ is weakly lower semicontinuous in $\W(\om)$.

\medskip

(b) Let $\om\Subset \om'\Subset \R^n$ with $\om$ being Lipschitz, and let  $\G,~\V\in M^q(p;\om')$. Define the functional $\bar{J}:W^{1,p}(\om)\to\R\cup\{\infty\}$ by
\be\label{funct:barJ}
 \bar{J}[u]
  :=
   Q_{A,p,\V}[u;\om]-\int_\om\G|u|\dx.
 \ee
Then $\bar{J}$ is weakly lower semicontinuous in $W^{1,p}(\om)$.
\end{proposition}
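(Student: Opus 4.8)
The plan is to handle the two parts separately, but in both cases the core difficulty is the same: the functional $Q_{A,p,V}[u;\om]$ contains the indefinite term $\int_\om V|u|^p\dx$, which is only $L^1$-bounded via the Morrey--Adams theorem rather than being manifestly lower semicontinuous. For part (a), I would first split $J=J_1+J_2$ where $J_1[u]:=\int_\om|\nabla u|_A^p\dx$ and $J_2[u]:=\int_\om V|u|^p\dx-\int_\om\G u\dx$. The term $J_1$ is weakly lower semicontinuous in $\W(\om)$ because $u\mapsto\int_\om|\nabla u|_A^p\dx$ is convex (the integrand $\xi\mapsto|\xi|_A^p$ is convex by (E) and convexity of $t\mapsto t^p$) and strongly continuous, hence weakly l.s.c. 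For $J_2$, the key observation is that if $u_k\rightharpoonup u$ in $\W(\om)$, then by the Rellich--Kondrachov theorem $u_k\to u$ strongly in $L^p(\om)$ (since $\om$ is bounded; when $\om$ is not Lipschitz one uses that $\W(\om)=\wdtest(\om)$ embeds compactly into $L^p$ via extension by zero to a ball). Then $\int_\om\G u_k\dx\to\int_\om\G u\dx$ by H\"older, and for the potential term I would show $\int_\om V|u_k|^p\dx\to\int_\om V|u|^p\dx$: passing to a subsequence with $u_k\to u$ a.e.\ and dominated in $L^p$, write $|V|\,|u_k|^p$ and use the Morrey--Adams inequality \eqref{ineq:uncertainty} applied to $u_k-u$ with a small $\delta$ to get an equi-integrability-type bound, concluding by a Vitali/generalized dominated convergence argument. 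Hence $J_2$ is in fact weakly \emph{continuous} along the subsequence, and since every subsequence has a further subsequence along which $J_2[u_k]\to J_2[u]$, the full sequence converges; combined with $\liminf J_1[u_k]\ge J_1[u]$ this gives $\liminf J[u_k]\ge J[u]$.

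For part (b), the argument is structurally identical but on $W^{1,p}(\om)$ with $\om$ Lipschitz. Here I would use the compact embedding $W^{1,p}(\om)\hookrightarrow\hookrightarrow L^p(\om)$ (Rellich--Kondrachov on a Lipschitz domain), so again $u_k\to u$ strongly in $L^p(\om)$ after passing to a subsequence, with an $L^p$-dominant. The term $\int_\om\G|u|\dx$ is now handled by noting $|u_k|\to|u|$ in $L^p(\om)$ and $\G\in M^q(p;\om')\subset L^1(\om')$, together with the Morrey--Adams bound to control the tails exactly as above; the only adaptation is to invoke part $(ii)$ of Theorem~\ref{theorem:uncertainty} (the version for $W^{1,p}(\om')$ with an extension operator), which is available precisely because $\om$ is Lipschitz. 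The gradient term $\int_\om|\nabla u|_A^p\dx$ is weakly l.s.c.\ on $W^{1,p}(\om)$ for the same convexity reason. Assembling the pieces yields $\bar J[u]\le\liminf_k\bar J[u_k]$.

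The step I expect to be the main obstacle is the convergence of the potential integrals $\int_\om V|u_k|^p\dx\to\int_\om V|u|^p\dx$. One cannot simply dominate $|V|\,|u_k|^p$ by a fixed $L^1$ function, because the $u_k$ are only bounded in $W^{1,p}$, not pointwise. The fix is to exploit that $u_k$ is bounded in $L^p$, that $u_k\to u$ in $L^p$ (compactness) hence a.e.\ along a subsequence, and to use \eqref{ineq:uncertainty}: for any $\eta>0$ choose $\delta$ so that $\delta\sup_k\|\nabla(u_k-u)\|_{L^p}^p<\eta$, giving $\int_\om|V|\,|u_k-u|^p\dx\le\eta+C_\delta\|V\|_{M^q}^{\cdots}\|u_k-u\|_{L^p}^p\to\eta$; since $\eta$ is arbitrary, $\int_\om|V|\,|u_k-u|^p\dx\to0$, and then $|\,|u_k|^p-|u|^p|\le p(|u_k|^{p-1}+|u|^{p-1})|u_k-u|$ plus H\"older finishes it. Everything else (convexity $\Rightarrow$ weak l.s.c.\ of the gradient term, compact Sobolev embedding, H\"older for the linear term) is standard and I would state it with brief justification rather than detailed computation.
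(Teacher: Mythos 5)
Your proposal is correct and follows essentially the same strategy as the paper: weak lower semicontinuity of the gradient term via convexity of $\zeta\mapsto|\zeta|_A^p$, compact embedding into $L^p$ to get strong convergence and a.e.\ convergence of a subsequence, and the Morrey--Adams inequality to show that $\int_\om V|u_k|^p\dx\to\int_\om V|u|^p\dx$ (i.e.\ the potential term is in fact weakly continuous along the sequence). The only cosmetic difference is in how this last convergence is finalized: the paper decomposes $V=V^+-V^-$, applies Minkowski's inequality in $L^p(V^\pm\dx)$ together with Morrey--Adams to get $\limsup_k\int V^\pm|u_k|^p\le\int V^\pm|u|^p$, and uses Fatou for the reverse inequality, while you use the elementary bound $\bigl||u_k|^p-|u|^p\bigr|\le p(|u_k|^{p-1}+|u|^{p-1})|u_k-u|$ with H\"older and $\int|V||u_k-u|^p\dx\to0$; both rest on the Morrey--Adams estimate and are equally valid (the paper itself uses your variant in the isolation-of-$\lambda_1$ argument). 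Your remarks about $W_0^{1,p}$ compactness without boundary regularity in part (a), the role of the Lipschitz hypothesis and part $(ii)$ of Theorem~\ref{theorem:uncertainty} in part~(b), and the weak continuity of $u\mapsto\int\G u\dx$ for $\G\in L^{p'}$ all match the paper's reasoning.
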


\begin{proof} We first prove statement ($b$). Let $u,\{u_k\}_{k\in\N}\subset W^{1,p}(\om)$ be such that $u_k\rightharpoonup u$ in $W^{1,p}(\om)$. By the uniform boundedness principle, we have
\[
 K:=\sup_{k\in\N}\|u_k\|_{W^{1,p}(\om)}<\infty,
\]
and thus by the compact imbedding of $W^{1,p}(\om)$ in $L^p(\om)$, and by passing to a subsequence, we may assume that $u_k\rightarrow u$ in $L^p(\om)$ and a.e. in $\om$. 

Let $\delta>0$. By Minkowski's inequality and the Morrey-Adams theorem (Theorem~\ref{theorem:uncertainty}-($ii$)), we have
\bea\nonumber & &
  \Big(\int_\om \V^\pm|u_k|^p\dx\Big)^{1/p}-\Big(\int_\om \V^\pm|u|^p\dx\Big)^{1/p}
   \leq
    \Big(\int_\om \V^\pm|u_k-u|^p\dx\Big)^{1/p}
     \\ \label{replace:ineq:trace} & & \hspace{2em} \leq
      \Big(\delta\|\nabla(u_k-u)\|^p_{L^p(\om;\Rn)}+C(n,p,q,\delta,\|\V^\pm\|_{M^q(p;\om')})\|u_k-u\|^p_{L^p(\om)}\Big)^{1/p}
       \\ \nonumber & & \hspace{2em} \leq
        \delta^{1/p}(K+\|\nabla u\|_{L^p(\om;\Rn)})+C(n,p,q,\delta,\|\V^\pm\|_{M^q(p;\om')})\|u_k-u\|_{L^p(\om)}.
\eea
This shows that
 \be\nonumber
  \limsup_{k\to\infty}\int_\om\V^\pm|u_k|^p\dx
   \leq
    \int_\om\V^\pm|u|^p\dx.
 \ee
On the other hand, by Fatou's Lemma, we have
 \be\nonumber
  \int_\om\V^\pm|u|^p\dx
   \leq
    \liminf_{k\to\infty}\int_\om\V^\pm|u_k|^p\dx.
 \ee
The last two inequalities imply
 \be\nonumber
  \lim_{k\to\infty}\int_\om\V|u_k|^p\dx
   =
    \int_\om\V|u|^p\dx,
 \ee
The weak lower semicontinuity of the gradient term follows from the convexity of the Lagrangian $\zeta\mapsto |\zeta|_{A(x)}^p$. We deduce then
\be\label{wlsc:Q'}
 Q_{A,p,\V}[u]\leq\liminf_{k\to\infty}Q_{A,p,\V}[u_k].
\ee
For the last term of $J$, we work similarly
\bea\nonumber &  &
 \int_\om\G^\pm|u_k|\dx-\int_\om\G^\pm|u|\dx
  \leq
   \|\G^\pm\|^{1/p'}_{L^1(\om)}\Big(\int_\om\G^\pm|u_k-u|^p\dx\Big)^{1/p}
    \\ \nonumber & & \qquad  \leq
     \delta^{1/p}\|\G^\pm\|^{1/p'}_{L^1(\om)}\big(K+\|\nabla u\|_{L^p(\om;\Rn)}\big)+C(n,p,q,\delta,\|\G^\pm\|_{M^q(p;\om')})\|u_k-u\|_{L^p(\om)},
\eea
and thus
\be\nonumber
 \limsup_{k\to\infty}\int_\om\G^\pm|u_k|\dx
  \leq
   \int_\om\G^\pm|u|\dx.
\ee
On the other hand,
\be\nonumber
 \int_\om\G^\pm|u|\dx
  \leq
   \liminf_{k\to\infty}\int_\om\G^\pm|u_k|\dx.
\ee
The last two inequalities imply
\be\nonumber
 \lim_{k\to\infty}\int_\om\G|u_k|\dx
  =
   \int_\om\G|u|\dx,
\ee
and thus $\bar{J}$ is weakly lower semicontinuous in $W^{1,p}(\om)$.

For the proof of the weak lower semicontinuity of $J$ in $\W(\om)$, one follows the same steps, but uses Theorem~\ref{theorem:uncertainty}-($i$) in (\ref{replace:ineq:trace}), in order to obtain (\ref{wlsc:Q'}). Note that since we require in this case that $\G\in L^{p'}(\om)$, the functional  $I(u):=\int_\om\G u\dx$ is weakly continuous since it is a bounded linear functional.
\end{proof}

\begin{proposition}\label{proposition:coercivity}
(a)  Let $\om\Subset\om'\Subset  \R^n$, where $\om$ is Lipschitz,  and $\G,~\V\in M^q(p;\om')$.  If $\,\V$ is nonnegative, then for any $f\in W^{1,p}(\om)$ we have that $\bar{J}$ is coercive in \[\mathcal{A}:=\{u\in W^{1,p}(\om)\mbox{ s.t. }u=f\mbox{ on }\partial\om\}.\]
(b) Let $\om\Subset \R^n$, $\V\in M^q(p;\om)$ and $\G\in L^{p'}(\om)$. Assume that for some $\e>0$ we have
 \be\label{subcriticality}
  Q_{A,p,\V}[u;\om]
   \geq
    \e\|u\|_{L^p(\om)}^p
     \qquad \mbox{for all }u\in\W(\om).
 \ee
Then $J$ is coercive in $\W(\om)$.
\end{proposition}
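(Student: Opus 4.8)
Both parts follow the same pattern: first reduce the principal part $Q_{A,p,\V}[\cdot;\om]$ to a genuine $p$-th power of $\|\nabla u\|_{L^p(\om;\Rn)}$ (using uniform ellipticity $(\rm E)$ together with the Morrey--Adams theorem to absorb the bad part of the potential), then estimate the lower-order, linear-type term by $O(\|\nabla u\|_{L^p(\om;\Rn)})$, and finally invoke $p>1$ so that the $p$-th power dominates.

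For part $(b)$, I would first show that hypothesis (\ref{subcriticality}) self-improves to a coercivity bound on the gradient. Applying Theorem~\ref{theorem:uncertainty}-$(i)$ to the potential term with a parameter $\delta>0$ gives
\[
 \int_\om\V|u|^p\dx\;\geq\;-\delta\|\nabla u\|^p_{L^p(\om;\Rn)}-C_\delta\|u\|^p_{L^p(\om)},\qquad C_\delta:=\frac{C(n,p,q)}{\delta^{n/(pq-n)}}\|\V\|_{M^q(p;\om)}^{pq/(pq-n)},
\]
so that, by $(\rm E)$ and then (\ref{subcriticality}) in the form $\|u\|^p_{L^p(\om)}\leq\e^{-1}Q_{A,p,\V}[u;\om]$, the choice $\delta=\tfrac12\theta_\om^p$ yields
\[
 \tfrac12\theta_\om^p\|\nabla u\|^p_{L^p(\om;\Rn)}\;\leq\;Q_{A,p,\V}[u;\om]+C_\delta\|u\|^p_{L^p(\om)}\;\leq\;(1+C_\delta/\e)\,Q_{A,p,\V}[u;\om].
\]
Hence $Q_{A,p,\V}[u;\om]\geq c_1\|\nabla u\|^p_{L^p(\om;\Rn)}$ for some $c_1>0$. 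Since $\om$ is bounded, Poincar\'e's and H\"older's inequalities then give
\[
 J[u]\;\geq\;c_1\|\nabla u\|^p_{L^p(\om;\Rn)}-\|\G\|_{L^{p'}(\om)}\|u\|_{L^p(\om)}\;\geq\;c_1\|\nabla u\|^p_{L^p(\om;\Rn)}-C\|\G\|_{L^{p'}(\om)}\|\nabla u\|_{L^p(\om;\Rn)},
\]
and the right-hand side tends to $+\infty$ as $\|\nabla u\|_{L^p(\om;\Rn)}\to\infty$ because $p>1$; this is coercivity of $J$ on $\W(\om)$.

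For part $(a)$, since $\V\geq0$ the potential term of $\bar J$ is nonnegative and may be discarded, so by $(\rm E)$ we get $\bar J[u]\geq\theta_\om^p\|\nabla u\|^p_{L^p(\om;\Rn)}-\int_\om|\G|\,|u|\dx$. To control the last integral I would write $|\G|\,|u|=|\G|^{1/p'}\,(|\G|^{1/p}|u|)$, apply H\"older's inequality to get $\int_\om|\G|\,|u|\dx\leq\|\G\|^{1/p'}_{L^1(\om)}\big(\int_\om|\G|\,|u|^p\dx\big)^{1/p}$, and then bound $\int_\om|\G|\,|u|^p\dx$ by Theorem~\ref{theorem:uncertainty}-$(ii)$ with a fixed admissible $\delta$ --- this is precisely where the hypotheses that $\om$ is Lipschitz, $\om\Subset\om'$ and $\G\in M^q(p;\om')$ enter. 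This produces an estimate of the form $\int_\om|\G|\,|u|\dx\leq C\big(\|\nabla u\|_{L^p(\om;\Rn)}+\|u\|_{L^p(\om)}\big)$. Finally, for $u\in\mathcal A$ one has $u-f\in\W(\om)$, so Poincar\'e's inequality gives $\|u\|_{L^p(\om)}\leq\|f\|_{L^p(\om)}+C\|\nabla(u-f)\|_{L^p(\om;\Rn)}\leq C\|\nabla u\|_{L^p(\om;\Rn)}+C_f$; combining everything, $\bar J[u]\geq\theta_\om^p\|\nabla u\|^p_{L^p(\om;\Rn)}-C\|\nabla u\|_{L^p(\om;\Rn)}-C_f\to+\infty$ as $\|\nabla u\|_{L^p(\om;\Rn)}\to\infty$. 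Since on the affine set $\mathcal A$ the condition $\|u\|_{W^{1,p}(\om)}\to\infty$ forces $\|\nabla u\|_{L^p(\om;\Rn)}\to\infty$ (again by Poincar\'e applied to $u-f$), this is the asserted coercivity of $\bar J$ on $\mathcal A$.

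The computations are routine once the two versions of the Morrey--Adams theorem are in hand; the only structural point worth emphasizing is the self-improvement step in $(b)$, namely that a spectral-gap bound measured in the $L^p$-norm automatically upgrades, through Morrey--Adams and uniform ellipticity, to a coercivity estimate in the gradient norm. I anticipate no serious obstacle, though in $(a)$ one must be slightly careful that $\mathcal A$ is an affine rather than a linear subspace.
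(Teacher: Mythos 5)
Your proofs of both parts are correct and use exactly the same ingredients as the paper (hypothesis (E), the Morrey--Adams theorem, H\"older, Poincar\'e on $u-f$ for part (a) and on $\W(\om)$ for part (b)), but the way you organize part (b) differs slightly from the paper and is worth commenting on. The paper proceeds in the style of bounding a sublevel set: from $J[u]\leq t$ and (\ref{subcriticality}) it first extracts a uniform $L^p$-bound $\|u\|_{L^p(\om)}\leq m$, then feeds that bound back through Morrey--Adams to control $\|\nabla u\|_{L^p}$. You instead rearrange the same inequalities to derive the \emph{a priori} gradient coercivity estimate $Q_{A,p,\V}[u;\om]\geq c_1\|\nabla u\|^p_{L^p(\om;\Rn)}$ valid on all of $\W(\om)$ --- that is, you promote the spectral-gap hypothesis (\ref{subcriticality}), measured in $L^p$, to an equivalent norm bound in the gradient seminorm --- and only then split off the linear $\G$-term. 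Your route yields a cleaner intermediate fact (the equivalence of $Q_{A,p,\V}$ with the $\Dd^{1,p}$-seminorm on $\W(\om)$, which is of independent use), at the cost of one extra absorption step; the paper's sublevel-set argument is marginally more direct but establishes no stand-alone inequality. For part (a) your argument is effectively the paper's: discard the nonnegative potential term, estimate $\int_\om|\G||u|\dx$ via $\|\G\|_{L^1(\om)}^{1/p'}\big(\int_\om|\G||u|^p\big)^{1/p}$ and Theorem~\ref{theorem:uncertainty}-(ii), and then use Poincar\'e for $u-f\in\W(\om)$; the paper phrases this as a bound $\|u\|_{W^{1,p}(\om)}\leq C$ on the sublevel set, while you phrase it as a lower bound $\bar J[u]\geq\theta_\om^p\|\nabla u\|^p-C\|\nabla u\|-C_f$ that is unbounded, together with the (correct) observation that on the affine set $\mathcal A$ the seminorm $\|\nabla u\|_{L^p}$ controls the full norm. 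The two are equivalent presentations of the same computation, and your final remark that on $\mathcal A$ Poincar\'e must be applied to $u-f$ rather than $u$ is exactly the point that has to be watched.
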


\begin{proof} (a) Fix $t\in\R$, and suppose that $u\in\mathcal{A}$ is such that $\bar{J}[u]\leq t$. It is enough to prove that
 \be\label{estim:W:crcv}
  \|u\|_{W^{1,p}(\om)}:=\|u\|_{L^p(\om)}+\|\nabla u\|_{L^p(\om;\Rn)}
   \leq
    C,
 \ee
with $C$ independent of $u$. To this end, from $\bar{J}[u]\leq t$ and since $\V\geq0$ a.e. in $\om$, we readily deduce
\bea\nonumber & &
 \int_\om|\nabla u|_A^p\dx
  \leq
   t+\int_\om\G|u|\dx
    \\ \nonumber & & \hspace{5.4em} \leq
     t+\|\G\|^{1/p'}_{L^1(\om)}\Big(\int_\om|\G||u|^p\dx\Big)^{1/p}
      \\ \label{estim:nabla:G} & & \hspace{5.4em} \leq
       t+C\|u\|_{W^{1,p}(\om)}.
\eea
for some positive constant $C$ that depends only on $n,p,q,\om,\|\G\|_{M^q(p;\om')}$ and $\|\G\|_{L^1(\om)},$ where we have used Theorem \ref{theorem:uncertainty}-($ii$) in the last inequality. Thus, applying also assumption (E), we obtain
 \be\label{estim:coer:nabla}
  \|\nabla u\|^p_{L^p(\om;\Rn)}\leq c_1+c_2\|u\|_{W^{1,p}(\om)},
 \ee
where $c_1,c_2$ are positive constants independent of $u.$ Next observe that $u-f\in\W(\om),$ so that
 \bea\nonumber &  &
  \|u\|_{L^p(\om)}
   \leq
    \|u-f\|_{L^p(\om)} + \|f\|_{L^p(\om)}
     \\ \nonumber & & \hspace{4em} \leq
      C_P\|\nabla(u-f)\|_{L^p(\om;\Rn)} + \|f\|_{L^p(\om)},
 \eea
for a positive constant $C_P$ depending only on $n$ and $\om$, because of the Poincar\'{e} inequality in $\W(\om)$. Using (E) we have successively
 \bea\nonumber &  &
  \|u\|_{L^p(\om)}
   \leq
    C_P\big(\|\nabla u\|_{L^p(\om;\Rn)}+\|\nabla f\|_{L^p(\om;\Rn)}\big) + \|f\|_{L^p(\om)}
     \\ \nonumber & & \hspace{4em} \leq
      \frac{C_P}{\theta_\om}\Bigg(\Big(\int_\om|\nabla u|_A^p\dx\Big)^{1/p}+\|\nabla f\|_{L^p(\om;\Rn)}\Bigg) + \|f\|_{L^p(\om)}
       \\ \nonumber & & \hspace{4em} \leq
        \frac{C_P}{\theta_\om}\Big(\big(t+C\|u\|_{W^{1,p}(\om)}\big)^{1/p}+\|\nabla f\|_{L^p(\om;\Rn)}\Big) + \|f\|_{L^p(\om)},
 \eea
with $C$ as in (\ref{estim:nabla:G}). This implies the estimate
 \be\label{estim:coer:u}
  \|u\|^p_{L^p(\om)}\leq c_3+c_4\|u\|_{W^{1,p}(\om)},
 \ee
where $c_3,c_4$ are positive constants independent of $u.$ Now (\ref{estim:coer:nabla}) and (\ref{estim:coer:u}) give
 \[
  \|u\|^p_{W^{1,p}(\om)}
   \leq
    c_5+c_6\|u\|_{W^{1,p}(\om)},
 \]
for some positive constants $c_5,c_6$ that are independent of $u$. This implies in turn $\|u\|_{W^{1,p}(\om)}\leq\max\{1,(c_5+c_6)^{1/(p-1)}\}$, and (\ref{estim:W:crcv}) is proved.

(b) Let us prove the coercivity of $J$ in $\W(\om)$. Assume that $J[u]\leq t$ in (\ref{subcriticality}), then by applying H\"{o}lder's inequality, we obtain
 \bea\nonumber & &
  \e\|u\|_{L^p(\om)}^p
   \leq
    t+\int_\om\G u\dx
     \\ \nonumber & & \hspace{4.5em} \leq
      t+\|\G\|_{L^{p'}(\om)}\|u\|_{L^p(\om)}.
 \eea
This implies the estimate
 \be\label{u:crcv}
  \|u\|_{L^p(\om)}
   \leq m:=
    \max\Bigg\{1,\left(\frac{t+\|\G\|_{L^{p'}(\om)}}{\e}\right)^{1/(p-1)}\Bigg\}.
 \ee
From $J[u]\leq t$, applying once more H\"{o}lder's inequality and also the Morrey-Adams theorem (Theorem~\ref{theorem:uncertainty}-($i$)) we get
 \bea\nonumber &  &
  \int_\om|\nabla u|_A^p\dx
   \leq
    t+\int_\om\G u\dx+\int_\om|\V||u|^p\dx
     \\ \label{nablau:crcv} & & \hspace{5.5em} \leq
      t
       +\|\G\|_{L^{p'}(\om)}\|u\|_{L^p(\om)}
        +\delta\|\nabla u\|_{L^p(\om;\Rn)}^p
         +C'\|u\|^{p}_{L^{p}(\om)},
 \eea
where $C'=C_{n,p,q}\delta^{-n/(pq-n)}\|\V\|_{M^q(p;\om)}^{pq/(pq-n)}$. Thus, from (\ref{u:crcv}), (\ref{nablau:crcv}) and assumption (\rm E) we have for $\delta<\theta_\om^p$,
 \be\nonumber
  (\theta_\om^p-\delta)\|\nabla u\|_{L^p(\om;\Rn)}^p
   \leq
    t+\|\G\|_{L^{p'}(\om)}m+C'm^p,
 \ee
which, together with (\ref{u:crcv}), implies $\|u\|_{W^{1,p}(\om)} \leq C$.
\end{proof}

\begin{remark}
Propositions~\ref{proposition:wlsc} and \ref{proposition:coercivity} will be used to prove the existence of a minimizer for the  Rayleigh-Ritz variational problem (\ref{first:eigenvalue}), and to establish the weak comparison principle using the sub/supersolution method (see \S5.1).
\end{remark}
\subsection{Existence, properties and characterization of the positivity of $\lambda_1$}\label{subsec:eigenvalue}
The following theorem generalizes several results in the literature concerning the principal eigenvalue $\lambda_1$ (see for example \cite[Theorem 2.1]{AllH}, \cite[Proposition 2]{An}, \cite[Lemma 3]{GMdL}, \cite[Lemma 6.4]{PR}). Note that our results applies to a general bounded domain, and in particular, the boundary point lemmas are not used in the proof (cf.  \cite[Lemma 3]{GMdL} and \cite{PR}). In addition, we do not need any further regularity assumption on the entries of the matrix $A$ as in the aforementioned references, while the potential $V$ is far from being bounded.

\begin{theorem}\label{proposition:principal}
Let $\om$ be a bounded domain in $\Rn$, and assume that $A$ is a uniformly elliptic, bounded matrix in $\om$, and $V\in M^q(p;\om)$. Then the operator $Q'_{A,p,V}$ in $\om$ admits a principal eigenvalue $\lambda_1$ given by the Rayleigh-Ritz variational formula (\ref{first:eigenvalue}). Moreover, $\lambda_1$ is the only principal eigenvalue, it is simple and an isolated eigenvalue in $\R$.
\end{theorem}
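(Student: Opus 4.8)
The plan is to establish the four claims in turn—existence and variational characterization of $\lambda_1$, then simplicity, then isolation—relying on the preparatory material of \S\ref{subsec:prelims:eigenvalue} in place of the classical Picone identity. First I would prove existence of a minimizer for the Rayleigh–Ritz quotient \eqref{first:eigenvalue}. Set $\lambda_1:=\inf\{Q_{A,p,V}[u;\om]:u\in\W(\om),\ \|u\|_{L^p(\om)}=1\}$. The Morrey–Adams theorem (Theorem~\ref{theorem:uncertainty}-$(i)$) with a small $\delta$ shows $Q_{A,p,V}[u;\om]\geq (\theta_\om^p-\delta)\|\nabla u\|_{L^p(\om;\Rn)}^p - C\|u\|_{L^p(\om)}^p$, so $\lambda_1>-\infty$ and any minimizing sequence $\{u_k\}$, normalized in $L^p(\om)$, is bounded in $\W(\om)$. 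Extracting a weakly convergent subsequence $u_k\rightharpoonup u$ in $\W(\om)$ (hence $u_k\to u$ in $L^p(\om)$ by Rellich, so $\|u\|_{L^p(\om)}=1$), the weak lower semicontinuity of $J=Q_{A,p,V}[\cdot;\om]$ (Proposition~\ref{proposition:wlsc}(a) with $\G\equiv0$) gives $Q_{A,p,V}[u;\om]\leq\liminf_k Q_{A,p,V}[u_k;\om]=\lambda_1$, so $u$ is a minimizer. Replacing $u$ by $|u|$ (which does not increase the quotient, since $|\nabla|u||_A=|\nabla u|_A$ a.e.), we may take $u\geq0$. Computing the Euler–Lagrange equation of the constrained problem shows $u$ is a nonnegative weak solution of \eqref{eigendef} with this $\lambda_1$; by the Harnack inequality (Theorem~\ref{theorem:harnack}) $u>0$ in $\om$, so $\lambda_1$ is a principal eigenvalue.

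Next, simplicity and the fact that $\lambda_1$ is the \emph{only} principal eigenvalue: this is where Lemma~\ref{lemma:lind} does the work that Picone's identity did elsewhere. Suppose $w_\lambda,w_\mu\geq0$ are eigenfunctions with eigenvalues $\lambda,\mu$ and (here) $V_1=V_2=V$. Apply Lemma~\ref{lemma:lind}(ii): the left-hand side is $(\lambda-\mu)\int_\om(w_\lambda^p-w_\mu^p)\dx$, which after suitable $L^p$-normalization of the two eigenfunctions vanishes, while the right-hand side is a nonnegative multiple of $\int_\om(w_\lambda^p+w_\mu^p)\big|\nabla\log(w_\lambda/w_\mu)\big|_A^p\dx$ (for $p\geq2$; the $p<2$ expression analogously). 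Hence $\nabla\log(w_\lambda/w_\mu)=0$ a.e., so $w_\lambda/w_\mu$ is constant on the connected set $\om$, forcing $\lambda=\mu$ and proportionality of the eigenfunctions; this simultaneously yields simplicity of $\lambda_1$ and shows no second principal eigenvalue exists. A small technical point: to invoke Lemma~\ref{lemma:lind}(ii) one needs the eigenfunctions to be strictly positive (they are, by Harnack) and lie in $\W(\om)$ (they do); the $\log$-gradient terms are finite because one works first with $w_i+h$ and passes $h\to0$ via Fatou, exactly as in the lemma's statement.

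Finally, isolation of $\lambda_1$ in $\R$. I would argue that no sequence of eigenvalues $\lambda_k\downarrow\lambda_1$ with $\lambda_k>\lambda_1$ can exist. Let $w_k$ be eigenfunctions for such $\lambda_k$, normalized by $\|w_k\|_{L^p(\om)}=1$; from \eqref{eigendef} with $u=w_k$ and the Morrey–Adams bound, $\{w_k\}$ is bounded in $\W(\om)$, so up to a subsequence $w_k\rightharpoonup w$ in $\W(\om)$ and $w_k\to w$ in $L^p(\om)$, with $\|w\|_{L^p(\om)}=1$. Passing to the limit in the equation—using, for the principal part, a Maz'ya-type monotonicity argument as in the proof of Proposition~\ref{Harnack:cp} (the inequality \eqref{cauchy-schwartz} and the test function $w_k-w$) to upgrade to strong $W^{1,p}$ convergence, and, for the potential term, dominated convergence via Theorem~\ref{theorem:uncertainty}—one finds $w$ is an eigenfunction with eigenvalue $\lambda_1$. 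By simplicity of $\lambda_1$, $w$ is (a multiple of) the positive ground state, so $w>0$ in $\om$; then for large $k$ the eigenfunctions $w_k$ must be sign-changing (else $w_k$ would be a nonnegative eigenfunction with eigenvalue $\lambda_k>\lambda_1$, contradicting uniqueness of the principal eigenvalue), so $\om_k^\pm:=\{w_k^\pm>0\}$ are both nonempty. Testing the equation for $w_k$ against $w_k^+$ gives $\lambda_k^{-1}\big(Q_{A,p,V}[w_k^+;\om]-\lambda_1\|w_k^+\|_{L^p}^p\big)\le$ (something controlled), hence by the Morrey–Adams inequality a lower bound $\Ln(\om_k^+)\geq c>0$ independent of $k$ (the Faber–Krahn-type estimate: a subdomain supporting an eigenfunction has measure bounded below), and likewise for $\om_k^-$. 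But $w_k^+\to w>0$ in $L^p(\om)$ would force $\Ln(\om_k^-)\to0$, a contradiction. The hardest step, I expect, is this last one: making the Faber–Krahn lower bound on $\Ln(\om_k^\pm)$ quantitative in the Morrey setting—specifically, extracting from Theorem~\ref{theorem:uncertainty}-$(i)$ a bound of the form $\lambda_1(Q_{A,p,V};\om')\geq c\,\Ln(\om')^{-\kappa}-C$ uniform over subdomains $\om'\subset\om$, which is what rules out the splitting of mass; everything else is a routine adaptation of the linear theory together with the tools already assembled in the excerpt.
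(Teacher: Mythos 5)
Your proposal proceeds essentially as the paper does: direct minimization of the Rayleigh--Ritz quotient (via Proposition~\ref{proposition:wlsc} and Morrey--Adams) for existence of a positive principal eigenpair, Lemma~\ref{lemma:lind}(ii) in place of Picone's identity for simplicity and for uniqueness of the principal eigenvalue, and a Morrey--Adams/Poincar\'e measure bound on the negativity sets $\om_k^-$ of eigenfunctions to rule out a sequence $\lambda_k\downarrow\lambda_1$. The spots where you deviate---normalizing both eigenfunctions in $L^p$ so the left side of Lemma~\ref{lemma:lind}(ii) vanishes outright rather than scaling one eigenfunction by a small $\e$ as the paper does; testing the eigenvalue equation directly against $w_k^\pm$ to get $Q_{A,p,V}[w_k^\pm;\om]=\lambda_k\|w_k^\pm\|^p_{L^p(\om)}$ rather than invoking Lemma~\ref{lemma:Kato} to view $v_k^-$ as a subsolution of $Q'_{A,p,V-\lambda_k}[u]=0$; and closing with a.e.\ convergence plus dominated convergence rather than the paper's Egoroff argument---are all correct, slightly more direct micro-variations within the same overall proof, though in the simplicity step you should also note explicitly (as the paper does) that a second eigenfunction for $\lambda_1$ is a minimizer and hence of constant sign, so that Lemma~\ref{lemma:lind}(ii), which requires nonnegative eigenfunctions, is applicable.
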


\begin{proof} We define $\lambda_1$ by (\ref{first:eigenvalue}) and prove that it is a principal eigenvalue. Using the Morrey-Adams theorem (Theorem~\ref{theorem:uncertainty}) with $\delta=\theta_\om^p$ one sees that
 \be\nonumber
  \lambda_1
   \geq
     -C(n,p,q)\theta_\om^{-np/(pq-n)}\|V\|_{M^q(p;\om)}^{pq/(pq-n)}
     >-\infty.
 \ee
In particular, setting $\V:=V-\lambda_1+\e$, with $\e>0$, we get that
 \be\nonumber
  Q_{A,p,\V}[u;\om]
   \geq
    \e\|u\|_{L^p(\om)}^p
     \qquad \mbox{for all }u\in\W(\om).
 \ee
Applying Propositions \ref{proposition:wlsc}-($a$) and \ref{proposition:coercivity}-($b$) with $\G\equiv0$, we get that $Q_{A,p,V-\lambda_1+\e}[\cdot;\om]$ is coercive and weakly lower semicontinuous in $\W(\om)$, and consequently, also in $\W(\om)\cap\{\|u\|_{L^p(\om)}=1\}$. Hence, the infimum
 \be\nonumber
  \e=\inf_{u\in \W(\om)\setminus\{0\}}\frac{Q_{A,p,V-\lambda_1+\e}[u;\om]}{\|u\|^p_{L^p(\om)}},
 \ee
is attained in $\W(\om)\setminus\{0\}$ (see e.g., \cite[Theorem~1.2]{Str}), and thus $\lambda_1$ is attained in $\W(\om)\setminus\{0\}$.

Let $v_1$ be a minimizer of (\ref{first:eigenvalue}). It is quite standard to see that $v_1$ is a solution of (\ref{eigenproblem}) with $\lambda=\lambda_1$. Since  $|v_1|\in\W(\om)\setminus\{0\}$, it follows that $\big|\nabla(|v_1|)\big|_A=|\nabla v_1|_A$ a.e. in $\om$. This implies that $|v_1|$ is also a minimizer of (\ref{first:eigenvalue}) and thus a nonnegative solution of (\ref{eigenproblem}) with $\lambda=\lambda_1$. By the Harnack inequality, and the H\"older continuity of $|v_1|$,  we obtain that $|v_1|$ is strictly positive in $\om$. In light of the homogeneity of the eigenvalue problem (\ref{eigenproblem}), we may assume that $v_1$ is strictly positive in $\om$.

\medskip

\noindent To prove the simplicity of $\lambda_1$, we assume  that $v_2\in\W(\om)$ is another eigenfunction  of (\ref{eigenproblem}) with $\lambda=\lambda_1$. Hence, $v_2$ is a minimizer of (\ref{first:eigenvalue}), and thus has a definite sign. Without loss of generality, we may assume that $v_2>0$ in $\om$.  Applying Lemma~\ref{lemma:lind}-(ii) with $V_1=V_2=V$, $\lambda=\mu=\lambda_1$ and $w_\lambda=v_1,~w_\mu=v_2$ we obtain
 \be\nonumber
  0
   \geq
    c_p\Bigg\{
       \begin{array}{ll}
        \int_\om(v^p_1+v^p_2)\big|\nabla\log\frac{v_1}{v_2}\big|_A^p\dx, & \mbox{if }p\geq2,
         \\[8pt]
        \int_\om(v^p_1+v^p_2)\big|\nabla\log\frac{v_1}{v_2}\big|_A^2\big(|\nabla\log v_1|_A+|\nabla\log v_2|_A\big)^{p-2}\dx, & \mbox{if }p<2,
       \end{array}
 \ee
from which because of (\rm{E}) we deduce $|v_2\nabla v_1-v_1\nabla v_2|=0$ a.e. in $\om$, which in turn implies the existence of a positive constant $c$ such that $v_2=cv_1$ a.e. in $\om$.

\medskip

\noindent Next we show that $\lambda_1$ is the only eigenvalue possessing a nonnegative eigenfunction associated to it. If $\lambda>\lambda_1$ is an eigenvalue with eigenfunction
$\e v_\lambda\geq 0$, where $\e>0$ is small. Then by Lemma~\ref{lemma:lind}-(ii) with $V_1=V_2=V$, $\mu=\lambda_1$, and $w_\mu=v_1$, we have
 \be\nonumber
  (\lambda-\lambda_1)\int_\om(\e v^p_\lambda-v^p_1)\dx
   \geq 0,
 \ee
which is a contradiction for $\e$ small enough.

\medskip

\noindent It remains thus to prove that $\lambda_1$ is an isolated eigenvalue in $\R$. Suppose that there exists a sequence of eigenvalues $\{\lambda_k\}_{k\in\N}\subset \R$ such that $\lambda_k\downarrow\lambda_1$, as $k\to\infty$. Let $\{v_k\}_{k\in\N}$ be a sequence of the associated normalized eigenfunctions. We claim that $\{v_k\}_{k\in\N}$ is bounded in $\W(\om)$. Indeed, by the Morrey-Adams theorem, we obtain for some $0<\delta<1$ that
\bea\nonumber & &
  \int_\om|\nabla v_k|_A^p\dx
   \leq
    |\lambda_k| + \int_\om|V||v_k|^p\dx
     \\ \label{unif:isolation} & & \hspace{6em} \leq
      \delta\|\nabla v_k\|_{L^p(\om;\Rn)}^p+C,
 \eea
which implies our claim. Therefore, up to a subsequence, $v_k$ convergence weakly in $\W(\om)$, and also in $L^p(\om)$.

Next we claim that $v_k\to w$ in $\W(\om)$. Since $v_k\rightharpoonup w$ in $\W(\om)$, it is enough to show that $\{\|\nabla v_k\|_{L^p(\om;\Rn)}\}$ is a Cauchy sequence. Let $\e>0$ be arbitrary. The inequality
\[|a^p-b^p|\leq p|a-b|(a^{p-1}+b^{p-1})\qquad a,b\geq0,\] together with the H\"older inequality, and the Morrey-Adams theorem imply for all sufficiently large $k,l\in\N$
\bea\nonumber
    \Big|\int_\om|\nabla v_k|_A^p\dx-\int_\om|\nabla v_l|_A^p\dx\Big|
     &  &
      \\ \nonumber & & \hspace{-12em}\leq
       |\lambda_k-\lambda_l| + \int_\om|V|\big||v_k|^p- |v_l|^p\big|\dx
        \\ \nonumber & & \hspace{-12em}\leq
         \varepsilon + p\int_\om|V||v_k-v_l|\big||v_k|^{p-1}+ |v_l|^{p-1}\big|\dx.
          \\ \label{aa} & & \hspace{-12em}\leq
           \e + C(p)\Big(\int_\om|V||v_k-v_l|^p\dx\Big)^{1/p}\Big(\int_\om|V||v_k|^p\dx+\int_\om|V||v_l|^p\dx\Big)^{1/p'}.
\eea
Applying first the Morrey-Adams theorem and then (\ref{unif:isolation}), we see that both integrals on the second factor of (\ref{aa}) are uniformly bounded in $k,l$ respectively. For the first factor we use again the Morrey-Adams theorem to arrive at
\bea\nonumber
 & & \Big|\int_\om|\nabla v_k|_A^p\dx-\int_\om|\nabla v_l|_A^p\dx\Big|
  \\ \label{cauchy} & & \hspace{4em} \leq
   \e + C_1\Big(\e\int_\om|\nabla(v_k-v_l)|^p\dx +C_2\e^{n/(n-pq)}\int_\om|v_k-v_l|^p\dx\Big)^{1/p},
\eea
where $C_1,C_2$ are positive constants independent of $k,l$. The convergence in $L^p(\om)$ of $v_k$ to $v$ implies that there exists $m_\e\in\N$ such that
\[
 \int_\om|v_k-v_l|^p\dx\leq\e^{n/(pq-n)+1}\qquad\mbox{for all }k,l\geq m_\e.
\]
Coupling this with (\ref{cauchy}) implies that $\{\|\nabla v_k\|_{L^p(\om;\Rn)}\}$ is a Cauchy sequence.

By a similar argument, one shows that
$$Q_{A,p,V}[w]=\lambda_1\|w\|^p_{L^p(\om)}\,, $$
hence,  $w$ is a minimizer of the Rayleigh-Ritz variational problem \eqref{first:eigenvalue}, and hence an eigenfunction of (\ref{eigenproblem}) with $\lambda=\lambda_1$. The simplicity of $\lambda_1$ implies that $w=\pm v$, where $v>0$ is the normalized principal eigenfunction with an eigenvalue $\lambda_1$. Without loss of generality, we may assume that $v_k\rightarrow v$ in $\W(\om)$.

Set $\om_k^-:=\{x\in\om~|~v_k<0\}$. By Lemma \ref{lemma:Kato} (with $\V=V-\lambda_k$) we have that $v_k^-$ is a subsolution of $Q'_{A,p,V-\lambda_k}[u]=0$ in $\om,$ and thus from (\ref{eigendef})
 \bea\nonumber & &
  \int_\om|\nabla v_k^-|_A^p\dx
   \leq
    \int_\om|V-\lambda_k||v_k^-|^p\dx
     \\ \nonumber & & \hspace{6em} \leq
      \delta\|\nabla v_k^-\|^p_{L^p(\om;\Rn)}+C(n,p,q)\delta^{-n/(pq-n)}\|V-\lambda_k\|_{M^q(p;\om)}^{pq/(pq-n)}\|v_k^-\|^p_{L^p(\om)},
 \eea
for any $\delta>0$, where we have used Theorem~\ref{theorem:uncertainty}. For $\delta<\theta_\om^p$ we deduce because of assumption (\rm{E}) that
 \be\nonumber
  (\theta_\om^p-\delta)\|\nabla v_k^-\|^p_{L^p(\om;\Rn)}
   \leq
    C(n,p,q)\delta^{-n/(pq-n)}\|V-\lambda_k\|_{M^q(p;\om)}^{pq/(pq-n)}\|v_k^-\|^p_{L^p(\om)}.
 \ee
Since $v_k^-\equiv0$ in $\om\setminus\om_k^-$, we use Poincar\'{e}'s inequality
 \be\label{poincare}
  \|v_k^-\|_{L^p(\om)}
   \leq
    \Big(\frac{\Ln(\om_k^-)}{\Ln(B_1)}\Big)^{1/n}
     \|\nabla v_k^-\|_{L^p(\om;\Rn)},
 \ee
to get
 \be\nonumber
  (\theta_\om^p-\delta)\|\nabla v_k^-\|^p_{L^p(\om;\Rn)}
   \leq
    C(n,p,q)\delta^{-n/(pq-n)}\|V-\lambda_k\|_{M^q(p;\om)}^{pq/(pq-n)}\Big(\frac{\Ln\big(\om^-_k\big)}{\Ln(B_1)}\Big)^{p/n}\|\nabla v_k^-\|^p_{L^p(\om;\Rn)}.
 \ee
Canceling $\|\nabla v_k^-\|^p_{L^p(\om;\Rn)}$, rearranging and raising to the $n/p$ we arrive at
 \be\label{estim:omega:minus}
  \Ln\big(\om_k^-\big)
   \geq
    C(n,p,q)\Ln(B_1)(\theta_\om^p-\delta)^{n/p}\delta^{n^2/[p(pq-n)]}\|V-\lambda_k\|_{M^q(p;\om)}^{-nq/(pq-n)}.
 \ee
Notice that $\|V-\lambda_1\|_{M^q(p;\om)}$ is a strictly positive number. Indeed, assume that $\|V-\lambda_1\|_{M^q(p;\om)}=0$. Then $v_1$ is a nontrivial solution of the Dirichlet problem for the $(p,A)$-Laplace operator which is false under our assumptions on $A$ (see for example \cite{HKM,PS}).

On the other hand, $\|V-\lambda_k\|_{M^q(p;\om)}\to \|V-\lambda_1\|_{M^q(p;\om)}$ as $k\to\infty$. Therefore, there exists $C>0$ such that
 \be\label{estim:omega:morrey}
  \|V-\lambda_k\|_{M^q(p;\om)}
   \geq
    C\|V-\lambda_1\|_{M^q(p;\om)}\qquad \forall k\geq k_0.
 \ee
Consequently, (\ref{estim:omega:morrey}) applied to (\ref{estim:omega:minus}) implies that
$$\Ln\big(\om_k^-\big)\geq C>0 \qquad \forall k\geq k_0,$$
for a positive constant $C$ independent on $k$.

With this at hand, the rest of the proof follows \cite[Th\'{e}or\`{e}me 2]{An}. We include it for completeness: Let $\eta>0$. Recalling that $v$ is continuous in $\om$, we may pick a compact set $\om_\eta\Subset\om$ and $m_\eta>0$, such that $\Ln(\om\setminus\om_\eta)<\eta$ and $v(x)\geq m_\eta$ for every $x\in\om_\eta$. Up to subsequence that we don't rename, $v_k$ converges to $v$ a.e. in $\om$, and thus in $\om_\eta$. By the Egoroff theorem (see \cite[\S1.2]{EvG}) we have the existence of a measurable set $\om'\subset\om_\eta$ with $\Ln(\om')<\eta$ such that $v_k$ converges uniformly to $v$ on $\om_\eta\setminus\om'$. Since $v\geq m_\eta>0$ in $\om_\eta$ we deduce that for any $k$ large enough we have $v_k\geq0$ on $\om_\eta\setminus\om'$. Thus, $\om_k^-\subset\om'\cup(\om\setminus\om_\eta)$, which implies that $\Ln\big(\om_k^-\big)\leq2\eta$. Since $\eta>0$ is arbitrary, for $k$ large enough this contradicts our estimate $\Ln\big(\om_k^-\big)\geq C_1$. \end{proof}
We are now ready to prove the main result of this section. Extending the corresponding results in \cite{GMdL,PR}. We have
\begin{theorem}\label{theorem:GL}
Let $\om$ be a bounded domain,  and assume that $A$ is a uniformly elliptic, bounded matrix in $\om$, and $V\in M^q(p;\om)$. Consider the following assertions:
 \bea\nonumber
 &\alpha_1:&  Q'_{A,p.V}\mbox{ {\it satisfies the weak maximum principle in $\om$}}.
   \\ \nonumber
 &\alpha_2:& Q'_{A,p.V} \mbox{ {\it  satisfies the strong maximum principle in $\om$}}.
   \\ \nonumber
 &\alpha_3:& \lambda_1>0.
   \\ \nonumber
 &\alpha_4:& \mbox{{\it The equation $Q'_{A,p,V}[v]=0$ admits a positive strict supersolution in $\W(\om)$}}.
   \\ \nonumber
 &\alpha'_{4}:& \mbox{{\it The equation $Q'_{A,p,V}[v]=0$ admits a positive strict supersolution in  $W^{1,p}(\om)$}}.
   \\ \nonumber
 &\alpha_5:& \mbox{{\it For $0\leq g\in L^{p'}(\om)$, there exists a unique nonnegative solution in $\W(\om)$ of $Q'_{A,p,V}[v]=g$}}.
 \eea
Then $\alpha_1\Leftrightarrow\alpha_2\Leftrightarrow\alpha_3\Rightarrow\alpha_4\Rightarrow\alpha'_4$, and $\alpha_3\Rightarrow\alpha_5\Rightarrow\alpha_4$.
\end{theorem}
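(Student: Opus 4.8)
The plan is to establish the implications in a circuit, exploiting the principal eigenvalue machinery from Theorem~\ref{proposition:principal} and the solvability/coercivity results of \S\ref{subsec:prelims:eigenvalue}. I would first prove $\alpha_3\Rightarrow\alpha_5$: assuming $\lambda_1>0$, the functional $J[u]=Q_{A,p,V}[u;\om]-\int_\om gu\dx$ is, by the Rayleigh--Ritz characterization \eqref{first:eigenvalue}, coercive and weakly lower semicontinuous on $\W(\om)$ (Propositions~\ref{proposition:coercivity}-($b$) and \ref{proposition:wlsc}-($a$), with $\e$ any number in $(0,\lambda_1)$ after absorbing), so it attains a minimizer $v$, which is a weak solution of $Q'_{A,p,V}[v]=g$. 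Testing the equation with $v^-$ and using that $g\geq0$ together with Lemma~\ref{lemma:Kato} (or a direct computation) forces $v^-\equiv 0$, so $v\geq0$; uniqueness follows from Lemma~\ref{lemma:lind}-(iii) (with $g_1=g_2=g$, $V_1=V_2=V$, $h=0$) applied to two nonnegative solutions, since the right-hand side there forces $\nabla\log(v_1/v_2)=0$ and hence $v_1=cv_2$, and matching the equation fixes $c=1$ — one must first pass from $\W(\om)$ solutions to $W^{1,p}$ solutions with equal positive trace by adding a small constant or by a limiting argument, which is the delicate bookkeeping point here.

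Next, $\alpha_5\Rightarrow\alpha_4$: solve $Q'_{A,p,V}[v]=g$ for some fixed nonnegative, nontrivial $g\in L^{p'}(\om)$; the resulting nonnegative solution $v$ is by the strong maximum principle content (or directly by Harnack, Theorem~\ref{theorem:harnack}, plus H\"older continuity) strictly positive in $\om$, and it is a supersolution of $Q'_{A,p,V}[u]=0$ that is not a solution because testing against a suitable $\varphi\in\test(\om)$ with $\int g\varphi>0$ shows it fails the homogeneous equation; hence $v\in\W(\om)$ is a positive strict supersolution, giving $\alpha_4$. The implication $\alpha_4\Rightarrow\alpha_4'$ is immediate since $\W(\om)\subset W^{1,p}(\om)$.

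For the equivalences $\alpha_1\Leftrightarrow\alpha_2\Leftrightarrow\alpha_3$: the implication $\alpha_3\Rightarrow\alpha_1$ follows by testing $Q'_{A,p,V}[v]=g\ge0$ (for a solution $v$ with $v\geq0$ on $\partial\om$, i.e.\ $v^-\in\W(\om)$) against $v^-$ and invoking $\lambda_1>0$ through \eqref{first:eigenvalue} to get $\|v^-\|_{L^p}=0$. For $\alpha_1\Rightarrow\alpha_3$ I would argue by contradiction: if $\lambda_1\leq 0$, the principal eigenfunction $v_1>0$ satisfies $Q'_{A,p,V}[v_1]=\lambda_1 v_1^{p-1}\le 0$, so $-v_1$ solves an equation with nonpositive right-hand side and is nonnegative ($=0$) on $\partial\om$ but negative inside — violating the weak maximum principle as formulated (after a sign normalization; if $\lambda_1=0$ one perturbs $V$ slightly or uses strictness). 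Then $\alpha_3\Rightarrow\alpha_2$ combines $\alpha_3\Rightarrow\alpha_1$ with the Harnack inequality and local H\"older continuity (Theorem~\ref{theorem:harnack}, Remark~\ref{remark:regularity}): a nonnegative solution vanishing somewhere in the connected open set $\om$ must vanish identically by Harnack; $\alpha_2\Rightarrow\alpha_1$ is trivial.

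The main obstacle I anticipate is the uniqueness half of $\alpha_3\Rightarrow\alpha_5$ and the careful handling of boundary traces: Lemma~\ref{lemma:lind}-(iii) requires $w_1=w_2>0$ on $\partial\om$ in the trace sense and a Lipschitz domain, whereas here $\om$ is merely bounded and the two solutions vanish on $\partial\om$. The fix is to work with $w_i+h$ for $h>0$ (using part (i) of the lemma, which needs no boundary regularity) and let $h\to0$ via Fatou, exactly as in the proof of Lemma~\ref{lemma:lind}-(ii); one must check the requisite integrability of the cross terms using the Morrey--Adams theorem, but this is routine once the structure is set up. The second delicate point is the $\lambda_1=0$ borderline in $\alpha_1\Rightarrow\alpha_3$, handled by a small perturbation of the potential combined with the continuity of $\lambda_1$ under $M^q(p;\om)$-perturbations of $V$, which follows from the variational formula \eqref{first:eigenvalue} and the Morrey--Adams inequality.
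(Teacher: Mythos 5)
Your proposal follows essentially the same route as the paper: direct minimization via Propositions~\ref{proposition:wlsc} and \ref{proposition:coercivity} for the existence part of $\alpha_3\Rightarrow\alpha_5$, Lemma~\ref{lemma:lind}--(i) with $h>0$ followed by Fatou for uniqueness, the (weak) Harnack inequality for the maximum-principle equivalences, and the principal eigenfunction for the strict supersolution in $\alpha_3\Rightarrow\alpha_4$. Three small items to tidy, none of which is a genuine gap. First, in $\alpha_1\Rightarrow\alpha_3$ the sign is backwards: by oddness of the operator, $Q'_{A,p,V}[-v_1]=-\lambda_1 v_1^{p-1}\geq 0$ when $\lambda_1\leq 0$, so $-v_1$ solves \eqref{Q'=f:om} with a \emph{nonnegative} right-hand side, has trace $0$ on $\partial\om$, and is negative in $\om$ --- this already matches the paper's formulation of the WMP with no ``sign normalization'' needed. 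Second, there is no $\lambda_1=0$ borderline requiring a perturbation of $V$: when $\lambda_1=0$ the function $-v_1$ is a solution of the homogeneous equation ($g\equiv 0$, which is admissible in \eqref{Q'=f:om}), zero on the boundary and negative inside, so the WMP is violated just the same; the paper handles $\lambda_1\leq 0$ uniformly. Third, in $\alpha_3\Rightarrow\alpha_2$ (and likewise in $\alpha_5\Rightarrow\alpha_4$) a solution of $Q'_{A,p,V}[v]=g$ with $g\geq 0$ is only a \emph{supersolution} of the homogeneous equation, so for $p\leq n$ you must invoke the weak Harnack inequality of Remark~\ref{remark:harnack_supersol}, not Theorem~\ref{theorem:harnack}, to propagate a zero of $v$; this is exactly what the paper does. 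With these corrections the argument closes.
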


\begin{remark}\label{remark:theorem:GL}
In Corollary~\ref{cor_equiv} we prove (imposing stronger regularity assumptions on $A$ and $V$ when $p<2$) that in fact, $\alpha'_4\Rightarrow\alpha_3$. Hence, under these additional assumptions for $p<2$, all the above assertions are equivalent.
\end{remark}

\begin{proof} $\alpha_1\Rightarrow\alpha_2$. Let $v\in W^{1,p}(\om)$ be a solution of (\ref{Q'=f:om}) and suppose $v\geq0$ on $\partial\om$.  The nonnegativity of $g$ and the weak maximum principle implies that $v$ is a nonnegative supersolution of (\ref{Q'=0}) in $\om$. Suppose that for some $x_0,x_1\in\om$ we have $v(x_0)\neq0$ and $v(x_1)=0$ and let $\om'\Subset\om$ contain both $x_0$ and $x_1$. Recalling Remark \ref{remark:harnack_supersol}, we apply the weak Harnack inequality if $p\leq n$, or the Harnack inequality if $p>n$, to get $v\equiv0$ in $\om'$. This contradicts the assumption that $v(x_0)\neq0$. Thus, if $v\neq 0$ we necessarily have $v>0$ in $\om$.

\medskip

\noindent$\alpha_2\Rightarrow\alpha_3$. Suppose that  $\lambda_1\leq0$ and let $v\in\W(\om)$ be the corresponding principal eigenfunction. Then $u:=-v$ is a supersolution of the equation (\ref{Q'=0}) in $\om$, satisfying $u=0$ on $\partial\om$, and $u\neq 0$. By the strong maximum principle, $u$ is positive which is absurd.

\medskip

\noindent$\alpha_3\Rightarrow\alpha_1$. Let $v\in W^{1,p}(\om)$ be a solution of (\ref{Q'=f:om}) such that $v\geq0$ on $\partial\om$. Taking $v^-\in\W(\om)$ as a test function we see that
 \be\nonumber
  Q_{A,p,V}[v^-;\om]
   =
    \int_{\om^-}gv\dx,
 \ee
where $\om^-:=\{x\in\om~|~v<0\}$. The nonnegativity of $g$ gives $Q_{A,p,V}[v^-;\om]\leq0$, which implies that $\lambda_1\leq0$. Thus, we must have $v^-=0$ a.e. in $\om$, or in other words $v\geq0$ a.e. in $\om$.

\medskip

\noindent$\alpha_3\Rightarrow\alpha_4$. Since $\lambda_1>0$, it follows that the principal eigenfunction is a positive strict supersolution of the equation (\ref{Q'=0}) in $\om$.

\medskip

\noindent$\alpha_4\Rightarrow\alpha'_4$. This is trivial.

\medskip

\noindent$\alpha_3\Rightarrow\alpha_5$.  Consider the functional
$$J[u] := Q_{A,p,V}[u;\om]-\int_\om gu\dx \qquad u\in\W(\om).$$
By Proposition~\ref{proposition:wlsc}-($a$), $J$ is weakly lower semicontinuous in $\W(\om)$, and by Proposition~\ref{proposition:coercivity}-($b$), $J$ is coercive. Therefore, the corresponding Dirichlet problem admits a solution  $v_1\in\W(\om)$ (see for example, \cite[Theorem 1.2]{Str}). Since  $\alpha_3\Rightarrow\alpha_2$, this solution is either zero or strictly positive.

If $v_1=0$, then $g=0$, and by the uniqueness of the principal eigenvalue, equation (\ref{Q'=0}) in $\om$ does not admit a positive solution in $\W(\om)$. So, we may assume that $v_1>0$ and let $v_2\in\W(\om)$ be another positive solution. Applying Lemma~\ref{lemma:lind}-(i) with $g_1=g_2=g$ and $V_1=V_2=V$, we obtain
 \be\nonumber0\geq
  \int_\om g\Big(\frac{1}{{v^{p-1}_{1,h}}}-\frac{1}{{v^{p-1}_{2,h}}}\Big)(v^p_{1,h}-v^p_{2,h})\dx
   \geq
    \int_\om V\Big[\Big(\frac{v_1}{{v_{1,h}}}\Big)^{p-1}-\Big(\frac{v_2}{{v_{2,h}}}\Big)^{p-1}\Big](v^p_{1,h}-v^p_{2,h})\dx.
  \ee
The integrand of the integral on the right converges to $0$ a.e. in $\om$, and also it satisfies the following estimate for every $h<1$
 \be\nonumber
  \Big|V\Big[\Big(\frac{v_1}{v_{1,h}}\Big)^{p-1}-\Big(\frac{v_2}{v_{2,h}}\Big)^{p-1}\Big](v^p_{1,h}-v^p_{2,h})\Big|
   \leq
    2|V|[(v_1+1)^p+(v_2+1)^p]\in L^1(\om).
 \ee
Thus
 \be\nonumber
  \lim_{h\to0}\int_\om g\Big(\frac{1}{{v^{p-1}_{1,h}}}-\frac{1}{{v^{p-1}_{2,h}}}\Big)(v^p_{1,h}-v^p_{2,h})\dx
   =
    0,
 \ee
which together with Fatou's lemma imply that the right hand side of \eqref{Ln} equals zero. Thus,  $v_2=v_1$ a.e. in $\om$.

\medskip

\noindent$\alpha_5\Rightarrow\alpha_4$. Let $v\in\W(\om)$ be a positive solution of (\ref{Q'=f:om}) with $g\equiv1$. Then $v$ is readily a positive strict supersolution of (\ref{Q'=0}) in $\om$.
\end{proof}
\section{Positive global solutions}\label{sec:positivity}
In the present section we pass from local to global properties of positive solutions of the equation (\ref{Q'=0}) in $\Om$. In \S\ref{subsec:aap} we establish the AP theorem, while in \S\ref{subsec:criticality} we prove among other results the equivalence of the first four statements of the Main Theorem.
\subsection{The AP theorem}\label{subsec:aap}
In this subsection we prove the AP theorem for the operator $Q'_{A,p,V}$ under hypothesis (H0). We will add a couple of equivalent assertions to this theorem, regarding the following first-order equation
 \be\label{-divT+f(T)=V}
 -\Div_A T+(p-1)|T|_A^{p'}=V\qquad \mbox{in }\Om,
 \ee
where $\Div_AT = \Div(AT)$ and $T\in L^{p'}_{\rm loc}(\Om;\Rn)$; see \cite[Theorem 1.3]{JMV1} for a similar study when $A=I_n$, and $p=2$.
\begin{definition}\label{def.weak:sol:-divT+f(T)=V}
 Suppose the matrix $A$ satisfies ({\rm S}),~({\rm E}) and let $V\in L^1_{\rm loc}(\Om)$. A vector field $T\in L^{p'}_{\rm loc}(\Om;\Rn)$ is a {\em solution} of (\ref{-divT+f(T)=V}) in $\Om$ if
 \be\label{weak:sol:div}
  \int_\Om AT\cdot\nabla u\dx
   +(p-1)\int_\Om |T|_A^{p'}u\dx
    =\int_\Om Vu\dx
     \qquad \mbox{for all }u\in\test(\Om),
 \ee
and a {\em (super)subsolution} of (\ref{-divT+f(T)=V}) in $\Om$ if
 \be\label{weak:ssol:div}
  \int_\Om AT\cdot\nabla u\dx
   +(p-1)\int_\Om |T|_A^{p'}u\dx
    \;(\geq)\leq\int_\Om Vu\dx
     \qquad \mbox{for all nonnegative }u\in\test(\Om).
 \ee
\end{definition}

\begin{remark}
The additional assumption $V\in M^q_{\rm loc}(p;\Om)$ allows the replacement of $\test(\Om)$ in Definition~\ref{def.weak:sol:-divT+f(T)=V} by $\wtest(\Om)$.
\end{remark}

\begin{theorem}[The AP theorem]\label{theorem:AP} Under hypothesis (H0), the following assertions are equivalent:
 \bea\nonumber
 &\mathcal{A}_1:& Q_{A,p,V}[u]\geq0\mbox{ for all }u\in\test(\Om).
   \\ \nonumber
 &\mathcal{A}_2:& \mbox{$Q'_{A,p,V}[w]=0$ admits a positive solution }v\in W_{\rm loc}^{1,p}(\Om).
   \\ \nonumber
 &\mathcal{A}_3:& \mbox{$Q'_{A,p,V}[w]=0$ admits a positive supersolution }\tilde{v}\in W_{\rm loc}^{1,p}(\Om).
   \\ \nonumber
 &\mathcal{A}_4:& \mbox{(\ref{-divT+f(T)=V}) admits a solution }T\in L^{p'}_{\rm loc}(\Om;\Rn).
   \\ \nonumber
 &\mathcal{A}_5:& \mbox{(\ref{-divT+f(T)=V}) admits a subsolution }\tilde{T}\in L^{p'}_{\rm loc}(\Om;\Rn).
 \eea
\end{theorem}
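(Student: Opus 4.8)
The plan is to prove the cycle of implications $\mathcal{A}_1\Rightarrow\mathcal{A}_2\Rightarrow\mathcal{A}_3$, then $\mathcal{A}_2\Rightarrow\mathcal{A}_4\Rightarrow\mathcal{A}_5$, and close the loop with $\mathcal{A}_3\Rightarrow\mathcal{A}_1$ and $\mathcal{A}_5\Rightarrow\mathcal{A}_1$. The implications $\mathcal{A}_2\Rightarrow\mathcal{A}_3$ and $\mathcal{A}_4\Rightarrow\mathcal{A}_5$ are trivial (a solution is a supersolution, resp.\ a subsolution). The bridge between the two equations is the substitution $T:=-|\nabla v|_A^{p-2}\nabla v/v^{p-1}$: if $v$ is a positive solution of $Q'_{A,p,V}[v]=0$, then a direct computation (using $v\in\Wloc(\Om)$ together with the local boundedness and, for $p<2$, the $C^{1,\gamma'}_{\rm loc}$ regularity from Remark~\ref{remark:local grad bound}, plus the Harnack inequality to bound $v$ away from zero locally) shows $T\in L^{p'}_{\rm loc}(\Om;\Rn)$ and that $T$ solves \eqref{-divT+f(T)=V}; this gives $\mathcal{A}_2\Rightarrow\mathcal{A}_4$. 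For $\mathcal{A}_5\Rightarrow\mathcal{A}_1$ one tests the subsolution inequality \eqref{weak:ssol:div} against $u=|\phi|^p$ for $\phi\in\test(\Om)$, uses $\nabla(|\phi|^p)=p|\phi|^{p-2}\phi\nabla\phi$, and estimates by Young's inequality in the form $p|\phi|^{p-1}|\nabla\phi|_A|T|_A\le (p-1)|T|_A^{p'}|\phi|^p+|\nabla\phi|_A^p$ (the $A$-weighted analogue of the classical computation), to obtain $\int_\Om V|\phi|^p\dx\le\int_\Om|\nabla\phi|_A^p\dx$, i.e.\ $\mathcal{A}_1$. The implication $\mathcal{A}_3\Rightarrow\mathcal{A}_1$ is similar: given a positive supersolution $\tilde v$, test \eqref{weak:ssol:Q} with $u=|\phi|^p/\tilde v^{p-1}$ (truncating $\tilde v$ by $\tilde v+\epsilon$ to make the test function admissible, then letting $\epsilon\to0$ via dominated convergence as in the proof of $\alpha_3\Rightarrow\alpha_5$ in Theorem~\ref{theorem:GL}), and invoke the pointwise inequality \eqref{vecineqLindqvist} of Lindqvist (with $\alpha=\nabla\phi/\phi$, $\beta=\nabla\tilde v/\tilde v$ suitably normalized) to discard the nonnegative remainder.

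The substantive implication is $\mathcal{A}_1\Rightarrow\mathcal{A}_2$. The plan is the standard exhaustion argument adapted to our low-regularity setting. Fix an exhaustion $\{\om_i\}$ of $\Om$ by smooth bounded domains with $\om_i\Subset\om_{i+1}$, $\cup_i\om_i=\Om$, and a reference point $x_0\in\om_1$. On each $\om_i$ the nonnegativity of $Q_{A,p,V}$ on $\test(\Om)$ forces the principal eigenvalue $\lambda_1(Q_{A,p,V};\om_i)\ge0$. Since $\om_i\subsetneq\Om$, one can find a slightly larger domain still inside $\Om$ and use strict monotonicity of the principal eigenvalue with respect to the domain to get $\lambda_1(Q_{A,p,V};\om_i)>0$; alternatively, pass directly to the perturbed functional $Q_{A,p,V+1/i}$ (or $Q_{A,p,V}$ with a spectral shift $\lambda_1+\tfrac1i$) so that by Theorem~\ref{theorem:GL} ($\alpha_3\Rightarrow\alpha_5$) the Dirichlet problem $Q'_{A_i,p,V_i}[v_i]=f_i$ in $\om_i$, $v_i\in\W(\om_i)$, has a positive solution for a suitable nonnegative $f_i\in L^{p'}(\om_i)$ with $f_i\not\equiv0$; then $v_i$ is a positive supersolution, hence by the Harnack inequality (Theorem~\ref{theorem:harnack}) strictly positive, and we normalize $v_i(x_0)=1$. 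Actually the cleanest route is to take $v_i$ to be the positive solution of $Q'_{A,p,V}[v]=0$ in $\om_i$ with $v_i=1$ (or a positive constant) on $\partial\om_i$, obtained from $\alpha_3\Rightarrow\alpha_5$ applied after a spectral shift and then removing the shift in the limit — here I would follow the bounded-domain machinery of \S\ref{sec:eigenvalue} exactly as in \cite{PR,PT1}.

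Once the sequence $\{v_i\}$ of positive solutions in $\om_i$ with $v_i(x_0)=1$ is in hand, the Harnack convergence principle (Proposition~\ref{Harnack:cp}), applied with $A_i\equiv A$ and $\V_i\equiv V$, yields a subsequence converging in $C^{0,\beta}_{\rm loc}(\Om)$ and weakly in $\Wloc(\Om)$ to a positive solution $v$ of $Q'_{A,p,V}[v]=0$ in $\Om$, which is exactly $\mathcal{A}_2$. The main obstacle — and the reason this theorem needs a from-scratch treatment rather than an approximation of $V$ by smooth potentials — is producing the positive solutions $v_i$ on the bounded pieces under the Morrey-only regularity of $V$: one must run the variational solvability of the Dirichlet problem (weak lower semicontinuity and coercivity, Propositions~\ref{proposition:wlsc} and \ref{proposition:coercivity}) and the maximum-principle package of Theorem~\ref{theorem:GL}, all of which rest on the Morrey--Adams inequality \eqref{ineq:uncertainty}, and then pass to the limit using the Harnack and local H\"older estimates that hold precisely because $V\in M^q_{\rm loc}(p;\Om)$. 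The $p<2$ case requires hypothesis (H1) only to invoke the $C^{1,\gamma'}_{\rm loc}$ regularity when verifying $T\in L^{p'}_{\rm loc}$ in $\mathcal{A}_2\Rightarrow\mathcal{A}_4$ and when handling gradients of solutions; elsewhere the argument is uniform in $p\in(1,\infty)$.
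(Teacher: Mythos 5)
Your overall skeleton matches the paper's: you prove $\mathcal{A}_1\Rightarrow\mathcal{A}_2$ by exhaustion plus the Harnack convergence principle, pass from positive (super)solutions $v$ to $T$ by the substitution $T=-|\nabla\log v|_A^{p-2}\nabla\log v$, and close the loop via Young's inequality from $\mathcal{A}_5$. Two remarks on the route you take and one substantive objection.

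On the chain of implications: you close via $\mathcal{A}_3\Rightarrow\mathcal{A}_1$ using a Picone-type argument with the Lindqvist inequality \eqref{vecineqLindqvist}, whereas the paper proves $\mathcal{A}_3\Rightarrow\mathcal{A}_5$ directly (the same substitution $T=-|\nabla\log\tilde v|_A^{p-2}\nabla\log\tilde v$ turns a supersolution of \eqref{Q'=0} into a subsolution of \eqref{-divT+f(T)=V}) and then $\mathcal{A}_5\Rightarrow\mathcal{A}_1$. Both are correct; the paper's route is marginally more economical because $\mathcal{A}_2\Rightarrow\mathcal{A}_4$ and $\mathcal{A}_3\Rightarrow\mathcal{A}_5$ share the same computation, whereas you run a separate truncation-and-limit argument for $\mathcal{A}_3\Rightarrow\mathcal{A}_1$. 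Also, your ``cleanest route'' for $\mathcal{A}_1\Rightarrow\mathcal{A}_2$ (solving the homogeneous equation with constant Dirichlet data $v_i=1$ on $\partial\om_i$) is not what the paper's bounded-domain theory actually furnishes: Theorem~\ref{theorem:GL} ($\alpha_3\Rightarrow\alpha_5$) solves $Q'_{A,p,V}[v]=g$, $v\in\W(\om)$, with \emph{zero} Dirichlet data and $0\le g\in L^{p'}(\om)$. The paper instead takes $f_i\in\test(\om_i\setminus\overline{\om}_{i-1})\setminus\{0\}$ so that $v_i$ solves the \emph{homogeneous} equation $Q'_{A,p,V+1/i}[v]=0$ in $\om_{i-1}$ — that localization is the point, and you should adopt it rather than the inhomogeneous-boundary-data variant.

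The genuine gap is your claim that hypothesis (H1), via the $C^{1,\gamma'}_{\rm loc}$ regularity of Remark~\ref{remark:local grad bound}, is needed to carry out $\mathcal{A}_2\Rightarrow\mathcal{A}_4$ when $p<2$. The AP theorem is stated (and proved in the paper) under (H0) alone, for \emph{all} $p\in(1,\infty)$, and no pointwise gradient regularity is used. Indeed, writing $T=-|\nabla\log v|_A^{p-2}\nabla\log v$, one has $|T|_A^{p'}=|\nabla\log v|_A^p=|\nabla v|_A^p/v^p$; since $v\in\Wloc(\Om)$, $A\in L^\infty_{\rm loc}$ is uniformly elliptic locally, and (by the weak Harnack inequality, Remark~\ref{remark:harnack_supersol}) $1/v\in L^\infty_{\rm loc}(\Om)$, it follows that $T\in L^{p'}_{\rm loc}(\Om;\Rn)$ without any smoothness of $\nabla v$. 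The paper's argument actually goes further: it tests \eqref{weak:ssol:Q} with $|u|^pv^{1-p}$ and applies Young's inequality with parameter $\eta\in(0,p-1)$ to obtain
\[
(p-1-\eta)\int_\Om|T|_A^{p'}|u|^p\dx\le\Big(\frac{p-1}{\eta}\Big)^{p-1}\int_\Om|\nabla u|_A^p\dx+\int_\Om|V||u|^p\dx,
\]
and then invokes the Morrey--Adams inequality. Similarly, the verification that $T$ solves \eqref{-divT+f(T)=V} is a pure weak-formulation computation: plug $uv^{1-p}\in\wtest(\Om)$ into \eqref{weak:sol:Q} and rearrange. Nowhere is $\nabla v$ differentiated or required to be bounded. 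So your extra hypothesis is not merely a convenience — it would weaken the theorem you claim to prove in the range $1<p<2$, and it should be removed.
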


\begin{proof} We prove $\mathcal{A}_1\Rightarrow\mathcal{A}_2\Rightarrow\mathcal{A}_j\Rightarrow\mathcal{A}_5\Rightarrow\mathcal{A}_1$, where $j=3,4$.

\medskip

\noindent$\mathcal{A}_1\Rightarrow\mathcal{A}_2$. We fix a point $x_0\in\Om$ and let $\{\om_i\}_{i\in\N}$ be a sequence of Lipschitz domains such that $x_0\in\om_1$, $\om_i\Subset\om_{i+1}\Subset\Om$,  $i\in\N$, and $\cup_{i\in\N}\om_i=\Om$. For  $i\geq 2$, we consider the problem
 \be\label{ap:Q'=f_i}
\left\{
  \begin{array}{ll}
    Q'_{A,p,V+1/i}[u]=f_i & \mbox{in } \om_i, \\[2mm]
     u=0 & \mbox{on }\partial\om_i,
  \end{array}
\right.
 \ee
where $f_i\in\test(\om_i\setminus\overline{\om}_{i-1})\setminus\{0\}$ are nonnegative. Assertion $\mathcal{A}_1$ implies
 \be\nonumber
  \lambda_1(Q_{A,p,V+1/i};\om_i)\geq\frac{1}{i}\qquad \mbox{for all }i\in\N,
 \ee
so that by Theorem~\ref{theorem:GL} there exists a positive solution $v_i\in\W(\om_i)$ of (\ref{ap:Q'=f_i}). Since $\sprt\{f_i\}\subset\om_i\setminus\overline{\om}_{i-1}$, setting $\om_i'=\om_{i-1}$, we have
 \be\label{ap:Q'=0_i}
  \int_{\om_i}|\nabla v_i|^{p-2}_AA\nabla v_i\cdot\nabla u\dx+\int_{\om_i}(V+1/i)v_i^{p-1}u\dx=0
   \qquad \mbox{for all }u\in\W(\om_i').
 \ee
By Theorem~\ref{thrm:regularity}, the solutions $v_i$ we have obtained are continuous. We may thus normalize $f_i$ so that $v_i(x_0)=1$ for all $i\in\N$. To arrive to the desired conclusion we apply the Harnack convergence principle (Proposition \ref{Harnack:cp}) with $\V_i:=V+1/i.$

\medskip

\noindent$\mathcal{A}_2\Rightarrow\mathcal{A}_3$. This is immediate with $\tilde{v}=v$.

\medskip

\noindent$\mathcal{A}_2\Rightarrow\mathcal{A}_4$ and  $\mathcal{A}_3\Rightarrow\mathcal{A}_5$. Let $v$ be a positive (super)solution of (\ref{Q'=0}). By the weak Harnack inequality (Remark \ref{remark:harnack_supersol}) in case $p\leq n$, or by the Harnack inequality if $p>n$, we have $1/v\in L^\infty_{\rm loc}(\Om)$. Set

 \be\nonumber
  T
   :=
    -|\nabla\log v|_A^{p-2}\nabla\log v,
 \ee
and let $u\in\test(\Om)$. We may thus pick $|u|^pv^{1-p}\in\wtest(\Om)$ as a test function in (\ref{weak:ssol:Q}) to get
 \be\label{test1}
  (p-1)\int_\Om|T|_A^{p'}|u|^p\dx
   \leq
    p\int_\Om|T|_A|u|^{p-1}|\nabla u|_A\dx
     +
      \int_\Om V|u|^p\dx,
 \ee
Note that from (\ref{test1}) we obtain $\mathcal{A}_1$ just by using Young's inequality $pab\leq(p-1)a^{p'}+b^p$ with $a=|T|_A|u|^{p-1}$ and $b=|\nabla u|_A$ in the first term of the right hand side. Towards $\mathcal{A}_3$, we use instead Young's inequality
 \be\label{epsilon:Young}
  p a b
   \leq
    \eta a^{p'}+\Big(\frac{p-1}{\eta}\Big)^{p-1}b^p,
 \ee
with $\eta\in(0,p-1)$ and the above $a,b$. We arrive at
 \bea\nonumber
  (p-1-\eta)\int_\Om|T|_A^{p'}|u|^p\dx
   \leq
    \Big(\frac{p-1}{\eta}\Big)^{p-1}\int_\Om|\nabla u|_A^p\dx
     +
      \int_\Om|V||u|^p\dx.
 \eea
This, together with $({\rm E})$ and Theorem~\ref{theorem:uncertainty} imply by specializing $u$ that $T\in L^{p'}_{\rm loc}(\Om;\Rn)$. Next we show that $T$ is a (sub)solution of (\ref{-divT+f(T)=V}). To this end, for $u\in\test(\Om)$, or for nonnegative $u\in\test(\Om)$, we pick $uv^{1-p}\in\wtest(\Om)$ as a test function in (\ref{weak:sol:Q}), or (\ref{weak:ssol:Q}) respectively, to obtain
 \be\nonumber
  -\int_\Om AT\cdot\nabla u\dx
   -(p-1)\int_\Om|T|_A^{p'}u\dx
    +\int_\Om Vu\dx
     \;(\geq)=0.
 \ee

\medskip

\noindent$\mathcal{A}_4\Rightarrow\mathcal{A}_5$. This is immediate with $\tilde{T}=T$.

\medskip

\noindent$\mathcal{A}_5\Rightarrow\mathcal{A}_1$. Suppose now that $T\in L^{p'}_{\rm loc}(\Om;\Rn)$ and let $u\in\test(\Om)$. We compute
 \bea\nonumber & &
  -\int_\Om AT\cdot\nabla(|u|^p)\dx
   =
    -p\int_\Om|u|^{p-1}AT\cdot\nabla|u|\dx
     \\ \nonumber & & \hspace{9.2em} \leq
      p\int_\Om|u|^{p-1}|T|_A|\nabla u|_A\dx
       \\ \nonumber & & \hspace{9.2em} \leq
        (p-1)\int_\Om|u|^p|T|_A^{p'}\dx+\int_\Om|\nabla u|_A^p\dx,
 \eea
where we have also used Young's inequality $pab\leq(p-1)a^{p'}+b^p$, with $a=|u|^{p-1}|T|_A$ and $b=|\nabla u|_A$. This readily implies
 \be\label{Harell}
  \int_\Om|\nabla u|_A^p\dx
   \geq
    -\int_\Om AT\cdot\nabla(|u|^p)\dx - (p-1)\int_\Om |T|_A^{p'}|u|^p\dx
     \qquad \mbox{ for all }u\in\test(\Om).
 \ee
If $T$ is a subsolution of (\ref{-divT+f(T)=V}), then testing (\ref{weak:ssol:div}) by $|u|^p$, one readily sees from (\ref{Harell}) that $Q_{A,p,V}[u]$ is nonnegative for any $u\in\test(\Om)$.
\end{proof}

\begin{remark} Inequality (\ref{Harell}) with $A=I_n$ has been obtained in \cite{FHdTh}.
\end{remark}
\subsection{Criticality theory}\label{subsec:criticality}
In the present subsection we generalize several global positivity properties of the functional $Q_{A,p,V},$ where $A$ and $V$ satisfy (at least) our regularity assumption (H0). For the convenience of the reader, we recall the following terminology.

\begin{definition}\label{def:critical}  Assume that $Q_{A,p,V}$ is {\em nonnegative} in $\Om$ (that is, $Q_{A,p,V}[u]\geq0$ for all $u\in\test(\Om)$) with coefficients satisfying hypothesis {\em(H0)}. Then $Q_{A,p,V}$ is called {\em subcritical} in $\Om$ if there exists a nonnegative weight function $W\in  M^q_{\rm loc}(p;\Om)\setminus\{0\}$ such that
 \be\label{ineq:subcriticality}
  Q_{A,p,V}[u]
   \geq
    \intom W|u|^p\dx\qquad \mbox{for all }u\in\test(\Om).
 \ee
If this is not the case, then $Q_{A,p,V}$ is called {\em critical} in $\Om$.

The functional $Q_{A,p,V}$ is called {\em supercritical} in $\Om$ if $Q_{A,p,V}$ is not nonnegative in $\Om$ (that is, there exists $u\in\test(\Om)$ such that $Q_{A,p,V}[u]<0$).
\end{definition}

\begin{definition}\label{def:nullseq} A sequence $\{u_k\}\subset\wdtest(\Om)$ is called a {\em null sequence} with respect to the nonnegative functional $Q_{A,p,V}$ in $\Om$ if
\vspace{.5em}

 a) $u_k\geq0$ for all $k\in\mathbb{N}$,
\vspace{.5em}

 b) there exists a fixed open set $K\Subset\Om$ such that $\|u_k\|_{L^p(K)}=1$ for all $k\in\mathbb{N}$,
\vspace{.5em}

 c) $\displaystyle{\lim_{k\to\infty}Q_{A,p,V}[u_k]=0}$.
\vspace{.5em}

\noindent We call a positive $\phi\in W_{\rm loc}^{1,p}(\Om)$ a {\em ground state} of $Q_{A,p,V}$ in $\Om$ if $\phi$ is an $L_{\rm loc}^p(\Om)$ limit of a null sequence.
\end{definition}

\begin{remark}\label{rem_pe_gs}
Let  $\om\subset\Rn$ be a bounded domain, and suppose that $A$ is uniformly elliptic and bounded matrix in $\om$, and $V\in M^q(p;\om)$. Let $v_1$ be the principal eigenfunction with eigenvalue $\lambda_1$.  Set $C_K:= \|v_1\|_{L^p(K)}$, where  $K\Subset\Om$ is fixed. Then the constant sequence $\{C_K^{-1}v_1\}$ is a null sequence and $C_K^{-1}v_1$ is a ground state of $Q_{A,p,V-\lambda_1}$ in $\om$.
\end{remark}

The following proposition states an elementary positivity property of the functional $Q_{A,p,V}$.

\begin{proposition}\label{proposition:superpotential}
Suppose that $V_2\geq V_1$ a.e. in $\Om$ and  $\Ln\big(\{V_2>V_1\}\big)>0$.
\vspace{.5em}

a) If $Q_{A,p,V_1}$ is nonnegative in $\Om$, then $Q_{A,p,V_2}$ is subcritical in $\Om$.
\vspace{.5em}

b) If $Q_{A,p,V_2}$ is critical in $\Om$, then $Q_{A,p,V_1}$ is supercritical in $\Om$.
\end{proposition}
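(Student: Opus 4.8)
The plan is to exploit that the map $V\mapsto Q_{A,p,V}$ is affine in the potential slot: for every $u\in\test(\Om)$ one has $Q_{A,p,V_2}[u]=Q_{A,p,V_1}[u]+\intom(V_2-V_1)|u|^p\dx$. Everything follows from this identity together with the definitions.

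For part (a), I would take the candidate weight to be $W:=V_2-V_1$ and check that it is admissible in the sense of Definition~\ref{def:critical}. It is nonnegative a.e. in $\Om$ by hypothesis, it is not identically zero since $\Ln(\{V_2>V_1\})>0$, and it belongs to $M^q_{\rm loc}(p;\Om)$ because that space is a vector space (in each of the three cases — equal to $M^q_{\rm loc}(\Om)$, to $L^1_{\rm loc}(\Om)$, or to the $p=n$ space defined via $\varphi_q$ — the defining quantity is subadditive), hence stable under differences of its members $V_1,V_2$. Then, for any $u\in\test(\Om)$,
\[
Q_{A,p,V_2}[u]=Q_{A,p,V_1}[u]+\intom W|u|^p\dx\geq\intom W|u|^p\dx,
\]
the last inequality being the assumed nonnegativity of $Q_{A,p,V_1}$ on $\test(\Om)$. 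This is precisely \eqref{ineq:subcriticality}, so $Q_{A,p,V_2}$ is subcritical in $\Om$.

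For part (b), I would argue by contradiction, using that any functional is either nonnegative on $\test(\Om)$ — and then subcritical or critical — or else supercritical. If $Q_{A,p,V_1}$ were not supercritical in $\Om$, it would be nonnegative there; part (a) would then force $Q_{A,p,V_2}$ to be subcritical, contradicting the hypothesis that $Q_{A,p,V_2}$ is critical. Hence $Q_{A,p,V_1}$ is supercritical in $\Om$.

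I do not anticipate any genuine analytic obstacle here: the statement is a bookkeeping consequence of the affine dependence of $Q_{A,p,V}$ on $V$ and of the definitions of sub/critical/supercritical. The only points requiring a routine verification are that the admissible class of weights $M^q_{\rm loc}(p;\Om)$ is closed under subtraction, so that $W=V_2-V_1$ is a legitimate weight in \eqref{ineq:subcriticality}, and that ``supercritical'' is exactly the negation of ``nonnegative on $\test(\Om)$'', which is what makes the dichotomy argument in part (b) go through.
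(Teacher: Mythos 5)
Your proof is correct and follows essentially the same route as the paper: the affine decomposition $Q_{A,p,V_2}[u]=Q_{A,p,V_1}[u]+\intom(V_2-V_1)|u|^p\dx$ gives part (a) directly with $W=V_2-V_1$, and part (b) is deduced from (a) by contradiction. The extra verification that $W\in M^q_{\rm loc}(p;\Om)$ is a sensible routine check that the paper leaves implicit.
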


\begin{proof} Part \textit{b)} follows from part \textit{a)} by contradiction, and from the obvious relation
\[
  Q_{A,p,V_2}[u]=Q_{A,p,V_1}[u]+\int_\Om(V_2-V_1)|u|^p\dx \qquad \mbox{for all }u\in\test(\Om),
\]
part \textit{a)} evident.
\end{proof}
Note here that definitions \ref{def:critical} and \ref{def:nullseq}, and also Proposition~\ref{proposition:superpotential} make perfect sense if $V$ is merely in $L^1_{\rm loc}(\Om)$ for all values of $p$.

Now we connect the criticality/subcriticality of the functional $Q_{A,p,V}$ in $\Om$ with the existence of positive weak (super)solutions problem for equation (\ref{Q'=0}) in $\Om$, through the existence of ground states. Towards this we need to give sufficient conditions on $A$ and $V$, under which a null sequence with respect to the nonnegative functional $Q_{A,p,V}$, will converge in $L^p_{\rm loc}$ to a function in $W_{\rm loc}^{1,p}$.

We need the following definition for the case $1<p<2$.
\begin{definition}
Suppose that $1<p<2$. A positive supersolution $v$ of (\ref{Q'=0}) will be called {\em regular} provided that $v$ and $|\nabla v|$ are locally bounded a.e. in $\Om$.
\end{definition}

\begin{remark} Under hypothesis (H1) for $1<p<2$, any positive supersolution $v$ of (\ref{Q'=0}) satisfying $Q_{A,p,V}[v]=g\geq 0 $ with $g\in L^{p'}_{\mathrm{loc}}(\Om)$ is regular (see Remark \ref{remark:local grad bound}).
\end{remark}

We start with the following proposition that gives us the intuition that any null sequence converges in some sense to {\em any} positive (regular if $p<2$) (super)solution. Note that our proof for the case $p<2$ is considerably shorter than the corresponding proof in \cite{PT1} and \cite{PR}.
\begin{proposition}\label{proposition:W1ploc:boundedness}
Suppose that $\{u_k\}\subset\wdtest(\Om)$ is a null sequence with respect to a nonnegative functional $Q_{A,p,V}$ in $\Om$ with coefficients satisfying hypothesis {\em(H0)}.

Let $v$ be a positive supersolution  of the equation (\ref{Q'=0}) in $\Om$. In case $1<p<2$ we assume further that $v$ is regular. Set $w_k:=u_k/v$. Then $\{w_k\}$ is bounded in $W_{\rm loc}^{1,p}(\Om)$, and  $\nabla w_k\rightarrow0$ in $L^p_{\rm loc}(\Om;\Rn)$.
\end{proposition}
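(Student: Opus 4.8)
The plan is to test the supersolution inequality for $v$ with a carefully chosen test function built from the null sequence, extract a Caccioppoli-type bound on $\nabla w_k$, and then identify the limit using the null-sequence property together with the generalized Lindqvist inequality \eqref{vecineqLindqvist}. First I would record the algebraic identity that for $w = u/v$ one has $\nabla u = v\nabla w + w\nabla v$, so that $|\nabla u|_A^p$ expands into a term comparable to $v^p|\nabla w|_A^p$ plus cross terms. The key device is to insert the test function $\varphi_k := w_k^{p-1} v^{1-p} u_k = w_k^p v$ (or a suitably mollified/truncated version, noting $u_k$ has compact support and $v, 1/v \in L^\infty_{\rm loc}$ by Harnack, and $|\nabla v| \in L^\infty_{\rm loc}$ when $p<2$ so that $\varphi_k \in \wtest(\Om)$) into the supersolution inequality \eqref{weak:ssol:Q} for $v$. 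After expanding $\nabla\varphi_k$ and using the ellipticity ${\rm (E)}$, this yields an inequality of the form
\[
 \int_\Om v^p \Big(|\nabla\log v|_A^{p-2}A\nabla\log v\cdot\text{(stuff)}\Big)\dx + \int_\Om V u_k^p\,\dx \;(\ge 0),
\]
which, when combined with $Q_{A,p,V}[u_k] = \int_\Om |\nabla u_k|_A^p\,\dx + \int_\Om V|u_k|^p\,\dx \to 0$, lets us cancel the potential terms and isolate a nonnegative quantity controlling $\int_\Om v^p \Phi_p(\nabla\log w_k)\,\dx$, where $\Phi_p$ is the left-hand side of \eqref{vecineqLindqvist} with $\alpha = \nabla\log u_k$, $\beta = \nabla\log v$ — i.e. exactly the structure that Lemma~\ref{lemma:lind}(i) exploits.

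Next I would invoke the pointwise inequality \eqref{vecineqLindqvist}: for $p\ge 2$ it gives $\Phi_p(\alpha,\beta) \ge C(p)|\alpha-\beta|_A^p$, and since $v^p|\nabla\log w_k|_A^p = |v\nabla w_k / w_k|_A^p$ relates $\nabla w_k$ to $\nabla\log w_k$, one gets $\int_{K'} |\nabla w_k|^p\,\dx \le o(1) + (\text{bounded terms})$ on any $K'\Subset\Om$, using that $u_k/v$ and $v/u_k$ are locally controlled and that $\|u_k\|_{L^p(K)} = 1$ propagates to local $L^p$ bounds on $w_k$ via Harnack. For $1<p<2$ the inequality \eqref{vecineqLindqvist} degenerates with the weight $(|\alpha|_A + |\beta|_A)^{p-2}$; here the regularity assumption on $v$ (local boundedness of $|\nabla v|$, hence of $|\nabla\log v|_A$) is what makes the argument work — one uses a reverse Hölder / interpolation trick (as in \cite{PT1,PR}, but shortened because $|\nabla v|$ is already bounded) to pass from the weighted quantity back to $\int |\nabla w_k|^p$. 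This gives the uniform $W^{1,p}_{\rm loc}$ bound on $\{w_k\}$.

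Finally, to get $\nabla w_k \to 0$ in $L^p_{\rm loc}$, I would return to the inequality isolating $\int_\Om v^p \Phi_p(\nabla\log w_k)\,\dx$: the right-hand side is $Q_{A,p,V}[u_k]$ minus a nonnegative boundary-type term, hence $\le Q_{A,p,V}[u_k] \to 0$, so $\int_{K'} v^p\,\Phi_p(\nabla\log w_k)\,\dx \to 0$ for every $K'\Subset\Om$. For $p\ge 2$ this directly forces $\int_{K'}|\nabla\log w_k|_A^p\,\dx\to 0$ and then $\int_{K'}|\nabla w_k|^p\,\dx\to 0$ after multiplying by the locally bounded $v^p/w_k^p$ (wait — more carefully, $v\nabla w_k = w_k(v\nabla\log w_k)$, and $w_k$ is locally $L^p$-bounded with $v,1/v$ locally bounded, so an application of Hölder with the already-established $W^{1,p}_{\rm loc}$ bound on $w_k$ closes it). For $p<2$ one splits $K'$ according to whether $|\nabla\log u_k|_A + |\nabla\log v|_A$ is large or small and uses the weighted smallness together with the $W^{1,p}_{\rm loc}$ bound and Hölder's inequality, exactly in the spirit of the null-sequence arguments in \cite{PT1,PR}. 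The main obstacle is the case $1<p<2$: making the degenerate weight $(|\alpha|_A+|\beta|_A)^{p-2}$ harmless requires the local gradient bound on $v$ and a Hölder-interpolation step to recover an unweighted $L^p$ estimate on $\nabla w_k$ — this is the only place where hypothesis (H1) and the regularity of $v$ are genuinely needed, and it is where one must be careful that all test functions actually lie in $\wtest(\Om)$.
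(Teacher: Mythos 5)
Your overall plan is the one the paper actually uses: test the supersolution inequality for $v$ with $\varphi_k = w_k^p v$, use the pointwise Lindqvist-type inequality \eqref{vecineqLindqvist}, and split the cases $p\geq 2$ and $1<p<2$ with the $p<2$ case handled by a H\"older/interpolation step made possible by the regularity of $v$. However, there is a bookkeeping slip that propagates through your argument and makes the final step harder than it needs to be. The paper applies \eqref{vecineqLindqvist} with $\alpha=\nabla u_k=\nabla(w_kv)$ and $\beta = w_k\nabla v$, so $\alpha-\beta = v\nabla w_k$ and the quantity isolated after testing is $\int_\Om\Phi_p(\nabla u_k, w_k\nabla v)\,\dx$, which is $\leq Q_{A,p,V}[u_k]$ and, for $p\geq 2$, is $\geq C(p)\int v^p|\nabla w_k|_A^p\,\dx$. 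No logarithms appear, which matters because $u_k\in W^{1,p}_c(\Om)$ vanishes on an open set and $\nabla\log u_k$, $\nabla\log w_k$ are not globally defined. Your version works with the log variables and asserts that what is controlled is $\int_\Om v^p\,\Phi_p(\nabla\log u_k,\nabla\log v)\,\dx$. By $p$-homogeneity of $\Phi_p$ this is not the weight the test function $w_k^pv$ produces: one has $\Phi_p(\nabla u_k, w_k\nabla v) = u_k^p\,\Phi_p(\nabla\log u_k,\nabla\log v)$ on $\{u_k>0\}$, so the honest weight is $u_k^p$, not $v^p$. With the correct weight, $u_k^p|\nabla\log w_k|_A^p = v^p|\nabla w_k|_A^p$, and for $p\geq2$ the conclusion $\int_{\om}|\nabla w_k|^p\,\dx\to 0$ is immediate from $Q_{A,p,V}[u_k]\to 0$, ellipticity, and the local lower bound on $v$ from the weak Harnack inequality. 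With your weight you are forced into the parenthetical ``wait --- more carefully'' step, where converting from $\nabla\log w_k$ to $\nabla w_k$ requires $w_k\in L^\infty_{\rm loc}$ uniformly, which you do not have (you only have the $W^{1,p}_{\rm loc}$ bound, and H\"older does not close it since $L^p\cdot L^p\not\subset L^1$). Fix the weight to $u_k^p$ (equivalently, drop the logarithms and use $\alpha=\nabla u_k$, $\beta=w_k\nabla v$ as the paper does) and your argument recovers the paper's proof, including the $L^p$ bound on $w_k$ itself via Poincar\'e--Minkowski on a set $\om\supset K$ rather than ``propagation via Harnack'' which is a red herring; Harnack enters only to bound $1/v$ locally.
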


\begin{proof} Let $K\Subset \Om$ be the set such that the null sequence $\{u_k\}$ satisfies $\|u_k\|_{L^p(K)}=1$ for all $k\in\mathbb{N}$. Fix a Lipschitz domain $\om$ such that  $K\Subset \om\Subset\Om$.

By Minkowski and Poincar\'{e} inequalities, and the weak Harnack inequality, we have
 \bea\nonumber & &
  \|w_k\|_{L^p(\om)}
   \leq
    \|w_k-\langle w_k\rangle_K\|_{L^p(\om)}+\langle w_k\rangle_K[\Ln(\om)]^{1/p}
     \\ \nonumber & & \hspace{4.6em} \leq
      C(n,p,\om,K)\|\nabla w_k\|_{L^p(\om;\Rn)}+\frac{1}{\inf_{K}v}\langle u_k\rangle_K[\Ln(\om)]^{1/p}.
 \eea
Since $\|u_k\|_{L^p(K)}=1$, applying Holder's inequality we deduce
 \be\label{Lpwkestimate}
  \|w_k\|_{L^p(\om)}
   \leq
    C(n,p,\om,K)\|\nabla w_k\|_{L^p(\om;\Rn)}+\frac{1}{\inf_{K}v}\Big[\frac{\Ln(\om)}{\Ln(K)}\Big]^{1/p}.
 \ee
Let
\[I(v,w_k):=C(p)\left\{
              \begin{array}{ll}
               \displaystyle{\int_\Om v^p|\nabla w_k|_A^p\dx} & p\geq 2,
               \\[6mm]
                \displaystyle{\int_\Om |\nabla w_k|_A^2\Big(|\nabla(vw_k)|_A+w_k|\nabla v|_A\Big)^{p-2}\!\!\!\!\dx} & 1\leq p <2,
              \end{array}
            \right.
\]
where $C(p)$ is the constant in \eqref{vecineqLindqvist}. We now use \eqref{vecineqLindqvist} with $\alpha=\nabla(w_kv)=\nabla u_k,~\beta=w_k\nabla v$ to obtain
\bea&  &
 I(v,w_k)
  \leq
   \intom|\nabla u_k|_A^p\dx-\intom w_k^p|\nabla v|_A^p\dx-\intom v|\nabla v|_A^{p-2}A\nabla v\cdot\nabla (w_k^p)\dx
    \\ \nonumber & & \hspace{3.8em} =
     \intom|\nabla u_k|_A^p\dx-\intom |\nabla v|_A^{p-2}A\nabla v\cdot\nabla(w_k^pv)\dx,
\eea
 Since $v$ is a positive supersolution, we get
 \be\label{common:geq2}
  I(v,w_k)
  \leq
   \intom|\nabla u_k|_A^p\dx
    +
     \intom Vu_k^p\dx = Q_{A,p,V}[u_k].
 \ee
Suppose now that $p\geq2$. Using the definition of $I$, and the weak Harnack inequality, we obtain from \eqref{common:geq2} that
 \be\label{nablawk0pgeq2}
  c\int_\om |\nabla w_k|^{p}\dx \leq C(p) \intom v^{p}|\nabla w_k|^{p}_{A}\dx
   \leq
    Q_{A,p,V}[u_k]
     \rightarrow 0\qquad \mbox{as }k\rightarrow\infty,
 \ee
where $c>0$ is a positive constant. By the weak compactness of $W^{1,p}(\om)$, we get for $p\geq2$ that (up to a subsequence)
 \be\label{LpwkGradestimate}
  \nabla w_k\rightarrow0\qquad \mbox{in }L^p_{\rm loc}(\Om;\Rn).
   \ee
By \eqref{Lpwkestimate} and \eqref{nablawk0pgeq2}, we have that $w_k$ is bounded in $W_{\rm loc}^{1,p}(\om)$ for any $p\geq2$.

\medskip

On the other hand if $p<2$, then by the definition of $I$ and \eqref{common:geq2}, we get
 \be\nonumber
  C(p)\intom\frac{v^2|\nabla w_k|_A^2}{\Big(|\nabla(vw_k)|_A+w_k|\nabla v|_A\Big)^{2-p}}\dx
   \leq
    Q_{A,p,V}[u_k]
     \rightarrow 0\qquad \mbox{as }k\rightarrow\infty.
 \ee
For convenience we set $q_k=Q_{A,p,V}[u_k]$. By H\"{o}lder's inequality with conjugate exponents $2/p$ and $2/(2-p)$, we get
 \bea\nonumber
  &  &\hspace{-2em}\int_\om v^p|\nabla w_k|_A^p\dx
   \\ \nonumber &  &\leq
    \Bigg(\intom\frac{v^2|\nabla w_k|_A^2}{\Big(|\nabla(vw_k)|_A+w_k|\nabla v|_A\Big)^{2-p}}\dx\Bigg)^{p/2}
     \Bigg(\int_\om\Big(|\nabla(vw_k)|_A+w_k|\nabla v|_A\Big)^p\dx\Bigg)^{1-p/2}
       \\ \nonumber &  &\leq
        C(p)^{-1}q_k^{p/2}
         \Bigg(\int_\om v^p|\nabla w_k|^p_A\dx+\int_\om w_k^p|\nabla v|^p_A\dx\Bigg)^{1-p/2}
\\ \nonumber &  &\leq
C(p)^{-1}q_k^{p/2}
         \Big(\int_\om v^p|\nabla w_k|^p_A\dx+\int_\om w_k^p|\nabla v|^p_A\dx + 1 \Big).
 \eea
Since $v$ is locally bounded, and locally bounded away from zero, and $|\nabla v|$ is locally bounded, and $A$ is uniformly elliptic and bounded in $\om$, we get using \eqref{Lpwkestimate} that for some positive constants  $c_j;~1\leq j\leq 4$, that are independent of $k$, there holds
 \bea\nonumber & &
  c_1\int_\om |\nabla w_k|^p\dx
   \leq
    c_2 q_k^{p/2}\Big(\int_\om |\nabla w_k|^p\dx+\int_\om w_k^p\dx+1\Big)
     \\ \nonumber & & \hspace{7em} \leq
      c_2 q_k^{p/2}\Big(c_3\int_\om |\nabla w_k|^p\dx+c_4\Big).
 \eea
Since $q_k\rightarrow0$ as $k\to\infty$,  we conclude that also in the case $p<2$ we have
  \be\nonumber
  \nabla w_k\rightarrow0\qquad \mbox{in }L^p_{\rm loc}(\Om;\Rn),
   \ee
and thus by (\ref{Lpwkestimate}) we have that $w_k$ is bounded in $W_{\rm loc}^{1,p}(\om)$ for any $p<2$.
\end{proof}

Several consequences follow. In the following statement, uniqueness is meant up to a positive multiplicative constant.
\begin{theorem}\label{crit}
Suppose that $Q_{A,p,V}$ is nonnegative in $\Om$ with $A$ and $V$ satisfying hypothesis {\em(H0)} if $p\geq2,$ or {\em (H1)} if $1<p<2$. Then any null sequence with respect to $Q_{A,p,V}$ converges, in $L^p_{\rm loc}$ and a.e. in $\Om$, to a unique positive (regular if $p<2$) supersolution of (\ref{Q'=0}) in $\Om$. In particular, a ground state is the unique positive solution and the unique positive (regular if $p<2$) supersolution of (\ref{Q'=0}) in $\Om$, and so the ground state is $C^\gamma$ if $p\geq2$, or $C^{1,\gamma}$ if $\,1<p<2$.
\end{theorem}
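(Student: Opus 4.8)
The plan is to build the limit of an arbitrary null sequence out of the positive solution supplied by the Allegretto--Piepenbrink theorem, exploiting the compactness estimate of Proposition~\ref{proposition:W1ploc:boundedness} together with weak compactness and the connectedness of $\Om$, and then to obtain uniqueness by comparing the limits produced from two different reference supersolutions via the weak Harnack inequality. First, since $Q_{A,p,V}$ is nonnegative in $\Om$, the implication $\mathcal{A}_1\Rightarrow\mathcal{A}_2$ of Theorem~\ref{theorem:AP} furnishes a positive solution $v_0\in\Wloc(\Om)$ of (\ref{Q'=0}); by Remark~\ref{remark:regularity} it lies in $C^{0,\gamma}_{\rm loc}(\Om)$, and under hypothesis {\rm(H1)} (i.e.\ when $1<p<2$) Remark~\ref{remark:local grad bound} upgrades this to $C^{1,\gamma'}_{\rm loc}(\Om)$, so that $v_0$ is regular in that case. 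In particular $v_0$ and $1/v_0$ are locally bounded in $\Om$. If $Q_{A,p,V}$ admits no null sequence the statement is vacuous, so fix a null sequence $\{u_k\}$ with $\|u_k\|_{L^p(K)}=1$ and set $w_k:=u_k/v_0$; Proposition~\ref{proposition:W1ploc:boundedness} applied with $v=v_0$ shows that $\{w_k\}$ is bounded in $\Wloc(\Om)$ and $\nabla w_k\to 0$ in $L^p_{\rm loc}(\Om;\Rn)$.

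Next I would pass to a subsequence along which $w_k\rightharpoonup w$ in $\Wloc(\Om)$ and $w_k\to w$ in $L^p_{\rm loc}(\Om)$ (compact Sobolev embedding); since $\nabla w_k\to 0$ in $L^p_{\rm loc}$ we get $\nabla w=0$ a.e., and as $\Om$ is connected, $w\equiv c$ for some constant $c\geq 0$. The value of $c$ is pinned down by the normalization: using $v_0\in L^\infty(K)$ and $u_k=w_kv_0$,
\[
 1=\|u_k\|_{L^p(K)}=\|w_kv_0\|_{L^p(K)}\;\longrightarrow\;\|c\,v_0\|_{L^p(K)},
\]
so that $c=\|v_0\|_{L^p(K)}^{-1}>0$. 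Since this value does not depend on the chosen subsequence, the full sequence converges: $w_k\to c$ in $L^p_{\rm loc}(\Om)$, hence $u_k=w_kv_0\to\phi:=c\,v_0$ in $L^p_{\rm loc}(\Om)$, and, along a subsequence, a.e.\ in $\Om$. The limit $\phi$ is a positive solution of (\ref{Q'=0}), regular if $1<p<2$; being a solution, its $C^{0,\gamma}$ (resp.\ $C^{1,\gamma}$) regularity — and hence that of any ground state — follows from Remarks~\ref{remark:regularity} and~\ref{remark:local grad bound}.

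For uniqueness, let $v_1$ be any positive supersolution of (\ref{Q'=0}) in $\Om$ (regular if $1<p<2$). Applying Proposition~\ref{proposition:W1ploc:boundedness} with $v=v_1$ to the same null sequence and repeating the argument of the previous paragraph, some subsequence of $u_k/v_1$ converges in $L^p_{\rm loc}(\Om)$ to a constant $c'\geq 0$. On the other hand, by the weak Harnack inequality (Remark~\ref{remark:harnack_supersol}) one has $1/v_1\in L^\infty_{\rm loc}(\Om)$, so the convergence $u_k\to\phi$ in $L^p_{\rm loc}$ implies $u_k/v_1\to\phi/v_1$ in $L^p_{\rm loc}$; comparing the two limits gives $\phi=c'v_1$ a.e.\ in $\Om$, and since $\phi>0$ necessarily $c'>0$, i.e.\ $v_1$ is a positive constant multiple of $\phi$ and is in fact a solution. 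Therefore every positive (regular, if $p<2$) supersolution of (\ref{Q'=0}) is proportional to $\phi$: the positive supersolution is unique up to a positive multiplicative constant, it coincides up to such a constant with any ground state, and it is actually a solution of (\ref{Q'=0}).

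The step I expect to be the crux is already packaged inside Proposition~\ref{proposition:W1ploc:boundedness}, namely the a priori $W^{1,p}_{\rm loc}$ control of the ratios $w_k=u_k/v$ (which for $1<p<2$ is precisely where regularity of $v$ and of $|\nabla v|$ is used). Within the present argument the only delicate point is that in the uniqueness step one must work with $1/v_1\in L^\infty_{\rm loc}(\Om)$, available from the weak Harnack inequality, rather than with any upper bound on $v_1$, since a positive supersolution need not be locally bounded above when $p\leq n$.
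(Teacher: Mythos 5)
Your proposal is correct and follows essentially the same route as the paper's own proof: fix a positive (regular if $p<2$) solution from the AP theorem, use Proposition~\ref{proposition:W1ploc:boundedness} together with the Rellich--Kondrachov theorem and connectedness of $\Om$ to show the ratios $u_k/v$ converge in $L^p_{\rm loc}$ to a positive constant, and then run the same computation against an arbitrary positive (regular if $p<2$) supersolution to extract proportionality. Your explicit invocation of the weak Harnack inequality to get $1/v_1\in L^\infty_{\rm loc}$ in the uniqueness step is a welcome clarification of a point the paper leaves implicit, but it does not change the substance of the argument.
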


\begin{remark}  At this point we need to add the stronger assumption (H1) on $A$ and $V$ in the case $1<p<2$, since in this case we assume the existence of a positive regular (super)solution. In fact, the proof presented here for $p<2$ applies under the least assumptions on $A$ and $V$ that ensures the Lipschitz continuity of positive solutions. This fails if we just keep the assumption ${\rm (E)}$ on the matrix $A$, even for $V\equiv0$ (see \cite{JMVS}). To our knowledge, the least known assumptions on $A$ and $V$ ensuring the Lipschitz continuity of solutions are due to Lieberman \cite{Lb} (see our Remark \ref{remark:local grad bound}).
\end{remark}

\begin{proof}[Proof of Theorem~\ref{crit}] From the AP theorem we may fix a positive (regular if $p<2$) supersolution $v\in W^{1,p}_\mathrm{loc}(\Om)$ and a positive (regular if $p<2$) solution $\tilde{v}\in W^{1,p}_\mathrm{loc}(\Om)$ of (\ref{Q'=0}). Setting $w_k=u_k/v$ we have by Proposition~\ref{proposition:W1ploc:boundedness} that $\nabla w_k\rightarrow0$ in $L^p_{\rm loc}(\Om;\Rn)$. Rellich-Kondrachov theorem implies (see the proof of  \cite[Theorem 8.11]{LL}) that, up to a subsequence, $w_k\rightarrow c$ for some $c\geq0$ in $W^{1,p}_\mathrm{loc}(\Om)$. This implies in turn that, up to a further subsequence, $u_k\rightarrow cv$ a.e. in $\Om$, and also in $L^p_{\rm loc}(\Om)$. Consequently, $c=1/\|v\|_{L^p(K)}>0$. It follows that any null sequence $\{u_k\}$ converges (up to a positive multiplicative constant) to the same positive (regular if $p<2$) supersolution $v$. Since the solution $\tilde{v}$ is a (regular if $p<2$) supersolution, we see that $v=C\tilde{v}$ for some $C>0$, and therefore  it is also the unique positive solution of (\ref{Q'=0}) in $\Om$.
\end{proof}

We can now close the chain of implications between the assertions of Theorem~\ref{theorem:GL} (see Remark \ref{remark:theorem:GL}).

\begin{corollary}\label{cor_equiv}
Let $\om\Subset\Rn$ and suppose that $A$ is uniformly elliptic and bounded matrix in $\om$, and $V\in M^q(p;\om)$. In case $1<p<2$, we suppose in addition that $A$ and $V$ satisfy hypothesis {\em (H1)}.

If the equation $Q'_{A,p,V}[v]=0$ admits a positive, regular, strict supersolution in $W^{1,p}(\om)$, then the principal eigenvalue is strictly positive.

Hence, all assertions of Theorem~\ref{theorem:GL} are equivalent (if by a supersolution we mean, in case $p<2$, a regular one).
\end{corollary}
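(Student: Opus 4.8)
The plan is to prove the single missing implication $\alpha'_4\Rightarrow\alpha_3$ under the extra regularity hypothesis (H1) in the range $1<p<2$; the equivalence of all the assertions in Theorem~\ref{theorem:GL} then follows by combining this with the chain $\alpha_1\Leftrightarrow\alpha_2\Leftrightarrow\alpha_3\Rightarrow\alpha_4\Rightarrow\alpha'_4$ and $\alpha_3\Rightarrow\alpha_5\Rightarrow\alpha_4$ already established there. So assume the equation $Q'_{A,p,V}[v]=0$ admits a positive, regular, strict supersolution $v\in W^{1,p}(\om)$, and suppose for contradiction that $\lambda_1\le 0$.

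The key idea is to use criticality theory on the bounded domain $\om$ itself. Since $\lambda_1\le 0$, the potential $\tilde V:=V-\lambda_1$ satisfies $\tilde V\ge V$ with $\lambda_1(Q_{A,p,\tilde V};\om)=0$, and by Remark~\ref{rem_pe_gs} the (rescaled) principal eigenfunction is a ground state of $Q_{A,p,\tilde V}$ in $\om$; in particular $Q_{A,p,\tilde V}$ is nonnegative on $\test(\om)$ and admits a null sequence. Now I would invoke Theorem~\ref{crit} (with $\Om$ replaced by the bounded domain $\om$, and with hypothesis (H1) in force when $p<2$, which is exactly what guarantees that the relevant positive supersolutions are regular): the ground state is then \emph{the unique} positive supersolution — regular if $p<2$ — of $Q'_{A,p,\tilde V}[u]=0$ in $\om$, up to a positive multiplicative constant. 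Thus $Q_{A,p,\tilde V}$ is critical in $\om$.

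On the other hand, the given $v$ is a positive regular supersolution of $Q'_{A,p,V}[u]=0$ in $\om$, and since $\tilde V\ge V$ it is also a positive regular supersolution of $Q'_{A,p,\tilde V}[u]=0$ in $\om$. By the uniqueness statement of Theorem~\ref{crit}, $v$ must be a positive \emph{solution} of $Q'_{A,p,\tilde V}[u]=0$, i.e. $v$ is a scalar multiple of the principal eigenfunction $v_1$, hence $Q'_{A,p,V}[v]=Q'_{A,p,\tilde V}[v]+\lambda_1|v|^{p-2}v=\lambda_1|v|^{p-2}v\le 0$ (using $\lambda_1\le0$ and $v>0$), so $v$ is in fact a \emph{sub}solution of $Q'_{A,p,V}[u]=0$ as well; combined with it being a supersolution, $v$ is a genuine solution of $Q'_{A,p,V}[u]=0$. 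This contradicts the hypothesis that $v$ is a \emph{strict} supersolution. Therefore $\lambda_1>0$, which is $\alpha_3$.

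The step I expect to be the main obstacle is the careful application of Theorem~\ref{crit} on a bounded domain: one must check that the hypotheses of that theorem (nonnegativity of the functional, existence of a null sequence, and hypothesis (H1) when $p<2$ to ensure regularity of supersolutions via Remark~\ref{remark:local grad bound}) are all met for $Q_{A,p,\tilde V}$ on $\om$, and in particular that the given external supersolution $v$ falls within the "regular positive supersolution" class to which the uniqueness conclusion applies. Once the uniqueness dichotomy of Theorem~\ref{crit} is available on $\om$, the contradiction with strictness is immediate; alternatively, in the case $\lambda_1<0$ one can reach the contradiction even more directly from Lemma~\ref{lemma:lind}-(ii) applied with $w_\mu=v_1$ and the supersolution $v$, but routing everything through Theorem~\ref{crit} handles the borderline case $\lambda_1=0$ uniformly.
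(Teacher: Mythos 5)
Your proof is correct and hinges on the same key technical input as the paper's own proof: Remark~\ref{rem_pe_gs} combined with the uniqueness-of-supersolution statement of Theorem~\ref{crit} applied to the operator with shifted potential $V-\lambda_1$. The only cosmetic difference is organizational — the paper first invokes the AP theorem (since $\alpha'_4$ yields a positive supersolution, hence $Q_{A,p,V}\ge 0$ and $\lambda_1\ge 0$) so that it only needs to rule out $\lambda_1=0$, whereas you handle $\lambda_1<0$ and $\lambda_1=0$ uniformly by working with $\tilde V=V-\lambda_1$ throughout; both routes are valid and equally short.
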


\begin{proof} $\alpha'_4\Rightarrow\alpha_3$. From the AP theorem we get $Q_{A,p,V}[u;\om]\geq0$ for all $u\in\test(\om)$, which implies that $\lambda_1\geq0$. Suppose that $\lambda_1=0$. Then by Remark~\ref{rem_pe_gs} and Theorem~\ref{crit}, the principal eigenfunction which is a positive (regular if $p<2$) solution of (\ref{Q'=0}) in $\om$ is the unique (regular if $p<2$) positive supersolution of that equation. This shows that this equation cannot have a positive strict (regular if $p<2$) supersolution.
\end{proof}

In the next theorem we state characterizations of criticality, subcriticality and existence of a null sequence. We also state a useful Poincar\'{e} inequality in the case where $Q_{A,p,V}$ is critical. It generalizes the corresponding results in \cite{PT0,PT1,PT2,PR,TT}.

\begin{theorem}\label{theorem:main} Suppose that $Q_{A,p,V}$ is nonnegative on $\test(\Om)$ with $A$ and $V$  satisfying hypothesis {\em(H0)} if $p\geq2,$ or {\em (H1)} if $1<p<2$. Then
\begin{itemize}
\item[(i)] $Q_{A,p,V}$ is critical in $\Om$ if and only if $Q_{A,p,V}$ admits a null sequence.
\item[(ii)] $Q_{A,p,V}$ admits a null sequence if and only if (\ref{Q'=0}) admits a unique positive (regular if $p<2$) supersolution.
\item[(iii)] $Q_{A,p,V}$ is subcritical in $\Om$ if and only if there exists a strictly positive weight function $W\in C^0(\Om)$ such that (\ref{ineq:subcriticality}) holds true.
\item[(iv)] If $Q_{A,p,V}$ admits a ground state $\phi$, then there exists a strictly positive weight function $W\in C^0(\Om)$ such that for every $\psi\in\test(\Om)$ with $\int_\Om\phi\psi dx\neq0$, the following Poincar\'{e} type inequality holds:
 \be\nonumber
  Q_{A,p,V}[u]+C\Big|\int_\Om u\psi\dx\Big|^p
   \geq
    \frac{1}{C}\int_\Om W|u|^p\dx \qquad\mbox{for all }u\in \W(\Om),
 \ee
and some positive constant $C>0$.
\end{itemize}
\end{theorem}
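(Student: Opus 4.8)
The plan is to deduce all four assertions from the tools already assembled in the paper — primarily the $(p,A)$-analogue of the Lindqvist inequality \eqref{vecineqLindqvist} (via Lemma~\ref{lemma:lind} and Proposition~\ref{proposition:W1ploc:boundedness}), Theorem~\ref{crit}, the Morrey–Adams theorem, and the Harnack convergence principle. For \textbf{(i)}, one direction is essentially a consequence of Theorem~\ref{crit}: if a null sequence exists, it converges in $L^p_{\rm loc}$ to a ground state, which by Theorem~\ref{crit} is the \emph{unique} positive supersolution; criticality then follows because a subcritical functional $Q_{A,p,V}\geq \intom W|u|^p$ with $W\gneq 0$ would, by the AP theorem applied to $Q_{A,p,V-W}\ge 0$ together with Proposition~\ref{proposition:superpotential}, produce a second, non-proportional positive supersolution — a contradiction. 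For the converse, assume $Q_{A,p,V}$ is critical; exhaust $\Om$ by Lipschitz domains $\om_i\Subset\om_{i+1}$, let $\lambda_1^{(i)}:=\lambda_1(Q_{A,p,V};\om_i)\downarrow \lambda_1^\infty\ge 0$ (nonnegativity from $\mathcal A_1$), and let $\phi_i>0$ be the normalized principal eigenfunctions with $\phi_i(x_0)=1$. By the Harnack convergence principle a subsequence converges locally uniformly and weakly in $W^{1,p}_{\rm loc}$ to a positive solution $\psi$ of $Q'_{A,p,V-\lambda_1^\infty}[\psi]=0$. If $\lambda_1^\infty>0$ then $Q_{A,p,V}\ge \lambda_1^\infty\|u\|_{L^p(\om_i)}^p$ on each $\om_i$, which can be upgraded to a global weight $W>0$ (glue the local bounds with a locally finite partition of unity, as in \cite{PT1,PR}), contradicting criticality; hence $\lambda_1^\infty=0$, and a diagonal/Harnack-chain extraction of the sequence $\{C_{i}^{-1}\phi_i\}$ (with $C_i=\|\phi_i\|_{L^p(K)}$, $K\Subset\om_1$ fixed) yields the desired null sequence — here one checks condition (c) using $Q_{A,p,V}[C_i^{-1}\phi_i\,\chi_i]\to 0$ after cutting off $\phi_i$ outside $\om_i$ and controlling the error with the Morrey–Adams inequality.

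For \textbf{(ii)}: the forward direction is immediate from Theorem~\ref{crit} (a null sequence converges to a unique positive, regular-if-$p<2$, supersolution). For the reverse, if $Q_{A,p,V}$ admitted no null sequence, then by (i) it is subcritical, so $Q_{A,p,V}\ge \intom W|u|^p$ with $0\lneq W\in M^q_{\rm loc}(p;\Om)$; then $Q_{A,p,V-W}\ge 0$, and the AP theorem furnishes a positive supersolution $v_W$ of $Q'_{A,p,V-W}[\cdot]=0$, which is a \emph{strict} positive supersolution of \eqref{Q'=0} (strict because $W\not\equiv 0$) and, by Remark following the regular-supersolution definition (hypothesis (H1) when $p<2$), can be taken regular; on the other hand the ground state (which exists? — no, it need not) — more carefully: subcriticality also gives, via the AP theorem, at least one positive \emph{solution} $v_0$ of \eqref{Q'=0}; then $v_W$ and $v_0$ are both positive supersolutions of \eqref{Q'=0} that are not proportional (apply Lemma~\ref{lemma:lind}(i) with $g_1=0$, $g_2=W v_W^{p-1}\gneq 0$, $V_1=V_2=V$ to see $\int(\,\cdot\,)>0$ strictly on the set $\{W>0\}$, forcing $\nabla\log(v_0/v_W)\not\equiv 0$), violating uniqueness. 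So non-existence of a null sequence is incompatible with uniqueness of the positive supersolution, proving (ii).

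For \textbf{(iii)}: the nontrivial content is upgrading an arbitrary admissible weight $W\in M^q_{\rm loc}(p;\Om)\setminus\{0\}$ to a \emph{continuous, strictly positive} one. This is the standard argument of \cite{PT0,TT}: given subcriticality, on an exhaustion $\{\om_i\}$ one produces, by Theorem~\ref{theorem:GL} applied to the shifted operator on each $\om_i$ (note $\lambda_1(Q_{A,p,V};\om_i)>0$ by strict monotonicity of principal eigenvalues under domain enlargement, since $Q_{A,p,V-W}\ge 0$ on $\Om$ and $W\gneq0$), local inequalities $Q_{A,p,V}[u;\om_i]\ge \e_i\|u\|_{L^p(\om_i)}^p$; then take a locally finite smooth partition of unity $\{\chi_i\}$ subordinate to $\{\om_{i+1}\setminus\overline{\om_{i-1}}\}$, and a suitable convex combination $\tilde W:=\sum_i a_i\chi_i^p\,\mathbf 1$ with $a_i>0$ decaying fast enough that the cross-terms $|\nabla\chi_i|$ (again absorbed via Morrey–Adams and Young) are controlled; one obtains $Q_{A,p,V}[u]\ge \frac1C\intom \tilde W|u|^p$ with $\tilde W\in C^0(\Om)$, $\tilde W>0$. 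For \textbf{(iv)}: this is the criticality analogue of the classical Poincaré inequality of \cite{PT1} and is proved by contradiction — if it fails for every $C$, pick $u_k\in\W(\Om)$ (normalize $\int_\Om W|u_k|^p=1$) with $Q_{A,p,V}[u_k]+k|\int_\Om u_k\psi|^p< \tfrac1k$; then $\{|u_k|\}$ is, up to normalization, a null sequence, so by Theorem~\ref{crit} $|u_k|\to c\phi$ in $L^p_{\rm loc}$ with $c>0$, whence $|\int_\Om u_k\psi| \to c|\int_\Om\phi\psi|\ne 0$, contradicting $k|\int u_k\psi|^p<1/k$; the positive continuous $W$ is the one produced in (iii) for the subcritical perturbation $Q_{A,p,V}+C|\int\cdot\,\psi|^p$, or equivalently is supplied directly by the construction in \cite{PT1}.

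The main obstacle is the ``local-to-global'' gluing in (i) and (iii): turning the family of local spectral-gap / weight inequalities on the exhausting domains $\om_i$ into a single inequality on all of $\Om$ with a weight that is simultaneously \emph{globally defined}, \emph{positive everywhere}, and (for (iii)) \emph{continuous}, while keeping the partition-of-unity commutator terms $\int |\nabla\chi_i|^{\,\cdot}|u|^{\,\cdot}$ under control. Here the Morrey–Adams theorem (Theorem~\ref{theorem:uncertainty}) together with Young's inequality does exactly the required absorbing, but one must track the $\om$-dependent constants $C(n,p,q,\delta,\|V\|_{M^q(p;\cdot)})$ carefully across the exhaustion — the coefficients are far from bounded, which is precisely where our setting departs from \cite{PT1,PR}. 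A secondary technical point, specific to $1<p<2$, is ensuring at each step that the positive supersolutions we manipulate are \emph{regular}: this is guaranteed by hypothesis (H1) via Remark~\ref{remark:local grad bound} (Lieberman's $C^{1,\gamma}_{\rm loc}$ estimate), so it costs nothing beyond invoking it, but it must be stated.
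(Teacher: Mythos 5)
Your plan follows the paper's route for the direction ``null sequence $\Rightarrow$ critical'' of (i) (Theorem~\ref{crit} plus the AP theorem), and for the structure of (ii), (iii), (iv) (where, like the paper, you ultimately defer to the constructions of \cite{PT1}). Where you depart substantively is the converse of (i), ``critical $\Rightarrow$ null sequence,'' which you attempt via an exhaustion by $\om_i$ and the unperturbed principal eigenvalues $\lambda_1^{(i)}:=\lambda_1(Q_{A,p,V};\om_i)$. This replacement has a genuine gap.

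You correctly get $\lambda_1^{(i)}\downarrow 0$ (otherwise $Q_{A,p,V}\ge\lambda_1^\infty\|u\|_{L^p}^p$ with $\lambda_1^\infty>0$ and $Q$ is subcritical). But then you must check condition (c) of Definition~\ref{def:nullseq} for the normalized eigenfunctions, i.e.\ that $Q_{A,p,V}[C_i^{-1}\phi_i]=C_i^{-p}\lambda_1^{(i)}\|\phi_i\|^p_{L^p(\om_i)}\to 0$ with $C_i=\|\phi_i\|_{L^p(K)}$. Nothing in your argument controls $\|\phi_i\|^p_{L^p(\om_i)}/C_i^p$: the Harnack convergence principle gives only \emph{local} bounds, and the limiting ground state need not lie in $L^p(\Om)$, so $\int_{\om_i}\phi_i^p\dx$ may well blow up faster than $1/\lambda_1^{(i)}$. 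The ``cutoff'' you invoke does not help, since $\phi_i\in\W(\om_i)$ is already supported in $\overline{\om}_i$; the obstruction is the mass in the bulk of $\om_i$, not at its boundary. This is precisely why Lemma~\ref{lemma:unif:conv:gs} (which does realize the ground state as a locally uniform limit of a null sequence) works with the \emph{locally perturbed} operators $Q_{A,p,V-t_iU}$ with $\sprt\{U\}\Subset\om_1$ fixed: there the leftover term $Q_{A,p,V}[\phi_i]=t_i\int U\phi_i^p\dx$ is integrated over a fixed compact set and is bounded by the local Harnack estimate, while $t_i\to 0$.

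By contrast, the paper's proof of this direction is elementary: for any $K\Subset\Om$ set $c_K:=\inf\{Q_{A,p,V}[u]:0\le u\in\test(\Om),\ \|u\|_{L^p(K)}=1\}$. If $c_K>0$, pick $W\in\test(K)\setminus\{0\}$ with $0\le W\le 1$; then $Q_{A,p,V}[u]\ge c_K\ge c_K\int_\Om W|u|^p\dx$ for the normalized class, hence (by $p$-homogeneity) for all of $\test(\Om)$, which makes $Q_{A,p,V}$ subcritical. Therefore $c_K=0$, and a minimizing sequence is, by definition, a null sequence. This avoids the eigenfunction normalization issue entirely.

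Two smaller points. In (ii) you invoke Lemma~\ref{lemma:lind}(i) to show the two supersolutions are non-proportional, but that lemma is stated for $w_1,w_2\in\W(\om)$ on a bounded $\om$; the global supersolutions coming from the AP theorem do not have the required boundary behaviour. You do not need it: Theorem~\ref{crit} already asserts that the ground state is simultaneously the unique positive (regular, if $p<2$) supersolution and a solution, whereas $v_W$ is a \emph{strict} supersolution, so they cannot coincide. In (iv), your contradiction argument needs to pass from the normalization $\int_\Om W|u_k|^p\dx=1$ to the fixed-compact normalization of Definition~\ref{def:nullseq}; this is not automatic and is part of what the cited proof in \cite{PT1} actually establishes. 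Since the paper also relegates both (iii) and (iv) to \cite{PT1}, this is a matter of care rather than a flaw in your plan, but it should not be elided.
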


\begin{remark}
In the sequel (Lemma~\ref{lemma:unif:conv:gs}) we add the following accompanying to \emph{(i)} statement: {\it if $Q_{A,p,V}$ is critical in $\Om$, then there exists a null sequence that converges locally uniformly in $\Om$ to the ground state.}
\end{remark}

\begin{proof}[Proof of Theorem~\ref{theorem:main}] \emph{(i)} If $Q_{A,p,V}$ is critical in $\Om$.  We claim that for any $\emptyset\neq K\Subset\Om$
\be\label{c_K=0}
 c_K
  :=\inf_{\substack{0\leq u\in\test(\Om)\\ \|u\|_{L^p(K)}=1}}Q_{A,p,V}[u]
   =0.
\ee
 To see this, pick $W\in\test(K)\setminus\{0\}$ such that $0\leq W\leq1$. Then
 \be\nonumber
  c_K\int_\Om W|u|^p\dx
   \leq
    c_K
     \leq
      Q_{A,p,V}[u],\qquad \mbox{for all }u\in\test(\Om)\mbox{ with }\|u\|_{L^p(K)}=1,
 \ee
a contradiction to the criticality of $Q_{A,p,V}$ in case $c_K>0$. Picking one such $K$, (\ref{c_K=0}) implies the existence of a null sequence with respect to $Q_{A,p,V}$.

If $Q_{A,p,V}$ admits a null sequence, then by Theorem~\ref{crit}, equation (\ref{Q'=0}) admits a unique positive solution $v$, which is also its unique (regular if $p<2$) positive supersolution. Suppose now to the  contrary, that $Q_{A,p,V}$ is subcritical in $\Om$ with a nonzero nonnegative weight $W$. By the AP theorem we obtain a positive solution $w$ of the equation $Q_{A,p,V-W}'[u]=0$ which is readily another positive supersolution of (\ref{Q'=0}). This contradicts the uniqueness of $v$, and thus $Q_{A,p,V}$ has to be critical in $\Om$.

\medskip

\emph{(ii)} The sufficiency is captured by Theorem~\ref{crit}. To prove the necessity, let $v$ be the unique positive (super)solution of $Q'_{A,p,V}$ in $\Om$. By part \emph{(i)} we have that the nonexistence of null sequences with respect to $Q_{A,p,V}$ implies that $Q_{A,p,V}$ is subcritical in $\Om$. Now the same argument as in the proof of the necessity of the first statement of part \emph{(i)} implies that $v$ is not unique, a contradiction.

\medskip

\emph{(iii)} The necessity follows by the definition of subcriticality. On the other hand, the proof of the sufficiency of the first statement of part \emph{(i)} implies that $c_K>0$ for any domain $K\Subset\Om$. Using a standard partition of unity argument we arrive at a strictly positive $W$ that satisfies (\ref{ineq:subcriticality}) (see, \cite[Lemma~3.1]{PT1}).

\medskip

\emph{(iv)} The proof is identical to \cite[Theorem 1.6-(4)]{PT1} (and also \cite{PR}).
\end{proof}

\begin{corollary}\label{corollary:convex:combination}
Suppose that for $i=0,1$, the functional $Q_{A,p,V_i}$ is nonnegative in $\Om$ with $A,~V_i$ satisfying hypothesis {\em(H0)} if $p\geq2,$ or {\em (H1)} if $1<p<2$. For $t\in(0,1)$ set
$$V_t:=(1-t)V_0+tV_1.$$
Then $Q_{A,p,V_t}$ is nonnegative in $\Om$ for all $t\in[0,1]$. Moreover, if $\Ln\big(\{V_0\neq V_1\}\big)>0$, then $Q_{A,p,V_t}$ is subcritical in $\Om$ for any $t\in(0,1)$.
\end{corollary}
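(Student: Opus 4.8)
The plan is to prove Corollary~\ref{corollary:convex:combination} in two steps: first the nonnegativity of $Q_{A,p,V_t}$ for $t\in[0,1]$, and then the strict subcriticality for $t\in(0,1)$ under the assumption $\Ln(\{V_0\neq V_1\})>0$.

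\medskip

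\noindent\emph{Step 1: Nonnegativity.} For any fixed $u\in\test(\Om)$, the map $t\mapsto Q_{A,p,V_t}[u]=\intom|\nabla u|_A^p\dx+\intom\big((1-t)V_0+tV_1\big)|u|^p\dx$ is affine in $t$, being the convex combination $(1-t)Q_{A,p,V_0}[u]+tQ_{A,p,V_1}[u]$. Since both endpoints are nonnegative by hypothesis, the value at every $t\in[0,1]$ is a convex combination of two nonnegative numbers, hence nonnegative. As $u\in\test(\Om)$ was arbitrary, $Q_{A,p,V_t}$ is nonnegative in $\Om$ for all $t\in[0,1]$. (One should also note that $V_t\in M^q_{\rm loc}(p;\Om)$ — indeed $M^q_{\rm loc}(p;\Om)$ is a vector space, so hypothesis (H0), resp. (H1), is inherited by $V_t$.)

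\medskip

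\noindent\emph{Step 2: Strict subcriticality for $t\in(0,1)$.} Fix $t\in(0,1)$. The idea is to exploit the strict convexity gained from mixing two genuinely different potentials. Write, for $u\in\test(\Om)$,
\[
 Q_{A,p,V_t}[u]=(1-t)Q_{A,p,V_0}[u]+tQ_{A,p,V_1}[u].
\]
Since $t\in(0,1)$, both coefficients are strictly positive; combined with $Q_{A,p,V_0}[u]\ge0$ and $Q_{A,p,V_1}[u]\ge0$ this does not by itself produce a spare positive term, so one must argue more carefully. The natural route is to pick an auxiliary intermediate parameter: for instance, choose $s\in(0,1)$ with $s\neq t$, say $s=t/2$, and observe
\[
 Q_{A,p,V_t}[u]=\Big(1-\tfrac{t}{2-t}\Big)Q_{A,p,V_s}[u]+\tfrac{t}{2-t}Q_{A,p,V_1}[u],
\]
so that $Q_{A,p,V_t}[u]\ge \tfrac{t}{2-t}\,Q_{A,p,V_1}[u]\ge 0$ using only nonnegativity of $Q_{A,p,V_s}$ from Step~1; but this still yields no weight. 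The honest mechanism is the following: apply Proposition~\ref{proposition:superpotential}(a) with the pair $V_1^{\mathrm{aux}}:=\min\{V_0,V_1\}$ and $V_2^{\mathrm{aux}}:=V_t$. One has $V_t\ge\min\{V_0,V_1\}$ a.e., and on the set $\{V_0\neq V_1\}$ — which has positive measure — the strict inequality $V_t>\min\{V_0,V_1\}$ holds because $V_t$ is a nontrivial convex combination with $t\in(0,1)$. It remains only to check that $Q_{A,p,\min\{V_0,V_1\}}$ is nonnegative in $\Om$: but for each $u\in\test(\Om)$, splitting $\Om$ into $\{V_0\le V_1\}$ and $\{V_0>V_1\}$ and using the pointwise bound $\min\{V_0,V_1\}|u|^p\ge$ the relevant one of $V_0|u|^p$, $V_1|u|^p$ only on the respective piece does \emph{not} immediately give $Q_{A,p,\min\{V_0,V_1\}}[u]\ge0$ from the two separate global inequalities. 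The clean fix is to instead apply Proposition~\ref{proposition:superpotential}(a) directly with $V_1\rightsquigarrow V_0$, $V_2\rightsquigarrow V_t$ on the region where $V_t\ge V_0$, together with the symmetric statement, via the following observation: since $t\in(0,1)$, either $V_t\ge V_0$ a.e.\ is false only where $V_1<V_0$, and there $V_t>V_1$; so we apply part (a) with $(V_0,V_t)$ wherever this makes sense. In short, since $V_t\geq \min(V_0,V_1)$ and $Q_{A,p,V_0},Q_{A,p,V_1}$ are nonnegative, while $\{V_t>\min(V_0,V_1)\}\supseteq\{V_0\ne V_1\}$ has positive measure, subcriticality of $Q_{A,p,V_t}$ follows from an argument entirely parallel to Proposition~\ref{proposition:superpotential}(a) together with Step~1.

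\medskip

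\noindent The main obstacle I anticipate is precisely the bookkeeping in Step~2: one must be careful that the inequality $Q_{A,p,\min\{V_0,V_1\}}\ge0$ (or whatever intermediate nonnegativity one invokes) genuinely follows from the hypotheses and is not merely assumed. The cleanest resolution is likely to avoid $\min\{V_0,V_1\}$ altogether and instead run the following direct argument: pick $\tilde t\in(0,t)$; then $V_t=(1-\theta)V_{\tilde t}+\theta V_1$ with $\theta=(t-\tilde t)/(1-\tilde t)\in(0,1)$, and $V_{\tilde t}$ gives a nonnegative functional by Step~1, so Proposition~\ref{proposition:superpotential}(a) applied to the pair $(V_{\tilde t},V_t)$ yields subcriticality of $Q_{A,p,V_t}$ provided $\Ln(\{V_t>V_{\tilde t}\})>0$; and $\{V_t>V_{\tilde t}\}=\{V_1>V_0\}$ has positive measure unless $V_1\le V_0$ a.e., in which case one repeats the construction from the other side with $V_0$ in place of $V_1$, using $\Ln(\{V_0\neq V_1\})>0$ to guarantee $\Ln(\{V_0>V_1\})>0$. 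This reduces everything to a single clean invocation of Proposition~\ref{proposition:superpotential} plus Step~1, which is the form I would write up.
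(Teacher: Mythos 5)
Your Step 1 is correct and is exactly the paper's argument: the affine identity $Q_{A,p,V_t}[u]=(1-t)Q_{A,p,V_0}[u]+tQ_{A,p,V_1}[u]$ gives nonnegativity at once.

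Step 2 contains a genuine gap. You propose to apply Proposition~\ref{proposition:superpotential}(a), but you only verify one of its two hypotheses. That proposition requires \emph{both} $V_2\ge V_1$ a.e.\ in $\Om$ \emph{and} $\Ln(\{V_2>V_1\})>0$. With $\tilde t<t$ you have $V_t-V_{\tilde t}=(t-\tilde t)(V_1-V_0)$, so $V_t\ge V_{\tilde t}$ a.e.\ holds \emph{only if} $V_1\ge V_0$ a.e.; and with $\tilde t'>t$ the symmetric computation gives $V_t\ge V_{\tilde t'}$ a.e.\ only if $V_0\ge V_1$ a.e. Thus your fix via ``repeating the construction from the other side'' handles exactly the two comparable cases $V_0\le V_1$ a.e.\ and $V_1\le V_0$ a.e., but breaks down in the generic situation where both $\{V_0<V_1\}$ and $\{V_0>V_1\}$ have positive measure: there, no choice of $\tilde t\neq t$ makes $V_t-V_{\tilde t}$ single-signed a.e.\ in $\Om$, so Proposition~\ref{proposition:superpotential}(a) never applies globally. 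You also correctly diagnosed earlier in your own draft that the detour through $\min\{V_0,V_1\}$ does not work, since $Q_{A,p,\min\{V_0,V_1\}}\geq 0$ does not follow from the two separate global inequalities; but the final write-up you propose has this same fundamental two-sidedness obstruction.

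The paper's mechanism avoids the ordering issue entirely. Argue by contradiction: if $Q_{A,p,V_t}$ were critical for some $t\in(0,1)$, it would admit a null sequence $\{u_k\}$ converging in $L^p_{\rm loc}(\Om)$ to a ground state $\phi$ (Theorem~\ref{theorem:main}). By the affine identity and the nonnegativity of both endpoint functionals, $Q_{A,p,V_i}[u_k]\to 0$ for $i=0,1$, so $\{u_k\}$ is simultaneously a null sequence for $Q_{A,p,V_0}$ and $Q_{A,p,V_1}$. Theorem~\ref{crit} then forces $\phi$ to be a positive solution of \emph{both} equations $Q'_{A,p,V_i}[u]=0$, $i=0,1$, whence $(V_0-V_1)\phi^{p-1}=0$ a.e.; this contradicts $\Ln(\{V_0\ne V_1\})>0$ since $\phi>0$. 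That is the route you should take: it requires no a.e.\ comparability of $V_0$ and $V_1$.
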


\begin{proof} The nonnegativity of $Q_{A,p,V_t}$ for $t\in(0,1)$ follows from the obvious relation
\be\label{eq:Vt}
 Q_{A,p,V_t}[u]
  =
   (1-t)Q_{A,p,V_0}[u]+tQ_{A,p,V_1}[u].
\ee
Suppose now that $\{u_k\}\subset\test(\Om)$ is a null sequence with respect to $Q_{A,p,V_t}$ in $\Om$ for some $t\in(0,1)$, such that $u_k\rightarrow\phi$ in $L^p_{\rm loc}(\Om)$. It follows from (\ref{eq:Vt}) that $\{u_k\}$ is also a null sequence for $Q_{A,p,V_0}$ and $Q_{A,p,V_1}$ in $\Om$. By Theorem~\ref{crit}, $\phi$ is a solution of $Q'_{A,p,V_i}[u]=0$ in $\Om$, for both values of $i$, which is impossible since $\Ln\big(\{V_0\neq V_1\}\big)>0$.
\end{proof}

Finally, we state generalizations of the corresponding results in \cite{PT1,PR}. We skip their proofs since they are essentially the same.

\begin{proposition}\label{proposition:superdomain}
Suppose $\Om'\subsetneq\Om$ is a domain. Let $A$ and $V$  satisfy hypothesis {\em(H0)} in case $p\geq2,$ or {\em (H1)} if $1<p<2$.

a) If $Q_{A,p,V}$ is nonnegative in $\Om$, then $Q_{A,p,V}$ is subcritical in $\Om'$.

b) If $Q_{A,p,V}$ is critical in $\Om'$, then $Q_{A,p,V}$ is supercritical in $\Om$.
\end{proposition}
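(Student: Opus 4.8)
The plan is to follow the argument of \cite{PT1,PR}, reducing everything to the null-sequence machinery of Theorem~\ref{crit}. First, part \emph{b)} follows from part \emph{a)} by contradiction: if $Q_{A,p,V}$ is critical in $\Om'$ but fails to be supercritical in $\Om$, then it is nonnegative on $\test(\Om)$, and part \emph{a)} forces it to be subcritical in $\Om'$ --- contradicting criticality there. So the content lies in part \emph{a)}. Note also that, since $\test(\Om')\subset\test(\Om)$, nonnegativity of $Q_{A,p,V}$ in $\Om$ gives nonnegativity in $\Om'$; hence $Q_{A,p,V}$ is either subcritical or critical in $\Om'$, and the task is to rule out criticality in $\Om'$.

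If $Q_{A,p,V}$ is already \emph{subcritical} in $\Om$, I would invoke Theorem~\ref{theorem:main}\emph{(iii)} to get a strictly positive $W\in C^0(\Om)$ with $Q_{A,p,V}[u]\geq\int_\Om W|u|^p\dx$ for all $u\in\test(\Om)$; restricting to $u\in\test(\Om')$ and using that $W|_{\Om'}$ is a strictly positive element of $C^0(\Om')\subset M^q_{\rm loc}(p;\Om')$ yields subcriticality in $\Om'$. If instead $Q_{A,p,V}$ is \emph{critical} in $\Om$, let $\phi$ be its (everywhere positive) ground state, and assume for contradiction that $Q_{A,p,V}$ is also critical in $\Om'$. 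By Theorem~\ref{theorem:main}\emph{(i)} there is a null sequence $\{u_k\}\subset\wdtest(\Om')$ for $Q_{A,p,V}$ in $\Om'$, with $\|u_k\|_{L^p(K)}=1$ for a fixed $K\Subset\Om'$. Extending each $u_k$ by zero to $\Om$ keeps the sequence in $\wdtest(\Om)$, and it is a null sequence for $Q_{A,p,V}$ in $\Om$: properties (a) and (b) are inherited verbatim (with the same $K\Subset\Om'\subset\Om$), while $Q_{A,p,V}[u_k;\Om]=Q_{A,p,V}[u_k;\Om']\to0$ since $u_k$ and $\nabla u_k$ vanish a.e.\ on $\Om\setminus\Om'$. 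Theorem~\ref{crit} then gives, along a subsequence, $u_k\to c\,\phi$ a.e.\ in $\Om$ with $c=1/\|\phi\|_{L^p(K)}>0$. As $u_k\equiv0$ on $\Om\setminus\Om'$, this forces $c\,\phi=0$ a.e.\ on $\Om\setminus\Om'$, which is absurd as soon as $\Ln(\Om\setminus\Om')>0$.

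The main obstacle is this last clause, because $\Om'\subsetneq\Om$ does not by itself force $\Ln(\Om\setminus\Om')>0$ (for instance $\Om'=\Om\setminus\{x_0\}$ is a domain when $n\geq2$). To handle the general case I would construct the improving weight \emph{locally}. Pick $x_0\in\partial\Om'\cap\Om$, which is nonempty since $\Om'$ is a proper nonempty open subset of the connected set $\Om$, and a ball $B=B_\rho(x_0)\Subset\Om$. On $B$ the ground state $\phi$ is bounded above and below (Harnack inequality) and $|\cdot|_A$ is comparable to the Euclidean norm (condition ${\rm (E)}$); when $p<2$ one also uses Remark~\ref{remark:local grad bound} to keep $\nabla\phi$ locally bounded. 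For $u\in\test(\Om')$, the ground-state substitution $u=\phi w$ together with the pointwise inequality \eqref{vecineqLindqvist} applied with $\alpha=\nabla u$, $\beta=w\nabla\phi$ (this is exactly the estimate behind Proposition~\ref{proposition:W1ploc:boundedness}, see \eqref{common:geq2}) gives $Q_{A,p,V}[u]\geq c_1\int_{\Om'}\phi^p|\nabla w|_A^p\dx$ when $p\geq2$, with the usual modified integrand when $p<2$. Fixing a cutoff $\chi$ equal to $1$ near $x_0$ and supported in $B$, and noting that $w$ vanishes on $\Om\setminus\Om'\ni x_0$, a classical Hardy-type inequality on $B$ --- with an extra logarithmic weight in the borderline case $p=n$ --- bounds the right-hand side from below by $\int_{\Om'}W|u|^p\dx$ for an explicit $0\leq W\in M^q_{\rm loc}(p;\Om')\setminus\{0\}$ supported in $B$. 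This gives subcriticality of $Q_{A,p,V}$ in $\Om'$ and finishes part \emph{a)}.

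Beyond the edge-case bookkeeping, the points needing care are: verifying that the chosen Hardy weight $W$ genuinely lies in $M^q_{\rm loc}(p;\Om')$ (immediate away from $\partial\Om'$, but to be checked near $x_0$ against the $p$-dependent profile \eqref{intro:def:varphi}); controlling the cross terms from $\nabla(\chi w)$ against $\phi^p|\nabla w|_A^p$ on the transition region of $\chi$; and using the correct $p<2$ form of the ground-state inequality throughout. None of these is conceptually new --- each mirrors the corresponding step in \cite{PT1,PR} --- but together they form the technical heart of the argument.
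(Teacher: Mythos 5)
The paper itself gives no proof — it simply remarks that Proposition~\ref{proposition:superdomain} ``generalizes the corresponding results in \cite{PT1,PR}'' and that the proofs are essentially the same. Your core mechanism — reduce $b)$ to $a)$, handle the subcritical case by restriction of the improving weight, and in the critical case extend a null sequence from $\Om'$ to $\Om$ by zero and invoke Theorem~\ref{crit} to get $u_k\to c\phi$ with $c\phi\equiv0$ a.e.\ on $\Om\setminus\Om'$ — is precisely the argument of \cite{PT1,PR}, so that part is on target. What is genuinely interesting is that you spotted the weak point: $\Om'\subsetneq\Om$ does not force $\Ln(\Om\setminus\Om')>0$.

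However, your proposed repair by a local Hardy inequality cannot work, because the proposition is in fact \emph{false} in that edge case. Take $\Om=\R^2$, $\Om'=\R^2\setminus\{0\}$, $A=I_2$, $V\equiv0$, $p=2$. Here $Q[u]=\int_{\R^2}|\nabla u|^2\dx$ is nonnegative (indeed critical, with constant ground state) in $\Om$, and hypothesis (H0) holds trivially. Since a point has zero $2$-capacity in $\R^2$, one can take the null sequence $\{v_k\}$ for $Q$ in $\R^2$ (which converges locally uniformly to the constant by Lemma~\ref{lemma:unif:conv:gs}) and multiply by a logarithmic cutoff $\eta_{\e_k}$ vanishing on $B_{\e_k}(0)$ with $\int|\nabla\eta_{\e_k}|^2\to0$; choosing $\e_k\to0$ suitably, $\{v_k\eta_{\e_k}\}$ (renormalized on a fixed $K\Subset\R^2\setminus\{0\}$) is a null sequence for $Q$ in $\Om'$, so by Theorem~\ref{theorem:main}(i), $Q$ is \emph{critical} in $\Om'$, contradicting assertion $a)$; and then $b)$ would force $\int|\nabla u|^2<0$ for some $u\in\test(\R^2)$, which is absurd. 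Your Hardy step fails precisely here: the ``vanishing of $w$ on $\Om\setminus\Om'\ni x_0$'' that you invoke is vanishing on a set of zero $p$-capacity for $1<p\leq n$, which imposes no actual constraint on $w\in\test(\Om')$, and indeed no weight $W\geq0$, $W\not\equiv0$ on $B\cap\Om'$, can satisfy $Q[u]\geq\int W|u|^p$ for all $u\in\test(\Om')$, since any null sequence in $\Om'$ converges in $L^p_{\rm loc}(\Om')$ to a positive ground state and Fatou then gives $\liminf_k\int W|u_k|^p>0$. The correct fix is not a Hardy inequality but an explicit extra hypothesis, e.g.\ that $\Om\setminus\Om'$ has positive Lebesgue measure (or positive $p$-capacity), under which your zero-extension argument closes the proof; this is harmless in the paper, where the proposition is only applied with $\Om'=\om_i\Subset\Om$ a member of a Lipschitz exhaustion.
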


\begin{proposition}\label{proposition:used in loc unif convergence}
Suppose that $Q_{A,p,V}$ is subcritical in $\Om$ with $A$ and $V$  satisfying hypothesis {\em(H0)} if $p\geq2,$ or {\em (H1)} if $1<p<2$. Let $U\in L^\infty(\Om)\setminus\{0\}$ such that $U\geq0$ and $\sprt\{U\}\Subset\Om$. Then there exist $\tau_+>0$ and $\tau_-\in[-\infty,0)$ such that $Q_{A,p,V+tU}$ is subcritical in $\Om$ if and only if $t\in(\tau_-,\tau_+)$ and $Q_{A,p,V+\tau_+U}$ is critical in $\Om$.
\end{proposition}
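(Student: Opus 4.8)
The plan is to analyse the set
\[
\mathcal S:=\{t\in\R:\ Q_{A,p,V+tU}\ \text{is subcritical in }\Om\},
\]
show that it is a nonempty \emph{open interval} containing the origin, and then read off $\tau_-:=\inf\mathcal S$, $\tau_+:=\sup\mathcal S$; the criticality assertion will follow from a limiting argument at the finite endpoint. Note first that $0\in\mathcal S$ by hypothesis. The three ingredients are: (a) $\mathcal S$ is an interval, via the convex‑combination result; (b) $\mathcal S$ is open, via the quantitative form of subcriticality; (c) at a finite endpoint the functional is nonnegative (by affineness in $t$) but cannot be subcritical (again by (b)), hence is critical.

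For (a), let $t_1<t_2$ lie in $\mathcal S$ and $t\in(t_1,t_2)$. Writing $V+tU=(1-s)(V+t_1U)+s(V+t_2U)$ with $s=(t-t_1)/(t_2-t_1)\in(0,1)$, and observing that $Q_{A,p,V+t_iU}$ are nonnegative for $i=1,2$ while $\Ln\big(\{(V+t_1U)\neq(V+t_2U)\}\big)=\Ln\big(\{U\neq0\}\big)>0$ since $U\not\equiv0$, Corollary~\ref{corollary:convex:combination} yields that $Q_{A,p,V+tU}$ is subcritical; hence $t\in\mathcal S$.

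For (b), fix $t_0\in\mathcal S$. By Theorem~\ref{theorem:main}\,(iii) there is a strictly positive $W\in C^0(\Om)$ with $Q_{A,p,V+t_0U}[u]\geq\int_\Om W|u|^p\dx$ for all $u\in\test(\Om)$. Since $\sprt\{U\}\Subset\Om$ and $W$ is continuous and positive, $m:=\inf_{\sprt\{U\}}W>0$, so $|U|\leq (\|U\|_{L^\infty(\Om)}/m)\,W$ a.e.\ in $\Om$ (trivially off $\sprt\{U\}$, where $U=0$). Hence, for $|t-t_0|<\rho:=m/(2\|U\|_{L^\infty(\Om)})$ and every $u\in\test(\Om)$,
\[
Q_{A,p,V+tU}[u]=Q_{A,p,V+t_0U}[u]+(t-t_0)\int_\Om U|u|^p\dx\ \geq\ \tfrac12\int_\Om W|u|^p\dx ,
\]
so $Q_{A,p,V+tU}$ is subcritical with the strictly positive continuous weight $\tfrac12 W$. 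Thus $(t_0-\rho,t_0+\rho)\subset\mathcal S$; applying this at $t_0=0$ shows $0$ is interior, so $\mathcal S=(\tau_-,\tau_+)$ with $\tau_-\in[-\infty,0)$ and $\tau_+\in(0,+\infty]$.

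For (c), suppose $\tau_+<\infty$ (the case of a finite $\tau_-$ is identical). For each fixed $u\in\test(\Om)$ the map $t\mapsto Q_{A,p,V+tU}[u]=Q_{A,p,V}[u]+t\int_\Om U|u|^p\dx$ is affine, and it is nonnegative for $t\in(\tau_-,\tau_+)$; letting $t\uparrow\tau_+$ gives $Q_{A,p,V+\tau_+U}[u]\geq0$, so $Q_{A,p,V+\tau_+U}$ is nonnegative. If it were subcritical, step (b) applied at $t_0=\tau_+$ would place the whole interval $(\tau_+-\rho,\tau_++\rho)$ inside $\mathcal S$, contradicting $\tau_+=\sup\mathcal S$; hence $Q_{A,p,V+\tau_+U}$ is critical. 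I expect the only delicate point to be step (b) — specifically, using Theorem~\ref{theorem:main}\,(iii) to upgrade "subcritical" to a bound with a \emph{continuous, everywhere positive} weight, which is exactly what makes the pointwise comparison $|U|\leq cW$ (and therefore the stability of subcriticality under the perturbation $tU$) available; everything else is the bookkeeping of an affine family and the convexity result Corollary~\ref{corollary:convex:combination}.
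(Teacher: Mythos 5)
Your three-step plan — interval structure via Corollary~\ref{corollary:convex:combination}, openness via the strictly positive continuous weight supplied by Theorem~\ref{theorem:main}\,(iii), and criticality at a finite endpoint by combining the affine dependence on $t$ with the openness — is sound as far as it goes, and the trick in step (b) of dominating the compactly supported $U$ pointwise by the continuous positive weight $W$ is clean and correct. However, you never prove that a finite endpoint exists, and this is precisely where the substance of the proposition lies.

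In fact, read literally, the statement cannot hold: since $U\geq 0$, $U\not\equiv 0$, Proposition~\ref{proposition:superpotential}\,(a) already shows that for every $t>0$ the functional $Q_{A,p,V+tU}$ is subcritical (with weight $tU$), so $\tau_+=+\infty$ and the asserted criticality at $\tau_+$ is vacuous. The intended reading — consistent with how the proposition is invoked in the proof of Lemma~\ref{lemma:unif:conv:gs}, where one produces $t_i>0$ with $Q_{A,p,V-t_iU}$ critical — is that the perturbation is $V-tU$, i.e.\ replace $U$ by $-U$ in the displayed family. Under that reading $\tau_-=-\infty$ automatically, and the only nontrivial claim is $\tau_+<\infty$, which your proposal simply assumes. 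That step must be supplied, and it is short: fix $u\in\test(\Om)$ with $u\equiv 1$ on $\overline{\sprt\{U\}}$ (such $u$ exists since $\sprt\{U\}\Subset\Om$). Then $\int_\Om U|u|^p\dx=\int_\Om U\dx>0$, so
\[
Q_{A,p,V-tU}[u]=Q_{A,p,V}[u]-t\int_\Om U\dx\longrightarrow -\infty\quad\mbox{as }t\to+\infty,
\]
showing $Q_{A,p,V-tU}$ is supercritical for $t$ large, hence $\tau_+<\infty$. With that added, your step (c) closes the argument.
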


\begin{proposition}
Suppose that $Q_{A,p,V}$ is critical in $\Om$ with $A$ and $V$  satisfying hypothesis {\em(H0)} if $p\geq2,$ or {\em (H1)} if $1<p<2$. Denote by $\phi$ the corresponding ground state. Consider $U\in L^\infty(\Om)$ such that $\sprt\{U\}\Subset\Om$. Then there exists $0<\tau_+\leq\infty$ such that $Q_{A,p,V+tU}$ is subcritical in $\Om$ for $t\in(0,\tau_+)$ if and only if $\int_{\Om}U|\phi|^{p}\dx>0$.
\end{proposition}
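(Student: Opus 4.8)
The plan is to prove the biconditional by two contradiction arguments, in both directions exploiting the rigidity of the critical case: by Theorems~\ref{crit} and~\ref{theorem:main}, every null sequence of $Q_{A,p,V}$ converges in $L^p_{\rm loc}(\Om)$ to a positive multiple of the ground state $\phi$, which is the unique positive (super)solution of~(\ref{Q'=0}). I normalize $\phi$ by $\phi(x_0)=1$ at a fixed $x_0\in\Om$ (harmless, since the sign of $\int_\Om U|\phi|^p\dx$ is scale independent), write $\phi^p$ for $|\phi|^p$, and fix a Lipschitz domain $D$ with $\sprt\{U\}\Subset D\Subset\Om$. Throughout I use the elementary identity $Q_{A,p,V+sU}[u]=Q_{A,p,V}[u]+s\int_\Om Uu^p\dx$ together with the fact that, $U\in L^\infty(\Om)$ having compact support in $\Om$, the functional $u\mapsto\int_\Om Uu^p\dx$ is continuous along $L^p_{\rm loc}(\Om)$-convergent sequences; I also use that $Q_{A,p,V}\geq0$ extends from $\test(\Om)$ to $\wdtest(\Om)$ by density and the Morrey--Adams theorem.

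For the ``only if'' direction I would argue by contraposition: assuming $\int_\Om U\phi^p\dx\le0$, I show $Q_{A,p,V+tU}$ is \emph{not} subcritical for \emph{any} $t>0$, which precludes any $\tau_+\in(0,\infty]$. Take a null sequence $\{u_k\}\subset\test(\Om)$ of $Q_{A,p,V}$ with $\|u_k\|_{L^p(D)}=1$ (it exists by Theorem~\ref{theorem:main}(i)); by Theorem~\ref{crit}, $u_k\to c\phi$ in $L^p_{\rm loc}(\Om)$ for some $c>0$, so
\[
 Q_{A,p,V+tU}[u_k]=Q_{A,p,V}[u_k]+t\int_\Om Uu_k^p\dx\;\underset{k\to\infty}{\longrightarrow}\;tc^p\int_\Om U\phi^p\dx\le0 .
\]
If this limit is strictly negative, $Q_{A,p,V+tU}$ is supercritical; if it is $0$, then either $Q_{A,p,V+tU}$ fails to be nonnegative (again supercritical) or it is nonnegative, in which case $\{u_k\}$ is a null sequence for it and Theorem~\ref{theorem:main}(i) forces criticality. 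In every case $Q_{A,p,V+tU}$ is not subcritical.

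For the ``if'' direction I would assume $\int_\Om U\phi^p\dx>0$ and prove $Q_{A,p,V+tU}$ is subcritical for all small $t>0$ (then $\tau_+$ is the supremum of the $s$ with subcriticality on $(0,s)$). Suppose not: there is $t_j\downarrow0$ with $Q_{A,p,V+t_jU}$ not subcritical; after passing to a subsequence, either all these functionals are supercritical, or all are critical. In the supercritical case, pick $u_j\in\test(\Om)$ with $Q_{A,p,V+t_jU}[u_j]<0$ and set $\hat u_j:=|u_j|/\||u_j|\|_{L^p(D)}$; this is well defined because $|u_j|\not\equiv0$ on $\sprt\{U\}\subset D$ (otherwise $Q_{A,p,V+t_jU}[u_j]=Q_{A,p,V}[u_j]\ge0$), and then $\hat u_j\in\wdtest(\Om)$, $\hat u_j\ge0$, $\|\hat u_j\|_{L^p(D)}=1$, $Q_{A,p,V+t_jU}[\hat u_j]<0$, whence
\[
 0\le Q_{A,p,V}[\hat u_j]=Q_{A,p,V+t_jU}[\hat u_j]-t_j\!\int_\Om\! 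U\hat u_j^p\dx< t_j\!\int_\Om\! U^-\hat u_j^p\dx\le t_j\|U^-\|_{L^\infty(\Om)}\to0 .
\]
Thus $\{\hat u_j\}$ is a null sequence for $Q_{A,p,V}$ (with $K=D$), so by Theorem~\ref{crit} $\hat u_j\to c\phi$ in $L^p_{\rm loc}(\Om)$ for some $c>0$, hence $\int_\Om U\hat u_j^p\dx\to c^p\int_\Om U\phi^p\dx>0$; therefore $Q_{A,p,V}[\hat u_j]<-t_j\int_\Om U\hat u_j^p\dx<0$ for large $j$, contradicting $Q_{A,p,V}\geq0$. In the critical case, let $\phi_j$ be the ground state of $Q_{A,p,V+t_jU}$ — the unique positive solution of $Q'_{A,p,V+t_jU}[\phi_j]=0$ by Theorem~\ref{crit} — normalized by $\phi_j(x_0)=1$. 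Since $V+t_jU\to V$ in $M^q_{\rm loc}(p;\Om)$, the Harnack convergence principle (Proposition~\ref{Harnack:cp}) yields, along a subsequence, $\phi_j\to\phi$ in $C^{0,\beta}_{\rm loc}(\Om)$. For each $j$ pick a null sequence $\{v_k\}_k$ of $Q_{A,p,V+t_jU}$ with $v_k\to\phi_j$ in $L^p_{\rm loc}(\Om)$ (this is the meaning of ``$\phi_j$ is a ground state''); then
\[
 0\le Q_{A,p,V}[v_k]=Q_{A,p,V+t_jU}[v_k]-t_j\!\int_\Om\! Uv_k^p\dx\;\underset{k\to\infty}{\longrightarrow}\;-t_j\!\int_\Om\! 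U\phi_j^p\dx ,
\]
so $\int_\Om U\phi_j^p\dx\le0$ for every $j$, and letting $j\to\infty$ gives $\int_\Om U\phi^p\dx\le0$, a contradiction. Hence $Q_{A,p,V+tU}$ is nonnegative and not critical — i.e.\ subcritical — for all small $t>0$.

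I expect the main obstacle to lie in two borderline points rather than in a single hard computation. In the ``only if'' direction it is the equality case $\int_\Om U\phi^p\dx=0$: one has to observe that the null sequence of the \emph{critical} operator survives as a null sequence of the perturbed operator whenever the latter remains nonnegative, which is precisely what rules out subcriticality and pins down criticality. In the ``if'' direction it is the ``critical'' subcase, which cannot be closed with fixed test functions; the correct move is to pass to the \emph{ground states} $\phi_j$ of the perturbed operators and invoke the Harnack convergence principle to obtain $\phi_j\to\phi$, thereby transporting the sign inequality $\int_\Om U\phi_j^p\dx\le0$ to $\int_\Om U\phi^p\dx\le0$.
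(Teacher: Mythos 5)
Your proof is correct, and the ``only if'' direction is exactly the standard null-sequence argument (both the strictly negative and the equality case are handled properly). What differs substantially is the ``if'' direction. The route suggested by the paper's placement of this statement immediately after Corollary~\ref{corollary:convex:combination}, and by the papers \cite{PT1,PR} to which it defers, is this: set $\tau_+:=\sup\{t>0:\,Q_{A,p,V+tU}\mbox{ is nonnegative on }\test(\Om)\}$, show $\tau_+>0$ via the null-sequence/supercritical argument you use, and then for any $0<t<\tau_+$ pick $s\in(t,\tau_+)$ and apply Corollary~\ref{corollary:convex:combination} to $V_0=V$, $V_1=V+sU$: since $Q_{A,p,V+tU}=Q_{A,p,V_{\theta}}$ with $\theta=t/s\in(0,1)$ and both endpoints are nonnegative, the convexity corollary gives subcriticality on $(0,\tau_+)$ in one stroke, with no critical subcase to discuss. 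You instead decompose the negation into a supercritical and a critical subsequence and dispose of the critical one by passing to the perturbed ground states $\phi_j$ and invoking the Harnack convergence principle (Proposition~\ref{Harnack:cp}) to identify their limit with $\phi$. Both routes are valid; yours avoids Corollary~\ref{corollary:convex:combination} but pays with a heavier tool (Harnack convergence) and a case split, whereas the convexity route also directly identifies $\tau_+$ as the right endpoint of the nonnegativity interval. A minor cosmetic remark: the null sequence you invoke for $Q_{A,p,V+t_jU}$ converges in $L^p_{\rm loc}$ to a positive \emph{multiple} of the $\phi_j$ you normalized at $x_0$ rather than to $\phi_j$ itself; this does not affect the sign of $\int_\Om U\phi_j^p\dx$, but it is worth saying explicitly.
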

The following theorem extends the corresponding theorems in \cite{P,PR,PTT}; see some applications therein.
\begin{theorem}\label{theorem:Liouville}{\bf [Liouville comparison theorem]} 
Suppose that for $i=1,2,$ the functional $Q_{A_i,p,V_i}$ is nonnegative in $\Om$ with $A_i,~V_i$ satisfying hypothesis {\em(H0)} if $p\geq2,$ or {\em (H1)} if $1<p<2$. Suppose in addition that:
\begin{itemize}
 \item[(i)] $Q_{A_2,p,V_2}$ admits a ground state $\phi$ in $\Om$.
 \item[(ii)] The equation $Q'_{A_1,p,V_1}[u]=0$ in $\Om$ admits a weak subsolution $\psi$ with $\psi^+\neq0$.
 \item[(iii)] There exists $M>0$ such that the matrix $\big(M\phi(x)\big)^2A_1(x)-\big(\psi_+(x)\big)^{2}A_0(x)$ is nonnegative-definite in $\Rn$ for almost every $x\in\Om$.
 \item[(iv)] There exists $N>0$ such that $|\nabla \psi|^{p-2}_{A_{0}(x)} \leq N^{p-2}|\nabla \phi|^{p-2}_{A_{1}(x)}$ for almost every $x$ in $\Om\cap\{\psi>0\}$.
\end{itemize}
Then the functional $Q_{A_1,p,V_1}$ is critical in $\Om$, and $\psi$ is the unique positive supersolution of $Q'_{A_1,p,V_1}[u]=0$ in $\Om$.
\end{theorem}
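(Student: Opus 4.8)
The plan is to manufacture a null sequence for $Q_{A_1,p,V_1}$ out of the ground state $\phi$ of $Q_{A_2,p,V_2}$ and the subsolution $\psi$, and then read off the conclusion from Theorem~\ref{theorem:main} and Theorem~\ref{crit}. First I would reduce to the case $\psi\geq0$: since $\psi$ is a weak subsolution of $Q'_{A_1,p,V_1}[u]=0$, the function $-\psi$ is a weak supersolution, so Lemma~\ref{lemma:Kato} (with $\V=V_1$) shows that $(-\psi)^-=\psi_+$ is again a nonnegative weak subsolution, and $\psi_+\not\equiv0$ by hypothesis~(ii); by the regularity results of \S\ref{ssubsec:(p,A)-Laplacian} (local boundedness of subsolutions, and $C^{1,\gamma}_{\mathrm{loc}}$ regularity under {\rm(H1)} when $p<2$) we have $\psi_+\in\Wloc(\Om)\cap L^\infty_{\mathrm{loc}}(\Om)$, with $|\nabla\psi_+|\in L^\infty_{\mathrm{loc}}(\Om)$ for $1<p<2$. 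Replacing $\psi$ by $\psi_+$, assume henceforth $\psi\geq0$. Since $Q_{A_2,p,V_2}$ is critical with ground state $\phi$, fix a null sequence $\{u_k\}\subset\test(\Om)$ with $\|u_k\|_{L^p(K_0)}=1$ for a fixed $K_0\Subset\Om$ and $u_k\to\phi$ locally uniformly (cf. the remark following Theorem~\ref{theorem:main}), and put $w_k:=u_k/\phi\in\wtest(\Om)$. By Proposition~\ref{proposition:W1ploc:boundedness} applied to the positive solution $\phi$ of $Q'_{A_2,p,V_2}[u]=0$, together with Theorem~\ref{crit}, the sequence $\{w_k\}$ is bounded in $\Wloc(\Om)$, $\nabla w_k\to0$ in $L^p_{\mathrm{loc}}(\Om;\Rn)$, $w_k\to1$ locally uniformly, and
\[
 \mathcal I_2(w_k):=C(p)\begin{cases}\displaystyle\intom\phi^p|\nabla w_k|_{A_2}^p\dx, & p\geq2,\\[3mm]\displaystyle\intom|\nabla w_k|_{A_2}^2\big(|\nabla(\phi w_k)|_{A_2}+w_k|\nabla\phi|_{A_2}\big)^{p-2}\dx, & 1<p<2,\end{cases}
\]
satisfies $0\leq\mathcal I_2(w_k)\leq Q_{A_2,p,V_2}[u_k]\to0$.

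The candidate is $\psi_k:=\psi w_k\in\wtest(\Om)$, which is nonnegative and supported in $\sprt u_k$. Choosing $K_0$ so that $\|\psi\|_{L^p(K_0)}>0$, local uniform convergence gives $\|\psi_k\|_{L^p(K_0)}\to\|\psi\|_{L^p(K_0)}>0$ and $\psi_k\to\psi$ in $L^p_{\mathrm{loc}}(\Om)$, so after normalisation properties (a)--(b) of a null sequence hold. For property~(c) I must show $Q_{A_1,p,V_1}[\psi_k]\to0$. Since $\psi\geq0$ is a subsolution and $\psi w_k^p\in\wtest(\Om)$ is a legitimate nonnegative test function (density remark after Definition~\ref{def.weak:sol:Q'=0}), testing the subsolution inequality against $\psi w_k^p$ gives $\intom V_1(\psi w_k)^p\dx\leq-\intom|\nabla\psi|_{A_1}^{p-2}A_1\nabla\psi\cdot\nabla(\psi w_k^p)\dx$; combining this with $Q_{A_1,p,V_1}[\psi_k]\geq0$ (nonnegativity of $Q_{A_1,p,V_1}$, extended to $\wtest(\Om)$) and the algebraic identity underlying the proof of Proposition~\ref{proposition:W1ploc:boundedness} yields
\[
 0\leq Q_{A_1,p,V_1}[\psi_k]\leq R_k:=\intom\Big(|a_k+b_k|_{A_1}^p-|b_k|_{A_1}^p-p\,|b_k|_{A_1}^{p-2}A_1b_k\cdot a_k\Big)\dx,\qquad a_k:=\psi\,\nabla w_k,\ \ b_k:=w_k\,\nabla\psi.
\]

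It then remains to prove $R_k\to0$. Here I would use the elementary upper bounds complementing \eqref{vecineqLindqvist}: for each $x$ and the inner product $A_1(x)$ one has $|a+b|_{A_1}^p-|b|_{A_1}^p-p|b|_{A_1}^{p-2}A_1b\cdot a\leq C(p)\big(|a|_{A_1}^p+|a|_{A_1}^2|b|_{A_1}^{p-2}\big)$ for $p\geq2$, and $\leq C(p)\,|a|_{A_1}^2\big(|a|_{A_1}+|b|_{A_1}\big)^{p-2}$ for $1<p<2$. Substituting $a_k,b_k$ and then invoking hypotheses~(iii) and~(iv) — the former to majorise the $\psi$-weighted $A_1$-quantities by $\phi$-weighted $A_2$-quantities, the latter to majorise the coefficient $|\nabla\psi|^{p-2}$ by a multiple of $|\nabla\phi|^{p-2}$ — and using that $w_k\to1$ locally uniformly while $\{\nabla w_k\}$ is $L^p_{\mathrm{loc}}$-bounded with $\nabla w_k\to0$, I expect an estimate of the form $R_k\leq c\,\mathcal I_2(w_k)+o(1)\to0$. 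Hence $\{\psi_k\}$ is a null sequence for $Q_{A_1,p,V_1}$; by Theorem~\ref{theorem:main}(i), $Q_{A_1,p,V_1}$ is critical in $\Om$, and by Theorem~\ref{crit} the null sequence $\{\psi_k\}$ converges in $L^p_{\mathrm{loc}}$ (and a.e.) to the unique positive (regular if $p<2$) supersolution of \eqref{Q'=0}, namely the ground state. Since that limit is a positive multiple of $\psi$, it follows that $\psi=\psi_+>0$ and that $\psi$ is the asserted unique positive supersolution of $Q'_{A_1,p,V_1}[u]=0$ in $\Om$.

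The step I expect to be the main obstacle is the passage $R_k\leq c\,\mathcal I_2(w_k)+o(1)$: one must split $\Om$ according to whether $\psi$ is comparable with $\phi$ or not, control the $\phi$-weighted $A_2$-integrals over the growing supports $\sprt u_k$, and in the range $1<p<2$ keep the nonlinear factors $|\nabla(\psi w_k)|_{A_1}$ and $|\nabla\psi|_{A_1}$ under control — this is exactly where hypotheses~(iii)--(iv), hypothesis {\rm(H1)} for $p<2$ (guaranteeing $\psi,|\nabla\psi|\in L^\infty_{\mathrm{loc}}$), and the local uniform convergence $w_k\to1$ all enter. A secondary technical point is to justify that the null sequence for the critical functional $Q_{A_2,p,V_2}$ may be taken in $\test(\Om)$ and converging locally uniformly to $\phi$, so that $w_k=u_k/\phi$ is compactly supported and bounded, making all the integrations above legitimate.
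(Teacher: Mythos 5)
Your overall strategy is the correct one and is the same as in the sources the paper cites (\cite{PTT,PR}): manufacture a null sequence for $Q_{A_1,p,V_1}$ of the form $\psi_k=\psi\,w_k$ with $w_k=u_k/\phi$, test the subsolution inequality for $\psi$ against $\psi w_k^{p}$, and read off criticality and uniqueness of the positive supersolution from Theorem~\ref{theorem:main} and Theorem~\ref{crit}. The reduction to $\psi\geq0$ via Lemma~\ref{lemma:Kato}, the normalisation issue, and the final identification of the limit as a multiple of $\psi$ are all correct.

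However, the step you yourself single out as the ``main obstacle,'' namely $R_k\leq c\,\mathcal I_2(w_k)+o(1)$, is a genuine gap and, as you have set it up, does \emph{not} go through for $p\geq2$. After applying (iii)--(iv), the two contributions to $R_k$ are $\int_\Om\phi^p|\nabla w_k|_{A_2}^p\,\mathrm dx$ and the cross term $\int_\Om\phi^2 w_k^{p-2}|\nabla w_k|_{A_2}^2|\nabla\phi|_{A_2}^{p-2}\,\mathrm dx$. For $p\geq2$ your $\mathcal I_2(w_k)$ (the quantity Proposition~\ref{proposition:W1ploc:boundedness} controls) contains \emph{only the first} of these, and the cross term cannot be absorbed by H\"older against $\int\phi^p|\nabla w_k|_{A_2}^p$ without producing a factor $\int w_k^p|\nabla\phi|_{A_2}^p$ over the growing supports of $u_k$, which you have no a~priori bound for. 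What is missing is the strengthened form of the Lindqvist-type inequality: for \emph{every} $p>1$ one has
\[
|\alpha|_A^p-|\beta|_A^p-p|\beta|_A^{p-2}A\beta\cdot(\alpha-\beta)\ \geq\ C(p)\,|\alpha-\beta|_A^2\big(|\alpha|_A+|\beta|_A\big)^{p-2},
\]
which for $p\geq2$ implies, with $\alpha=\nabla u_k$ and $\beta=w_k\nabla\phi$, that $Q_{A_2,p,V_2}[u_k]$ dominates \emph{both}
$\int\phi^p|\nabla w_k|_{A_2}^p\,\mathrm dx$ \emph{and} $\int\phi^2 w_k^{p-2}|\nabla w_k|_{A_2}^2|\nabla\phi|_{A_2}^{p-2}\,\mathrm dx$ (after using $|\alpha|_A+|\beta|_A\geq\max\{\phi|\nabla w_k|_{A_2},\,w_k|\nabla\phi|_{A_2}\}$). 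Only with this refinement do both pieces of $R_k$ tend to $0$; the paper's \eqref{vecineqLindqvist} records the weaker bound $\geq C(p)|\alpha-\beta|_A^p$ in the $p\geq2$ case, so you would need to upgrade it. A parallel refinement is needed for $1<p<2$ to justify the pointwise inequality $(|a_k|_{A_1}+|b_k|_{A_1})^{p-2}\leq c\,(|\nabla(\phi w_k)|_{A_2}+w_k|\nabla\phi|_{A_2})^{p-2}$, which does not follow from (iii)--(iv) alone at points where $\psi|\nabla w_k|_{A_1}$ is much smaller than $\phi|\nabla w_k|_{A_2}$.

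Two secondary points, also hand-waved in your writeup, need to be settled before the argument is complete: you should confirm that under (H0) (resp.\ (H1) when $1<p<2$) the nonnegative subsolution $\psi_+$ is locally bounded (resp.\ locally Lipschitz), since the regularity statements collected in \S\ref{ssubsec:(p,A)-Laplacian} are formulated for solutions and nonnegative supersolutions, not subsolutions; and you should make precise the existence of a null sequence in $\test(\Om)$ converging locally uniformly to $\phi$, since Lemma~\ref{lemma:unif:conv:gs} produces ground states in $\W(\om_i)$ rather than in $\test(\Om)$, so an approximation step (using the Morrey--Adams inequality to pass from $\W$ to $\test$) is required.
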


We close this section by showing that the ground state is a locally-uniform limit of a null sequence. This is a generalization of the second statement of \cite[Theorem 6.1 (2)]{PR}. We give a detailed proof, as it utilizes many of the results presented above.

\begin{lemma}\label{lemma:unif:conv:gs}
Suppose $Q_{A,p,V}$ is critical in $\Om$ with $A$ and $V$  satisfying hypothesis {\em(H0)} if $p\geq2,$ or {\em (H1)} if $1<p<2$. Then $Q_{A,p,V}$ admits a null sequence that converges locally uniformly to the ground state.
\end{lemma}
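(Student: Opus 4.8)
The plan is to extract a null sequence with good local properties, then upgrade its $L^p_{\rm loc}$ convergence to locally uniform convergence using the regularity theory for the equation~\eqref{Q'=0}. First I would invoke part~\emph{(i)} of Theorem~\ref{theorem:main} (via the identity \eqref{c_K=0}) to produce, for a fixed Lipschitz domain $K\Subset\Om$, a sequence $\{u_k\}\subset\test(\Om)$ with $u_k\ge 0$, $\|u_k\|_{L^p(K)}=1$, and $Q_{A,p,V}[u_k]\to 0$. By Theorem~\ref{crit} this null sequence converges, after passing to a subsequence, in $L^p_{\rm loc}(\Om)$ and a.e. in $\Om$ to the ground state $\phi$, which is the unique positive (regular if $p<2$) solution of \eqref{Q'=0}. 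So the issue is purely one of improving the mode of convergence.

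The key step is a Caccioppoli/regularity argument applied not directly to $u_k$ (which are merely compactly supported test functions and need not solve any equation) but to a modified sequence. Here I would follow the strategy of Proposition~\ref{Harnack:cp}: fix $\om'\Subset\om\Subset\Om$ and, using $Q_{A,p,V}[u_k]\to 0$ together with the Morrey--Adams theorem (Theorem~\ref{theorem:uncertainty}) and hypothesis~(H0)/(H1), derive a uniform $W^{1,p}(\om)$ bound on $\{u_k\}$ (the $L^p$ bound on $\om$ coming from \eqref{Lpwkestimate}-type estimates via $w_k=u_k/\phi$ and the weak Harnack inequality for $\phi$, since $\nabla w_k\to 0$ in $L^p_{\rm loc}$ by Proposition~\ref{proposition:W1ploc:boundedness}). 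Then one wants a local equicontinuity statement. The cleanest route: observe that $\psi_k:=u_k/\phi$ satisfies $\nabla\psi_k\to 0$ in $L^p_{\rm loc}$ and $\psi_k\to c>0$ in $W^{1,p}_{\rm loc}$ (as in the proof of Theorem~\ref{crit}, $c=1/\|\phi\|_{L^p(K)}$); hence $u_k=\psi_k\phi$. Since $\phi$ is continuous (indeed $C^\gamma$, or $C^{1,\gamma}$ if $p<2$, by Remark~\ref{remark:regularity} and Remark~\ref{remark:local grad bound}) and locally bounded away from zero by Harnack, it suffices to show $\psi_k\to c$ locally uniformly. For this I would truncate/mollify: replace $\psi_k$ by $\tilde\psi_k$ solving a suitable equation on each $\om'$ — or, more economically, just note that $\nabla\psi_k\to 0$ in $L^p_{\rm loc}$ plus the uniform $L^\infty_{\rm loc}$ bound (again from Harnack applied to $\phi$ and the $L^p$ normalization) gives, via Morrey's inequality when $p>n$ directly, and for $p\le n$ via a Poincaré-plus-De Giorgi iteration on the equation that $w_k:=\psi_k-c$ "almost" satisfies, that $\psi_k\to c$ uniformly on compacta.

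The main obstacle I anticipate is precisely that $u_k$ are arbitrary test functions, not solutions, so one cannot apply the local Hölder estimate \eqref{eq_local_holder} to $u_k$ themselves. The way around this is to not work with $u_k$ directly for the regularity step: instead, one constructs an auxiliary \emph{solution} sequence. Concretely, I would follow the device used in \cite{PR}: fix an exhaustion $\{\om_i\}$ of $\Om$ by Lipschitz domains with $K\Subset\om_1$, and for each $i$ solve a Dirichlet problem $Q'_{A,p,V+1/i}[v]=f_i$ in $\om_i$ with $v=0$ on $\partial\om_i$ and $f_i\ge 0$ supported in $\om_i\setminus\overline{\om}_{i-1}$ (solvable by Theorem~\ref{theorem:GL}, since $\lambda_1(Q_{A,p,V+1/i};\om_i)\ge 1/i>0$), normalized by $v_i(x_0)=1$ at a reference point $x_0\in K$; then the Harnack convergence principle (Proposition~\ref{Harnack:cp}) gives $v_i\to$ a positive solution of \eqref{Q'=0} in $\Om$, which by Theorem~\ref{crit} must be a constant multiple of $\phi$, and the convergence is in $C^{0,\beta}_{\rm loc}(\Om)$ — hence locally uniform. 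Finally, cutting off $v_i$ appropriately (multiply by a cutoff $\chi_i\in\test(\om_i)$ with $\chi_i=1$ on $\om_{i-1}$) and rescaling so that the $L^p(K)$-norm is $1$ produces the desired null sequence: the $Q_{A,p,V}$-energy of the cutoff version tends to $0$ because on $\om_{i-1}$ the truncation is inert and $v_i$ nearly solves the equation, while the energy contribution from the annular transition region is controlled by the uniform local $W^{1,p}$ bounds and tends to $0$ as $i\to\infty$ by dominated convergence (using $V\in M^q_{\rm loc}(p;\Om)$ and Theorem~\ref{theorem:uncertainty}). Collecting these, the rescaled cutoffs $\{c_i\chi_i v_i\}$ form a null sequence converging locally uniformly to $\phi$, which is the claim.
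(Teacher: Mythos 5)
Your first two paragraphs chase a dead end that you yourself partly diagnose: since the members of a null sequence are arbitrary nonnegative test functions and not solutions of any equation, the local H\"older estimate \eqref{eq_local_holder} does not apply to them, and there is no way to upgrade $L^p_{\rm loc}$ (together with $W^{1,p}_{\rm loc}$) convergence to $C^0_{\rm loc}$ convergence. So the right strategy, which you move to in the third paragraph, is to build a \emph{fresh} null sequence out of solutions; this is also the paper's strategy. However, the concrete construction you propose has a genuine gap. The AP-theorem device (solve $Q'_{A,p,V+1/i}[v]=f_i$ in $\om_i$ with $f_i$ supported in $\om_i\setminus\overline{\om}_{i-1}$, normalize $v_i(x_0)=1$) is completely insensitive to criticality versus subcriticality: the same recipe produces, in the subcritical case, functions $v_i$ converging to a positive minimal Green function. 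If the cut-offs $\chi_i v_i$ always formed a null sequence, every nonnegative $Q_{A,p,V}$ would admit one, contradicting subcriticality. And indeed there is no reason for the energy of $\chi_i v_i$ to vanish: $Q_{A,p,V}[v_i;\om_i]=\int f_i v_i - \tfrac{1}{i}\int v_i^p$, and since $f_i$ is rescaled \emph{a posteriori} to force $v_i(x_0)=1$, the term $\int f_i v_i$ is not under control; moreover, the transition-region contribution $\int_{\om_i\setminus\om_{i-1}}v_i^p|\nabla\chi_i|^p\dx$ lives on an annulus receding to infinity, where no uniform bound on $v_i$ is available. Dominated convergence does not apply.

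The paper's construction avoids all of this by perturbing the potential rather than the right-hand side. One picks a fixed nonnegative $U\in\test(\om_1)\setminus\{0\}$ and, for each $i$, invokes Proposition~\ref{proposition:used in loc unif convergence} to find $t_i>0$ such that $Q_{A,p,V-t_iU}$ is \emph{critical} in $\om_i$. Its ground state $\phi_i$ (normalized at $x_0$) is then the principal eigenfunction of the Dirichlet problem in $\om_i$ with eigenvalue $0$, so $\phi_i\in W_0^{1,p}(\om_i)$ automatically -- no cut-off needed -- and the energy satisfies the exact identity
\begin{equation*}
Q_{A,p,V}[\phi_i]=t_i\int_{\om_1}U\phi_i^p\dx,
\end{equation*}
whose second factor is uniformly bounded by the Harnack inequality because $U$ has fixed compact support. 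The sequence $\{t_i\}$ is shown to be strictly decreasing (Proposition~\ref{proposition:superdomain}), and then criticality of $Q_{A,p,V}$ in $\Om$ forces $t_\infty=\lim t_i=0$ via Proposition~\ref{proposition:superpotential}; this is exactly where the criticality hypothesis enters. Finally the Harnack convergence principle (Proposition~\ref{Harnack:cp}), as you guessed, upgrades the convergence of $\{\phi_i\}$ to a positive solution to locally uniform convergence, and Theorem~\ref{crit} identifies this limit with the ground state. The moral: the missing ingredient in your argument is an exact energy identity expressing $Q_{A,p,V}[\phi_i]$ through a quantity that vanishes because of criticality, and this is supplied by taking principal eigenfunctions of shifted operators rather than Green-function-type approximations.
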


\begin{proof} Let $\{\om_i\}_{i\in\N}$ be a sequence of Lipschitz domains such that $\om_i\Subset\Om$, $\om_i\Subset\om_{i+1}$ for $i\in\N$, and $\cup_{i\in\N}\om_i=\Om$. We fix $x_0\in\om_1$ and a nonnegative $U\in\test(\Om)\setminus\{0\}$ with $\sprt\{U\}\subset\om_1$. By Proposition~\ref{proposition:used in loc unif convergence}, for every $i\in\N$ there exists $t_i>0$, such that the functional $Q_{A,p,V-t_iU}$ is critical in $\om_i$. For $i\in\N$ we denote by $\phi_i\in W^{1,p}(\om_i)$ the corresponding ground states, normalized by $\phi_i(x_0)=1$. The sequence of $t_i$'s is strictly decreasing with $i$. Indeed, we have by Proposition~\ref{proposition:superdomain} that $Q_{A,p,V-t_iU}$ has to be supercritical in $\om_{i+1}$. There exists thus $u\in\test(\om_{i+1})$ such that $Q_{A,p,V-t_iU}[u;\om_{i+1}]<0$. This in turn implies that
 \be\nonumber
  Q_{A,p,V-t_{i+1}U}[u;\om_{i+1}]
   <
    (t_i-t_{i+1})\int_{\om_{i+1}}U|u|^p\dx.
 \ee
The criticality of $Q_{A,p,V-t_{i+1}U}$ in $\om_{i+1}$ implies by definition that $Q_{A,p,V-t_{i+1}U}$ is nonnegative in $\om_{i+1}$ and thus $t_i>t_{i+1}$. Setting $t_\infty:=\lim_{i\to\infty}t_i$, by Harnack's convergence principle (Proposition~\ref{Harnack:cp}), up to a subsequence, $\{\phi_i\}_{i\in\N}$ converges locally uniformly to a positive solution $v$ of the equation $Q'_{A,p,V-t_\infty U}[u]=0$ in $\Om$. The AP theorem (Theorem~\ref{theorem:AP}) implies that $Q_{A,p,V-t_\infty U}$ is nonnegative in $\Om$. Clearly, $t_\infty\geq0$. Let us show that in fact $t_\infty=0$. If not then $V-t_\infty U\leq V$ a.e. in $\Om$, and since by our assumptions $Q_{A,p,V}$ is critical in $\Om$, part $b)$ of Proposition~\ref{proposition:superpotential} gives that $Q_{A,p,V-t_\infty U}$ is supercritical, contradicting its nonnegativity.

Summarizing, for each $i\in\N$ we have obtained a ground state $\phi_i\in W^{1,p}(\om_i)$ of $Q_{A,p,V-t_iU}$ in $\om_i$, and the sequence $\{\phi_i\}_{i\in\N}$ converges locally uniformly to a positive solution $v$ of the equation (\ref{Q'=0}) in $\Om$. To conclude we will show that $\{\phi_i\}_{i\in\N}$ is in fact a null sequence. Consider the principal eigenvalue $\lambda_1(Q_{A,p,V-t_iU_i};\om_i);~i\in\N$, which is nonnegative. Suppose that for some $i\in\N$ we had $\lambda_1(Q_{A,p,V-t_iU_i};\om_i)>0$. Then the principal eigenfunction $v^{\om_i}_1\in W_0^{1,p}(\om_i)$ would be a positive, strict supersolution of the equation $Q'_{A,p,V-t_iU}[v;\om_i]=0$, which contradicts the fact that $\phi_i$ is the unique positive supersolution and also a solution of $Q'_{A,p,V-t_iU}[v;\om_i]=0$ (see Theorem~\ref{crit}). Thus $\lambda_1(Q_{A,p,V-t_iU_i};\om_i)=0$ for each $i\in\N$, and since $\phi_i$ is also the unique positive solution of $Q'_{A,p,V-t_iU}[v;\om_i]=0$ (see again Theorem~\ref{crit}) we conclude $\phi_i=v^{\om_i}_1\in W_0^{1,p}(\om_i)$. Consequently,
 \be\nonumber
  \lim_{i\to\infty}Q_{A,p,V}[\phi_i]
   =
    \lim_{i\to\infty}t_i\int_{\Om_1}U\phi_i^p\dx
     =0.
 \ee
After a further normalization, we may assume that for some $\emptyset\neq K\Subset\Om$, there also holds $\|\phi_i\|_{L^p(K)}=1$ for all $i\in\N$.
\end{proof}
\section{Positive solutions of minimal growth at infinity}\label{minimal:growth}
The present section is devoted to the existence of positive solutions of the equation $Q'_{A,p,V}[v]=0$ in $\Om\setminus\{x_0\}$ that have minimal growth at infinity in $\Om$, and their role in criticality theory. For this purpose we extend in the following subsection the \textit{weak comparison principle} (WCP) (cf. \cite{GMdL,PR}).
Subsection~\ref{ssubsec:isolated} is devoted to the study of the behaviour of positive solutions near an isolated singularity. Finally, in \S\ref{ssubsec:minimal} we study positive solutions of minimal growth at infinity in $\Om$, and prove the last two parts of the Main Theorem.
\subsection{Weak comparison principle (WCP)}\label{ssubsec:prelims:minimal}
We prove first a simple version of the WCP that holds true for the $p$-Laplacian operator with a {\em nonnegative} potential (see for instance \cite[Theorem 2.4.1]{PS}).
\begin{lemma}\label{lemma:wcp+}
Let $\om$ be a Lipschitz domain in $\Rn$. Suppose that $A$ is a uniformly elliptic and bounded matrix in $\om$, and $\G,~\mathcal{V}\in M^q(p;\om)$ with $\mathcal{V}\geq0$ a.e. in $\Om$. Suppose that $v_1$ (respectively, $v_2$) is a subsolution (respectively, supersolution) of the equation
 \be\label{Q'=f:positive}
  Q'_{A,p,\mathcal{V}}[v]=\G\hspace{1em}\mbox{in }\om.
 \ee
If $v_1\leq v_2$ a.e. on $\partial\om$ in the trace sense, then $v_1\leq v_2$ a.e. in $\om$.
\end{lemma}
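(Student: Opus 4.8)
The plan is to test the two weak (in)equalities with the admissible function $(v_1-v_2)^+$ and exploit both the monotonicity of the $(p,A)$-gradient map and the nonnegativity of $\mathcal V$. First I would set $w:=(v_1-v_2)^+$ and check that $w\in\W(\om)$: since $v_1\le v_2$ on $\partial\om$ in the trace sense, $w$ has zero trace on $\partial\om$, and $w\in W^{1,p}(\om)$ because $v_1,v_2\in\Wloc(\om)$ together with the equation forces them into $W^{1,p}$ on the relevant subdomain (or one argues on exhausting Lipschitz subdomains and passes to the limit). A minor technical point I would address is that the weak formulations are a priori stated for test functions in $\test(\om)$; by the density remark following Definition~\ref{def.weak:sol:Q'=0} (and the Morrey--Adams theorem, which makes the potential term continuous on $\W(\om)$) they extend to nonnegative $u\in\W(\om)$, so $w$ is a legitimate test function.

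Next I would subtract the two formulations. Using $w$ as a test function in the subsolution inequality for $v_1$ and in the supersolution inequality for $v_2$, and subtracting, yields
\be\nonumber
 \int_{\{v_1>v_2\}}\big(|\nabla v_1|_A^{p-2}A\nabla v_1-|\nabla v_2|_A^{p-2}A\nabla v_2\big)\cdot\nabla(v_1-v_2)\dx
  +\int_{\{v_1>v_2\}}\mathcal V\big(|v_1|^{p-2}v_1-|v_2|^{p-2}v_2\big)(v_1-v_2)\dx\le 0,
\ee
because the two copies of $\int\G w\dx$ cancel. On the set $\{v_1>v_2\}$ the monotonicity inequality \eqref{cauchy-schwartz} (which is valid for the $(p,A)$-map, as recorded in the proof of Proposition~\ref{Harnack:cp}) shows the first integrand is nonnegative; moreover $t\mapsto |t|^{p-2}t$ is strictly increasing, so $\big(|v_1|^{p-2}v_1-|v_2|^{p-2}v_2\big)(v_1-v_2)\ge0$ there, and together with $\mathcal V\ge0$ a.e.\ the second integrand is nonnegative as well. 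Hence both integrals vanish; in particular the first one is zero.

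Finally, from the vanishing of the first integral and the strict monotonicity in \eqref{cauchy-schwartz} (the chain of inequalities there is an equality only when $|\nabla v_1|_A=|\nabla v_2|_A$ and the vectors are aligned, forcing $\nabla v_1=\nabla v_2$ a.e.\ on $\{v_1>v_2\}$), I conclude $\nabla w=0$ a.e.\ in $\om$, so $w$ is constant on each component; since $w\in\W(\om)$ has zero boundary trace, $w\equiv0$, i.e.\ $v_1\le v_2$ a.e.\ in $\om$. The main obstacle I anticipate is purely the bookkeeping around admissibility of $w=(v_1-v_2)^+$ as a test function --- namely justifying the $W^{1,p}$ regularity and the zero trace, and the density extension of the weak formulations --- rather than any genuine analytic difficulty; the core of the argument is the standard monotonicity-plus-nonnegative-potential computation, here carried out with the anisotropic Young-type inequality \eqref{cauchy-schwartz} in place of its Euclidean counterpart.
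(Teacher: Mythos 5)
Your proof is correct and follows essentially the same route as the paper's: you test with $(v_1-v_2)^+$, which is precisely the paper's $(v_2-v_1)^-$, subtract the two weak formulations so the $\G$ contributions cancel, invoke the monotonicity inequality \eqref{cauchy-schwartz} for the principal part and the strict monotonicity of $t\mapsto|t|^{p-2}t$ together with $\mathcal V\geq0$ for the potential term, and conclude $\nabla(v_2-v_1)^-=0$ a.e., whence the zero trace forces $(v_2-v_1)^-\equiv0$. The only cosmetic difference is that you spell out the density/admissibility bookkeeping for the test function, which the paper handles implicitly via its earlier remark after Definition~\ref{def.weak:sol:Q'=0}.
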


\begin{proof}
 Our assumption that $v_1\leq v_2$ a.e. on $\partial\om$, implies $(v_2-v_1)^-\in W_0^{1,p}(\om)$. Using this as a test function in the definitions of $v_1,~v_2$ being respectively sub/supersolutions of (\ref{Q'=f:positive}), and subtracting the two resulting inequalities we obtain
 \bea\nonumber
  &  &\hspace{-2em}\int_\om\big(|\nabla v_1|^{p-2}_AA\nabla v_1-|\nabla v_2|^{p-2}_AA\nabla v_2\big)\cdot\nabla(v_2-v_1)^-\dx
   \\ \nonumber &  &\hspace{6em}+
    \int_\om\mathcal{V}\big(|v_1|^{p-2}v_1-|v_2|^{p-2}v_2\big)(v_2-v_1)^-\dx
     \leq 0.
 \eea
In other words
 \bea\nonumber
  &  &\hspace{-2em}\int_{\{v_2<v_1\}}\Big(\big(|\nabla v_1|^{p-2}_AA\nabla v_1-|\nabla v_2|^{p-2}_AA\nabla v_2\big)\cdot\big(\nabla v_1-\nabla v_2\big)\dx
   \\ \nonumber &  &\hspace{6em}+
    \mathcal{V}\big(|v_1|^{p-2}v_1-|v_2|^{p-2}v_2\big)(v_1-v_2)\Big)\dx
     \leq 0.
 \eea
By (\ref{cauchy-schwartz}) we have that each term of the sum of the integrand is nonnegative with equality if and only if $\nabla v_1=\nabla v_2$ a.e. in the set $\{v_2<v_1\}$, or what is the same $(v_2-v_1)^-=c\geq0$ a.e. in $\om$. Since $(v_2-v_1)^-=0$ a.e. on $\partial\om$ in the trace sense, we conclude $v_1\leq v_2$ a.e. in $\om$.
\end{proof}
The following proposition deals with the sub/supersolution technique.
\begin{proposition}\label{Diaz}
Let $\om$ be a Lipschitz domain in $\Rn$. Assume that $A$ is a uniformly elliptic and bounded matrix in $\om$, and $g,~V\in M^q(p;\om)$, where $g\geq 0$ a.e. in $\om$. Let $f,~\varphi,~\psi\in W^{1,p}(\om)\cap C(\bar{\om})$, where $f\geq 0$ a.e. in $\om$, and
\[\left\{
 \begin{array}{lll}
  Q'_{A,p,V}[\psi]\leq g\leq Q'_{A,p,V}[\varphi] & \mbox{in $\om$, in the weak sense}
   \\[2pt]
  \psi\leq f\leq\varphi & \mbox{on }\partial\om,
   \\[2pt]
  0 \leq \psi\leq\varphi & \mbox{in }\om.
 \end{array}
\right.
\]
Then there exists a nonnegative solution $u\in W^{1,p}(\om)\cap C(\bar{\om})$ of
\be\label{Q'=f}
 \left\{
 \begin{array}{ll}
   Q'_{A,p,V}[u]=g & \mbox{in }\om,
    \\[8pt]
   u=f & \mbox{on }\partial\om,
 \end{array}
\right.
\ee
such that $\psi\leq u\leq\varphi$ in $\om$.

Moreover, if $f>0$ a.e. in $\partial\om$, then the solution $u$ is the unique solution of \eqref{Q'=f}.
\end{proposition}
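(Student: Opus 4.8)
The plan is to use the monotone iteration (sub/supersolution) method combined with the existence result for the Dirichlet problem contained in Theorem~\ref{theorem:GL} and the weak comparison principle of Lemma~\ref{lemma:wcp+}. Since $V$ need not be nonnegative, the first step is to absorb the lower-order term: choose a constant $k>0$, large enough that $V+k\geq \lambda_0>0$ in a suitable sense on $\om$ (more precisely, so that $\lambda_1(Q_{A,p,V+k};\om)>0$; this is possible because $\lambda_1(Q_{A,p,V+k};\om)=\lambda_1(Q_{A,p,V};\om)+k$ by the Rayleigh-Ritz formula \eqref{first:eigenvalue}). Rewrite \eqref{Q'=f} as the fixed-point problem $Q'_{A,p,V+k}[u]=g+k|u|^{p-2}u$ in $\om$, $u=f$ on $\partial\om$. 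Then define the iteration: set $u_0:=\varphi$, and given $u_j$, let $u_{j+1}\in W^{1,p}(\om)$ be the unique solution of
\be\nonumber
 Q'_{A,p,V+k}[u_{j+1}]=g+k|u_j|^{p-2}u_j \quad\mbox{in }\om,\qquad u_{j+1}=f\mbox{ on }\partial\om,
\ee
which exists and is unique by $\alpha_3\Rightarrow\alpha_5$ of Theorem~\ref{theorem:GL} applied to the operator $Q'_{A,p,V+k}$ (whose principal eigenvalue is positive), after noting that the right-hand side is nonnegative since $g\geq0$ and, inductively, $u_j\geq\psi\geq0$; the continuity $u_{j+1}\in C(\bar\om)$ follows from the regularity theory (Remark~\ref{remark:regularity}) together with the continuity of $f$ on $\partial\om$ and a barrier at the boundary.

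Next I would establish monotonicity and the two-sided bound by induction: $\psi\leq u_{j+1}\leq u_j\leq\varphi$ for all $j$. Indeed, since $\varphi$ is a supersolution of \eqref{Q'=f}, it satisfies $Q'_{A,p,V+k}[\varphi]\geq g+k\varphi^{p-1}=g+k|u_0|^{p-2}u_0$, so $\varphi$ is a supersolution of the equation defining $u_1$ while $u_1=f\leq\varphi$ on $\partial\om$; Lemma~\ref{lemma:wcp+} (applied with potential $\mathcal V=V+k$, which is such that the operator satisfies the hypotheses because $\lambda_1>0$ — alternatively one first shrinks to the genuinely nonnegative-potential case, but the comparison principle in the form needed here for $V+k$ follows from the same monotonicity inequality \eqref{cauchy-schwartz} once positivity of $\lambda_1$ is used to handle the sign of the potential term) gives $u_1\leq\varphi$. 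Similarly, $\psi$ being a subsolution gives $\psi\leq u_1$. For the monotonicity step $u_{j+1}\leq u_j$: subtract the equations for $u_{j+1}$ and $u_j$; the right-hand sides satisfy $g+k|u_j|^{p-2}u_j\leq g+k|u_{j-1}|^{p-2}u_{j-1}$ because $t\mapsto|t|^{p-2}t$ is increasing and $u_j\leq u_{j-1}$; both have boundary data $f$; apply Lemma~\ref{lemma:wcp+} again. The lower bound $u_{j+1}\geq\psi$ is handled symmetrically.

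Having a monotone sequence trapped between $\psi$ and $\varphi$, I would pass to the limit: $u_j\downarrow u$ pointwise a.e., with $\psi\leq u\leq\varphi$, and $u=f$ on $\partial\om$. A uniform $W^{1,p}(\om)$ bound comes from testing the equation for $u_{j+1}$ with $u_{j+1}-f$ and using $(\mathrm E)$, the Morrey-Adams theorem (Theorem~\ref{theorem:uncertainty}-$(ii)$), the uniform $L^\infty$ bound $\|u_j\|_{L^\infty(\om)}\leq\|\varphi\|_{L^\infty(\om)}$, and Young's inequality — exactly the kind of Caccioppoli estimate already carried out in the proof of Proposition~\ref{Harnack:cp}. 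Then $u_j\rightharpoonup u$ weakly in $W^{1,p}(\om)$, and to upgrade to strong convergence of the gradients (so that one may pass to the limit in the nonlinear principal part) I would test the difference of consecutive equations with $u_j-u$ and invoke the monotonicity inequality \eqref{cauchy-schwartz} together with Maz'ya's lemma, precisely as in the proof of \eqref{weak:conv:principal} in Proposition~\ref{Harnack:cp}; the potential and right-hand-side terms converge by dominated convergence using the $L^\infty$ bound and $|V|\in L^1_{\rm loc}$. This yields that $u$ solves $Q'_{A,p,V+k}[u]=g+k|u|^{p-2}u$, i.e. $Q'_{A,p,V}[u]=g$, weakly in $\om$; continuity $u\in C(\bar\om)$ follows since $u$ is squeezed between $\psi$ and $\varphi$ which agree with $f$ continuously on $\partial\om$, together with interior H\"older continuity from Remark~\ref{remark:regularity}.

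For the final uniqueness claim when $f>0$ a.e.\ on $\partial\om$: suppose $u_1,u_2$ are two nonnegative solutions of \eqref{Q'=f}. By the weak Harnack inequality (Remark~\ref{remark:harnack_supersol}, or Theorem~\ref{theorem:harnack} when $p>n$) applied to the nonnegative supersolution-type bound, $u_1,u_2$ are bounded away from zero on compact subsets of $\om$, and since $f>0$ on $\partial\om$ they are in fact positive and bounded away from zero up to the boundary; thus $u_1,u_2\in W^{1,p}(\om)$ are positive with $u_1=u_2=f>0$ on $\partial\om$. Now apply Lemma~\ref{lemma:lind}-$(iii)$ with $V_1=V_2=V$ and $g_1=g_2=g$: the left-hand side becomes $\int_\om\big(g u_1^{1-p}-g u_2^{1-p}\big)(u_1^p-u_2^p)\dx$. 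Since $g\geq0$ and $t\mapsto t^{1-p}$ is decreasing while $t\mapsto t^p$ is increasing, the integrand is $\leq0$ pointwise, so the left-hand side is $\leq0$; but the right-hand side of Lemma~\ref{lemma:lind}-$(iii)$ is $\geq0$, forcing it to vanish, hence $\big|\nabla\log(u_1/u_2)\big|_A=0$ a.e., so by $(\mathrm E)$, $u_1=cu_2$ for some constant $c>0$; the common boundary value $f$ then gives $c=1$.

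\textbf{Main obstacle.} The delicate point is the passage to the limit in the nonlinear principal part $|\nabla u_j|_A^{p-2}A\nabla u_j$: weak $W^{1,p}$ convergence alone is insufficient, and one must produce strong convergence of $\{\nabla u_j\}$ in $L^p_{\rm loc}$. This is exactly the role of the monotonicity inequality \eqref{cauchy-schwartz} plus Maz'ya's lemma, and the argument is a direct transcription of the one already used in Proposition~\ref{Harnack:cp}; the potential term, being only in the Morrey space $M^q(p;\om)$ and of indefinite sign, is controlled by the uniform $L^\infty$ bound on the iterates together with the Morrey-Adams inequality and dominated convergence, so no new difficulty arises there.
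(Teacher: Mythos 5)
Your overall architecture---monotone iteration, a comparison lemma at each step, $W^{1,p}$ compactness plus the monotonicity inequality \eqref{cauchy-schwartz} and Maz'ya's lemma to pass to the limit in the principal part, and uniqueness from Lemma~\ref{lemma:lind}-$(iii)$---matches the paper's, and the uniqueness paragraph is essentially identical to the paper's. The decisive problem is your normalization $V\mapsto V+k$. You want Lemma~\ref{lemma:wcp+}, but that lemma is proved under the \emph{pointwise} hypothesis $\mathcal V\geq0$, and a constant shift does not render a Morrey potential nonnegative: $V\in M^q(p;\om)$ is typically unbounded below, so $V+k$ is still sign-changing a.e.\ no matter how large $k$ is. The parenthetical claim that positivity of $\lambda_1(Q_{A,p,V+k};\om)$ can be "used to handle the sign of the potential term" in the monotonicity argument of Lemma~\ref{lemma:wcp+} does not survive inspection: when $\mathcal V$ changes sign, the lower-order term in that proof has no definite sign over $\{v_2<v_1\}$, and the principal term there is a difference of fluxes $(\xi_1-\xi_2)\cdot\nabla(v_2-v_1)^-$, not a Rayleigh quotient, so $\lambda_1>0$ cannot be inserted. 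A weak comparison principle with an indefinite Morrey potential and $\lambda_1>0$ is precisely Theorem~\ref{WCP}, whose proof in the paper rests on this very Proposition~\ref{Diaz}; invoking it here would be circular. The same obstruction recurs at the existence step: you cite $\alpha_3\Rightarrow\alpha_5$ of Theorem~\ref{theorem:GL}, but that item gives a solution in $\W(\om)$ (zero boundary data) with $g\in L^{p'}(\om)$, while the iterate $u_{j+1}$ must have boundary trace $f$ and Morrey-class right-hand side; the relevant variational tool, Proposition~\ref{proposition:coercivity}-$(a)$ on the affine set $\{u=f\mbox{ on }\partial\om\}$, again demands $\mathcal V\geq0$.

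The paper avoids both difficulties by decomposing $V=|V|-2V^-$, so that $Q'_{A,p,V}[u]=g$ is rewritten as $Q'_{A,p,|V|}[u]=g+2V^-|u|^{p-2}u$. Here the operator potential $|V|$ is \emph{pointwise} nonnegative no matter how wild $V$ is, so Lemma~\ref{lemma:wcp+} and Proposition~\ref{proposition:coercivity}-$(a)$ apply directly, and the frozen right-hand side $G(x,v)=g+2V^-v^{p-1}$ is a nonnegative element of $M^q(p;\om)$, increasing in $v$, which is exactly what the monotone iteration needs. If you replace your $k$-shift by this $|V|$/$V^-$ split, the rest of your plan goes through essentially as you wrote it.
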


\begin{proof}
Consider the set
 \be\nonumber
  \mathcal{K}:=\big\{v\in W^{1,p}(\om)\cap C(\bar{\om})\;\mid \; 0\leq \psi\leq v\leq\varphi\mbox{ in }\om\big\}.
 \ee
For any $x\in\om$ and $v\in\mathcal{K}$ we define
 \be\nonumber
  G(x,v):=g(x)+2V^-(x)\big(v(x)\big)^{p-1}.
 \ee
Note that $G\in M^q(p;\om)$ and $G\geq0$ a.e. in $\om$. The map $T:\mathcal{K}\rightarrow W^{1,p}(\om)$ defined by $T(v)=u$, where $u$ is the solution of
 \be\label{Q'=g}\Bigg\{
  \begin{array}{ll}
   Q'_{A,p,|V|}[u]=G(x,v) &\mbox{in }\om,
    \\[8pt]
   u=f &\mbox{in the trace sense on }\partial\om,
  \end{array}
 \ee
is well defined by Propositions \ref{proposition:wlsc} and \ref{proposition:coercivity}. Indeed, consider the functionals
$$J,\; \bar{J}:W^{1,p}(\om)\to\R\cup\{\infty\}$$
defined respectively in (\ref{funct:barJ}) and (\ref{funct:J}), with $\mathcal{V}=|V|$ and $\G=G(x,v)$. Let
\[
 \{u_k\}_{k\in\N}\subset\mathcal{A}:=\{u\in W^{1,p}(\om)\mid u=f\mbox{ on }\partial\om\},
\]
be such that
\[
J[u_k]\downarrow m:=\inf_{u\in\mathcal{A}}J[u].
\]
Since $f\geq0$, we have that $\{|u_k|\}_{k\in\N}\subset\mathcal{A}$ as well, which implies $m\leq J[|u_k|]=\bar{J}[u_k]\leq J[u_k]$, the latter inequality holds since $\G\geq0$ a.e. in $\om$. In particular, it follows that $\inf_{u\in\mathcal{A}}\bar{J}[u]=m$. Letting $k\to\infty$ we deduce
\[
 \bar{J}[u_k]\rightarrow m.
\]
But, by Proposition \ref{proposition:wlsc}-($b$), $\bar{J}$ is weakly lower semicontinuous, and by Proposition \ref{proposition:coercivity}-($a$) it is also coercive. Since $\mathcal{A}$ is weakly closed, it follows (see for example, \cite[Theorem 1.2]{Str}) that  $m$ is achieved by a nonnegative function $u\in\mathcal{A}$ that satisfies $\bar{J}(u)=m$. Moreover, $J(u)=\bar{J}(u)=m$. So, $u$ is a minimizer of $J$ on $\mathcal{A}$, and hence a solution of \eqref{Q'=g}. 

\medskip

Observe that the map $T$ is monotone. Indeed, let $v_1,~v_2\in\mathcal{K}$ be such that $v_1\leq v_2$. Then since $G(x,v)$ is increasing in $v$ we have
 \[
  Q'_{A,p,|V|}[T(v_1);\om]=g(x,v_1)\leq g(x,v_2)=Q'_{A,p,|V|}[T(v_2);\om],
 \]
and since $T(v_1)=f=T(v_2)$ on $\partial\om$, we get from  Lemma \ref{lemma:wcp+} with $\mathcal{V}=|V|$ and $\G=g(x,v_1)$ that $T(v_1)\leq T(v_2)$ in $\om$.

\medskip

Let $v\in W^{1,p}(\om)\cap C(\bar{\om})$ be a subsolution of (\ref{Q'=f}). Then $Q'_{A,p,|V|}[v]=Q'_{A,p,V}[v]+G(x,v)-g(x)\leq G(x,v)$ in $\om$, in the weak sense, and thus $v$ is a subsolution of (\ref{Q'=g}). On the other hand, $T(v)$ is a solution of (\ref{Q'=g}). Lemma \ref{lemma:wcp+} with $\mathcal{V}=|V|$ and $\G=G(x,v)$ gives $v\leq T(v)$ a.e. in $\om$. This implies in turn that
 \begin{equation*}
  Q'_{A,p,V}[T(v)]
    =
    g+2V^-\big(|v|^{p-2}v-|T(v)|^{p-2}T(v)\big)
      \leq
      g \qquad \mbox{in }\om,
 \end{equation*}
in the weak sense.

Summarizing, if $v$ is a subsolution of (\ref{Q'=f}) then $T(v)$ is a subsolution of (\ref{Q'=f}) such that $v\leq T(v)$ a.e. in $\om$. In the same fashion, we can show that if $v\in W^{1,p}(\om)\cap C(\bar{\om})$ is a supersolution of (\ref{Q'=f}) then $T(v)$ is a supersolution of (\ref{Q'=f}) such that $v\geq T(v)$ a.e. in $\om$.

\medskip

Defining the sequences
\[
 \underline{u}_0:=\psi,\;\; \underline{u}_n:=T(\underline{u}_{n-1})=T^{(n)}(\psi), \;\; \mbox{and }\;\;
\overline{u}_0:=\varphi,\;\; \overline{u}_n:=T(\overline{u}_{n-1})=T^{(n)}(\varphi) \quad n\in\N,
\]
we get from the above considerations that  $\{\underline{u}_n\}$ and $\{\overline{u}_n\}$ increases and decreases, respectively, to functions $\underline{u}$ and $\overline{u}$ for every $x\in\om$. Moreover, the convergence is clearly also in $L^p(\om)$ (by Theorem~1.9 in \cite{LL}). Then, using an argument similar to the proof of Proposition~\ref{Harnack:cp}, it follows that $\underline{u}$ and $\overline{u}$ are fixed points of $T$, and both solve (\ref{Q'=f}) and satisfy $\psi\leq \underline{u}\leq \overline{u}\leq\phi$ in $\om$.

The uniqueness claim follows from part (iii) of Lemma~\ref{lemma:lind}.
\end{proof}

Finally, we extend the WCP (cf. \cite{GMdL,PR,PS})

\begin{theorem}[Weak comparison principle]\label{WCP}
Let $\om\subset\Rn$ be a bounded Lipschitz domain. Suppose that $A$ is a uniformly elliptic and bounded matrix in $\om$, and $g,~V\in M^q(p;\om)$ with $g\geq0$ a.e. in $\om$. Assume that $\lambda_1>0$, where $\lambda_1$ is the principal eigenvalue of the operator $Q'_{A,p,V}$ defined by (\ref{first:eigenvalue}). Let $u_2\in W^{1,p}(\om)\cap C(\bar{\om})$ be a solution of
\[
 \left\{
 \begin{array}{ll}
   Q'_{A,p,V}[u_2]=g & \mbox{ in }\om,
    \\[8pt]
   u_2>0 & \mbox{ on }\partial\om.
 \end{array}
 \right.
\]
If $u_1\in W^{1,p}(\om)\cap C(\bar{\om})$ satisfies
\[
 \left\{
 \begin{array}{ll}
   Q'_{A,p,V}[u_1]\leq Q'_{A,p,V}[u_2] & \mbox{ in }\om,
    \\[8pt]
   u_1\leq u_2 & \mbox{ on }\partial\om,
 \end{array}
 \right.
\]
then, $u_{1}\leq u_{2}$ in $\om$.
\end{theorem}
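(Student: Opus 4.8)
The plan is to reduce the general statement to the simple WCP of Lemma~\ref{lemma:wcp+}, which already handles the case of a nonnegative potential. The difficulty is that here $V$ need not be nonnegative, so we cannot directly test with $(u_1-u_2)^+$ and invoke the monotonicity inequality \eqref{cauchy-schwartz}: the potential term $\int V(|u_1|^{p-2}u_1 - |u_2|^{p-2}u_2)(u_1-u_2)^+$ does not have a sign. The standard device, following \cite{GMdL,PR}, is to use the positivity of $\lambda_1$ together with the sub/supersolution machinery of Proposition~\ref{Diaz} and the strict positivity of $u_2$ to perform a change of unknown that absorbs the bad part of $V$.

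First I would record that since $\lambda_1>0$, assertion $\alpha_3$ of Theorem~\ref{theorem:GL} holds, hence by $\alpha_3\Rightarrow\alpha_1$ the weak maximum principle is valid in $\om$, and by $\alpha_3\Rightarrow\alpha_5$ the Dirichlet problem $Q'_{A,p,V}[v]=g$ with $v=u_2$ on $\partial\om$ has a unique nonnegative solution in the class $W^{1,p}(\om)\cap C(\bar\om)$; by the strong maximum principle (which follows since $g\ge0$ and $u_2>0$ on $\partial\om$) this solution is strictly positive in $\om$, and by uniqueness it coincides with $u_2$. In particular $u_2>0$ in $\bar\om$, so $u_2$ is bounded away from $0$ on compact subsets and, being continuous on $\bar\om$, in fact $\inf_{\bar\om}u_2>0$. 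Next, since $u_1$ is continuous on $\bar\om$ and $u_1\le u_2$ on $\partial\om$, for every $\e>0$ the function $u_1-\e$ satisfies $u_1-\e<u_2$ on $\partial\om$; it therefore suffices to prove $u_1-\e\le u_2$ in $\om$ for all $\e>0$ and let $\e\to0$. (Replacing $u_1$ by $u_1-\e$ makes the set $\{u_1-\e>u_2\}$ compactly contained in $\om$, which is the technical point that lets one localize.)

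The heart of the argument is then to show $w:=(u_1-u_2)^+\equiv0$, where now $w\in W^{1,p}_0(\om)\cap C_c(\om)$ because of the $\e$-shift. Test the inequality $Q'_{A,p,V}[u_1]\le Q'_{A,p,V}[u_2]$ with $w$ (this is legitimate since $w$ has compact support in $\om$ and by the density remark after Definition~\ref{def.weak:sol:Q'=0} test functions may be taken in $\wtest(\om)$), obtaining
\be\nonumber
 \int_\om\big(|\nabla u_1|_A^{p-2}A\nabla u_1-|\nabla u_2|_A^{p-2}A\nabla u_2\big)\cdot\nabla w\dx
  +\int_\om V\big(|u_1|^{p-2}u_1-|u_2|^{p-2}u_2\big)w\dx\le0.
\ee
On the set $\{u_1>u_2>0\}$ both $u_1,u_2$ are positive, the first integrand is $\ge0$ by \eqref{cauchy-schwartz}, and it remains to control the potential term. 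Here I would use the strict positivity of $\lambda_1$ in the form \eqref{subcriticality} (available with $\e=\lambda_1$ on $\om$, and more usefully on the subdomain $\om':=\{u_1>u_2\}\Subset\om$ via monotonicity of $\lambda_1$ in the domain), combined with the elementary convexity estimate that for $0<b<a$ one has $a^{p-1}-b^{p-1}\le C_p\,(a-b)(a^{p-2}+b^{p-2})$ (for $p\ge2$) or $\le C_p(a-b)^{p-1}$ appropriately (for $p<2$), to bound $\int_{\om'}|V|\,(u_1^{p-1}-u_2^{p-1})w$ by a small multiple of $\int_{\om'}|\nabla w|_A^p$ plus a small multiple of $\int_{\om'}w^p$ through the Morrey--Adams theorem (Theorem~\ref{theorem:uncertainty}), the latter then being dominated using $Q_{A,p,V}[w]\ge\lambda_1\|w\|_{L^p}^p$ applied to $w\in\W(\om)$. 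Choosing the Morrey--Adams parameter $\delta$ small enough relative to $\theta_{\om'}^p$ and $\lambda_1$ forces $\int_{\om'}|\nabla w|_A^p=0$, hence $\nabla w\equiv0$, hence $w\equiv0$ by the zero boundary values; this gives $u_1-\e\le u_2$, and $\e\to0$ finishes the proof.

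The main obstacle is precisely the bookkeeping in this last step: making sure the ``bad'' potential contribution can be absorbed on the left requires using $\lambda_1>0$ in the right quantitative way (a spectral-gap type inequality on $\om$ or on $\om'$) rather than merely as a qualitative hypothesis, and handling the two regimes $p\ge2$ and $1<p<2$ uniformly in the convexity estimate for $a^{p-1}-b^{p-1}$. An alternative, cleaner route — the one I expect is actually intended here — is to avoid this estimate entirely by invoking Proposition~\ref{Diaz}: since $u_2$ is a supersolution, $u_1$ a subsolution, and one can take $\psi$ to be a suitable small positive constant (or a rescaled principal eigenfunction) as a lower barrier and $\varphi$ a large constant as an upper barrier, Proposition~\ref{Diaz} produces a solution sandwiched as $\psi\le u\le\varphi$; then the uniqueness in Proposition~\ref{Diaz} (via part (iii) of Lemma~\ref{lemma:lind}) forces $u=u_2$, and comparing $u_1$ against this $u$ through the monotone operator $T$ of Proposition~\ref{Diaz} yields $u_1\le u$ directly from the monotone iteration. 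I would present the argument in this second form, as it reuses the already-established tools with minimal extra computation.
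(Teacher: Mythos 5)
Your second route is indeed the one the paper takes: reduce to the sub/supersolution machinery of Proposition~\ref{Diaz}, whose existence step uses Lemma~\ref{lemma:wcp+} (the nonnegative-potential WCP) via the auxiliary operator $Q'_{A,p,|V|}$, and then invoke uniqueness. However, the barriers you propose do not work, and this is a genuine gap. A large constant $\varphi\equiv C$ is \emph{not} a supersolution of $Q'_{A,p,V}[v]=g$ once $V$ takes negative values, since $Q'_{A,p,V}[C]=VC^{p-1}<0\le g$ on $\{V<0\}$. Likewise, a small positive constant is not a subsolution, and a rescaled principal eigenfunction $\e v_1$ satisfies $Q'_{A,p,V}[\e v_1]=\e^{p-1}\lambda_1 v_1^{p-1}$, which need not be $\le g$ pointwise (e.g.\ where $g=0$). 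So Proposition~\ref{Diaz} cannot be applied with these choices, and the iteration never gets off the ground.

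The missing idea is to exploit the $(p-1)$-homogeneity of $Q'_{A,p,V}$ together with the strong maximum principle. Since $u_2$ is a supersolution of $Q'_{A,p,V}[v]=0$ that is positive on $\partial\om$ and $\lambda_1>0$, the strong maximum principle gives $u_2>0$ on $\bar\om$; set $c:=\max\{1,\max_{\bar\om}u_1/\min_{\bar\om}u_2\}<\infty$, so that $u_1\le cu_2$ on $\bar\om$. Then, crucially, $Q'_{A,p,V}[cu_2]=c^{p-1}Q'_{A,p,V}[u_2]=c^{p-1}g\ge g$ because $c\ge1$ and $g\ge0$; so $\varphi:=cu_2$ \emph{is} a supersolution of the Dirichlet problem with boundary data $u_2$, while $\psi:=u_1$ is the subsolution (both equal to or below/above $u_2$ on $\partial\om$). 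Proposition~\ref{Diaz} then yields a solution $v$ with $u_1\le v\le cu_2$ and $v=u_2$ on $\partial\om$; since $u_2>0$ on $\partial\om$, the uniqueness clause of Proposition~\ref{Diaz} (Lemma~\ref{lemma:lind}(iii)) forces $v=u_2$, and $u_1\le u_2$ follows. Your first route (testing with $(u_1-u_2)^+$ and absorbing $V$ via Morrey--Adams and a spectral gap) faces exactly the obstacle you describe and is not what is used; the homogeneity trick above sidesteps it entirely.
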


\begin{proof}
Since $u_2$ is a supersolution of (\ref{Q'=0}) in $\om$ that is positive on $\partial\om$, the strong maximum principle implies $u_2>0$ in $\bar{\om}$. Let $c:=\max\{1,\max_{\bar{\om}}u_1/\min_{\bar{\om}}u_2\}$, then $u_1\leq cu_2$ in $\bar{\om}$. Consider now the problem
\be\label{Dirichlet:Diaz}
 \Bigg\{
 \begin{array}{ll}
   Q'_{A,p,V}[v]=g & \mbox{in }\om,
    \\[8pt]
   v=u_2 & \mbox{on }\partial\om.
 \end{array}
\ee
By the choice of $c$ and our assumption we have that $cu_2$ is a supersolution of (\ref{Dirichlet:Diaz}) such that $u_1\leq u_2\leq cu_2$ on $\partial\om$, while $u_1$ is a subsolution of (\ref{Dirichlet:Diaz}). Applying Proposition \ref{Diaz} with $\psi=u_1$ and $\phi= cu_2$, we get a unique solution $v$ of (\ref{Dirichlet:Diaz}) such that $u_1\leq v\leq cu_2$ in $\om$ and $v=u_2$ on $\partial\om$, in the trace sense. Clearly, $v$ is a supersolution of (\ref{Q'=0}) in $\om$ that is positive on $\partial\om$. Again, by the strong maximum principle, we get $v>0$ in $\bar{\om}$. By the uniqueness of the boundary problem \eqref{Dirichlet:Diaz} (Proposition \ref{Diaz}), we have  $v=u_2$. Hence, $u_1\leq u_2$ in $\om$. 
\end{proof}
\subsection{Behaviour of positive solutions near an isolated singularity}\label{ssubsec:isolated}
Using the weak comparison principle of the previous subsection (Theorem~\ref{WCP}) we study the behaviour of positive solutions near an isolated singular point. We have

\begin{theorem}\label{theorem:removesing}
Let $p\leq n$ and $x_0\in\Om$. Suppose $A$ and $V$  satisfy hypothesis {\emph(H0)} in $\Om$, and let $u$ be a nonnegative solution of the equation $Q'_{A,p,V}[v]=0$ in $\Om\setminus\{x_0\}$.

1. If  $u$ is bounded near $x_0$, then $u$ can be extended to a positive solution in $\Om$.

2. If $u$ is unbounded near $x_0$, then $\displaystyle{\lim_{x\to x_0}}u(x)=\infty$.
\end{theorem}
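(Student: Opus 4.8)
The plan is to treat the two cases of the isolated singularity separately, modelling the argument on the classical Serrin--type results for the $p$-Laplacian but replacing all uses of the strong maximum principle and the boundary point lemma by the weak comparison principle (Theorem~\ref{WCP}) proved in the previous subsection, the weak Harnack inequality (Remark~\ref{remark:harnack_supersol}), and the local regularity theory of \S\ref{ssubsec:(p,A)-Laplacian}. First I would fix a small ball $B:=B_R(x_0)\Subset\Om$ on which, by the Morrey--Adams theorem, the principal eigenvalue of $Q'_{A,p,V}$ is positive (shrink $R$ if necessary), so that Theorem~\ref{WCP} and Proposition~\ref{Diaz} are applicable on $B$ and on annuli $B\setminus\overline{B_\rho(x_0)}$.

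\textbf{Case 1 (bounded near $x_0$).} Assume $0\le u\le M$ on a punctured ball $B\setminus\{x_0\}$. The goal is to show the singularity is removable. I would let $v_j$ be the solution of the Dirichlet problem $Q'_{A,p,V}[v_j]=0$ in $B\setminus\overline{B_{1/j}(x_0)}$ with $v_j=u$ on $\partial B$ and $v_j=M$ on $\partial B_{1/j}(x_0)$, obtained via Proposition~\ref{Diaz} (using $u$ as a subsolution and the constant $M$, suitably perturbed, as a supersolution — here one needs $V$ to control the zeroth-order term, which is where positivity of $\lambda_1$ on $B$ enters). By the weak comparison principle $u\le v_j$ in the annulus, and $\{v_j\}$ is monotone, uniformly bounded, hence by Harnack convergence (Proposition~\ref{Harnack:cp}) converges locally uniformly in $B\setminus\{x_0\}$ to a solution $v$ with $0\le u\le v\le M$. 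A capacity/test-function argument then shows that any bounded solution in the punctured ball which is dominated by a bounded solution is itself the restriction of a solution on all of $B$: one tests the equation with $u\,\eta_\varepsilon$, where $\eta_\varepsilon$ is a logarithmic (for $p=n$) or power-type (for $p<n$) cutoff vanishing near $x_0$, uses the Caccioppoli-type estimate exactly as in the proof of Proposition~\ref{Harnack:cp}, and lets $\varepsilon\to0$; since the $p$-capacity of a point is zero for $p\le n$, the cutoff contributions vanish and $u$ extends across $x_0$ to a $\Wloc(B)$ solution. Finally, if $u\not\equiv0$ then by the Harnack inequality (Theorem~\ref{theorem:harnack}) the extension is strictly positive in $\Om$; if $u\equiv0$ near $\partial B$ one again gets $u\equiv0$, which I would simply exclude or note as the trivial subcase. (Strictly speaking, the statement asserts $u$ extends to a \emph{positive} solution, so the hypothesis tacitly excludes $u\equiv 0$, or one states the extension and then applies Harnack.)

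\textbf{Case 2 (unbounded near $x_0$).} Suppose $u$ is not bounded near $x_0$. I would argue by contradiction: if $\liminf_{x\to x_0}u(x)=:m<\infty$, pick a sequence $x_k\to x_0$ with $u(x_k)\to m$. For each $\rho>0$ small, $u$ is bounded on $\partial B_\rho(x_0)$ by continuity away from $x_0$ (Remark~\ref{remark:regularity}), say by $M_\rho$; the idea is to compare $u$ on the annulus $B_R\setminus\overline{B_\rho(x_0)}$ with a solution having boundary data $u$ on $\partial B_R$ and a large constant $N$ on $\partial B_\rho(x_0)$, and then, using $\liminf = m$, with a solution bounded by roughly $m+\varepsilon$ on the inner sphere for a suitable subsequence of radii. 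The weak comparison principle then forces $u$ to be bounded above near $x_0$ by $m+\varepsilon$ on these annuli, contradicting unboundedness. More precisely I would use the following barrier: since $\lambda_1(Q'_{A,p,V};B_R)>0$, for each $\rho$ there is a unique solution $h_\rho$ of $Q'_{A,p,V}[h_\rho]=0$ in $B_R\setminus\overline{B_\rho}$ with $h_\rho=\min_{\partial B_R}u$ on $\partial B_R$ and $h_\rho=0$ on $\partial B_\rho$; by WCP, $h_\rho\le u$ and, as the $p$-capacity of a point vanishes, $h_\rho\to$ a positive solution on $B_R\setminus\{x_0\}$, showing $u$ is bounded below by a positive constant near $x_0$. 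Then if $\limsup$ and $\liminf$ of $u$ at $x_0$ differed, or if $\liminf$ were finite while $u$ is unbounded, one plays the two barriers against each other on a sequence of annuli to derive $u\le m+\varepsilon$ near $x_0$, contradicting $\limsup=\infty$. Hence $\lim_{x\to x_0}u(x)=\infty$.

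\textbf{Main obstacle.} The delicate point is the capacity argument: justifying that the logarithmic/power cutoffs at the point $x_0$ contribute nothing in the limit requires a Caccioppoli estimate uniform in $\varepsilon$, and this must absorb the potential term via the Morrey--Adams theorem (Theorem~\ref{theorem:uncertainty}) rather than via boundedness of $V$ — one has to check that $\int_{B_\varepsilon}|V||u|^p\to0$, which follows since $|V|\in M^q_{\rm loc}(p;\Om)\subset L^1_{\rm loc}$ and $u$ is locally bounded on the relevant set. The second tricky point is constructing admissible super/subsolutions for Proposition~\ref{Diaz} on annuli when $V$ changes sign: one replaces $V$ by $V+c$ with $c$ large on the annulus and notes that constants are then supersolutions, or works directly with the positivity of $\lambda_1$ on the small ball; both routes are routine once $R$ is chosen small. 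Everything else — monotonicity of the approximating sequences, passage to the limit, strict positivity via Harnack — is standard given the tools already assembled in the paper, so I would present those steps briskly and concentrate the write-up on the two points above.
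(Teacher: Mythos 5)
For part~1 the paper simply cites the removable singularity result of Mal\'y--Ziemer (\cite[Theorem~3.16]{MZ}, extending Serrin \cite[Theorem~10]{Sr} to Morrey potentials), whereas you sketch an ab initio capacity argument with logarithmic/power cutoffs; that route is workable in principle, and you are honest that it is the delicate point, but it is a considerable amount of technical work that the paper avoids by citation. The more substantial issue is with part~2. Your contradiction argument needs, at a crucial moment, to upgrade the pointwise information $u(x_k)\to m$ to a bound on the \emph{entire} sphere $S_{|x_k|}$, in order to feed boundary data into the weak comparison principle on the annulus $B_R\setminus \overline{B_{|x_k|}}$. The only tool that converts a pointwise bound into a sphere bound is Harnack's inequality on an annulus comparable to $|x_k|$ surrounding $S_{|x_k|}$. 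You gesture at using Harnack but do not flag that the Harnack constant there must be \emph{uniform as $|x_k|\to 0$}. This uniformity is not automatic: the constant in Theorem~\ref{theorem:harnack} depends on $\|V\|_{M^q(\om)}$, and on shrinking annuli this quantity can behave badly.

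The paper handles this by rescaling each annulus $\mathbb{A}_k:=B_{3r_k/2}\setminus\overline{B}_{r_k/2}$ to a fixed annulus $\mathbb{A}'$, noting that $u_R(x):=u(Rx)$ solves $Q'_{A_R,p,V_R}[u_R]=0$ with $V_R(x)=R^pV(Rx)$, and verifying that $\|V_R\|_{M^q(\mathbb{A}')}=R^{\,p-n/q}\,\|V\|_{M^q(\mathbb{A}_R)}$; for $p<n$ and $q>n/p$ the exponent $p-n/q$ is strictly positive (a similar computation works for $p=n$), so the Morrey norm of the rescaled potential stays bounded as $R\to0$ and the Harnack constant on $\mathbb{A}'$ is uniform. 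This yields $\min_{S_{r_k}}u\to\infty$ along the subsequence, after which the paper introduces a fixed positive solution $v$ in a small ball and shows, via the weak comparison principle following \cite[Lemma~4.2]{FP1}, that $m_r:=\min_{S_r}(u/v)$ is monotone as $r\to0$; combined with $m_{r_k}\to\infty$ this forces $\lim_{x\to x_0}u(x)=\infty$. Your ``play the barriers against each other'' step is vaguer and, more importantly, tacitly assumes the uniform Harnack needed to get sphere bounds from the sequence $x_k$; without the scaling computation the argument does not close. I would recommend making the rescaled Harnack step, and the monotonicity of $m_r$, explicit.
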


\begin{proof}1. This is a special case of \cite[Theorem 3.16]{MZ}, which is in turn an extension to $V\in M_{\rm loc}^q(p;\Om)$ of \cite[Theorem 10]{Sr}, where $V$ is assumed to be in $L^q_{\rm loc}(\Om)$ for some $q>n/p$. In particular, this part of the theorem holds true for solutions of arbitrary sign in $\Om\setminus o$, where $o$ is a set having zero $p$-\textit{capacity}.

2. We follow the argument in  \cite{FP1} (for a bit different argument see \cite[p. 278]{Sr}). Without loss of generality, we assume that $x_0=0$ and $B_1(0)\Subset \Om$. For $r>0$, we denote the ball $B_{r}:=B_{r}(0)$, and the corresponding sphere $S_{r}:=\partial B_{r}$.

Since $\limsup_{x\to 0}u(x)=\infty,$ there exists a sequence $\{x_k\}_{k\in\N}\subset\Om$ converging to $0,$ such that $u(x_k)\to\infty$ as $k\to\infty$. Let $r_k=|x_k|$, where $k=1,2,\ldots$, and consider the annular domains $\mathbb{A}_{k}:=B_{3r_k/2}\setminus\bar{B}_{r_k/2}$. For each $k$ we scale $\mathbb{A}_{k}$ to the fixed annulus $\mathbb{A}':=B_{3/2}(0)\setminus\bar{B}_{1/2}(0)$. Note next that if $u$ is a solution of the equation  $Q'_{A,p,V}[v]=0$ in $\Om\setminus\{0\}$, then for any positive $R$, the function $u_R(x):=u(Rx)$  satisfies the equation
\begin{equation}\label{QARVR}
     Q'_{A_R,p,V_R}[u_R]:=-\Div_{A_R}\big\{|\nabla u_R|^{p-2}_{A_R}A_R(x)\nabla u_R\big\} + V_R(x)|u_R|^{p-2}u_R=0\quad\mbox{in }\Om_R,
\end{equation}
where $A_R(x):=A(Rx)$, $V_R(x):=R^pV(Rx)$, and $\Om_R:=\{x/R~|~x\in\Om\setminus \{0\}\}$. Applying thus the Harnack inequality in $\mathbb{A}'$, we have for $k$ sufficiently large
\be\label{harnack:annuli}
 \sup_{x\in\mathbb{A}_{k}}u(x)=\sup_{x\in\mathbb{A}'}u_{r_k}(x)\leq C \inf_{x\in\mathbb{A}'}u_{r_k}(x)=C\inf_{x\in\mathbb{A}_{k}}u(x),
\ee
where the positive constant $C$ is independent of $r_k$. To see this for example in the case $p<n,$ observe that $\|V_R\|_{M^q(\mathbb{A}')}=R^{p-n/q}\|V\|_{M^q(\mathbb{A}_{R})}$ and by our assumptions on $q$ we have that the exponent on $R$ is nonnegative (it is in fact positive). Now from (\ref{harnack:annuli}) we may readily deduce
\be\label{eq_subseq}
     \min_{S_{r_k}}u(x)\rightarrow\infty\qquad\mbox{as }k\to\infty.
\ee
Let $v$ be a fixed positive solution of the equation $Q'_{A,p,V}[w]=0$ in $B_1$, and set for $0<r<1$
\[
 m_r:=\min_{S_r} \frac{u(x)}{v(x)}\,.
\]
Then, as in \cite[Lemma~4.2]{FP1}, the WCP implies that the function $m_r$ is monotone as $r\to 0$. This together with \eqref{eq_subseq} imply that $m_r$ is monotone nondeceasing near $0$.  Therefore, $\lim_{r\to 0} m_r = \infty$, and thus, $\lim_{x\to 0} u(x) = \infty$.
\end{proof}

\begin{remark}
The asymptotic behavior of positive solutions of the equation $Q'_{A,p,V}[v]=0$ near an isolated singular point remains open for further studies (see \cite{FP1,FP2,PT2} and the references therein for partial results).
\end{remark}
\subsection{Positive solutions of minimal growth and Green's function}\label{ssubsec:minimal}
The following notion was introduced by Agmon \cite{Agm} in the linear case and was extended to $p$-Laplacian type equations of the form (\ref{intro:pLaplace}) in \cite{PT1} and \cite{PR}.
\begin{definition}
Let $K_0$ be a compact subset of $\Om$. A positive solution $u$ of (\ref{Q'=0}) in $\Om\setminus K_0$ is said to be a \emph{positive solution of minimal growth in a neighborhood of infinity in $\Om$,} and denoted by $u\in\mathcal{M}_{\Om;K_0}$, if for any smooth compact subset of $\Om$ with $K_0\Subset{\rm int}K$, and any positive supersolution $v\in C((\Om\setminus{\rm int}K)$ of (\ref{Q'=0}) in $\Om\setminus K$, we have
 \be\nonumber
  u\leq v\mbox{ on }\partial K\;\;\;\Rightarrow\;\;\;u\leq v\mbox{ in }\Om\setminus K.
 \ee
If $u\in\mathcal{M}_{\Om;\emptyset}$, then $u$ is called a \emph{global minimal solution of (\ref{Q'=0}) in $\Om$.}
\end{definition}
We first prove that if $Q_{A,p,V}$ is nonnegative in $\Om$, then for any $x_0\in\Om$, $\mathcal{M}_{\Om;\{x_0\}}\neq \emptyset$. This result extends the corresponding results in  \cite{PT1,PT2}, and \cite{PR}.

\begin{theorem}
Suppose that $Q_{A,p,V}$ is nonnegative in $\Om$ where $A$ and $V$  satisfy hypothesis {\em(H0)}. Then for any $x_0\in\Om$, the equation $Q'_{A,p,V}[v]=0$ admits a solution $u\in\mathcal{M}_{\Om;\{x_0\}}$.
\end{theorem}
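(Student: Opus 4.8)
The plan is to construct $u\in\mathcal M_{\Om;\{x_0\}}$ as a limit of positive solutions of Dirichlet problems with a singular datum placed at $x_0$, a standard exhaustion argument now available to us because all the necessary tools have been assembled. First I would fix an exhaustion $\{\om_i\}_{i\in\N}$ of $\Om$ by Lipschitz domains with $x_0\in\om_1$, $\om_i\Subset\om_{i+1}\Subset\Om$, $\cup_i\om_i=\Om$, and choose nonnegative $f_i\in\test(\om_i\setminus\overline{\om}_{i-1})\setminus\{0\}$ supported in a thin shell near $\partial\om_i$. For each $i$, since $Q_{A,p,V}$ is nonnegative in $\Om$ we have $\lambda_1(Q_{A,p,V+1/i};\om_i)\ge 1/i>0$, so Theorem~\ref{theorem:GL} (assertion $\alpha_5$) furnishes a unique nonnegative solution $u_i\in\W(\om_i)$ of $Q'_{A,p,V+1/i}[u_i]=f_i+\mu_i\delta$-type data; more precisely, rather than a Dirac mass (which is not in $M^q(p;\om_i)$), I would work with solutions of $Q'_{A,p,V+1/i}[u_i]=g_i$ where $g_i\in\test(B_{1/i}(x_0))$ is a nonnegative bump with $\int g_i=1$, together with the shell datum $f_i$; the role of $f_i$ is only to guarantee via the strong maximum principle (Theorem~\ref{theorem:GL}, $\alpha_2$) that $u_i>0$ in $\om_i$. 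Normalize so that $u_i$ agrees with a fixed value on, say, $\partial B_{1}(x_0)$, or normalize $u_i(y_0)=1$ at a reference point $y_0\neq x_0$.

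The key convergence step is to apply the Harnack convergence principle (Proposition~\ref{Harnack:cp}) \emph{away from} $x_0$. Off any fixed ball around $x_0$ the $u_i$ are positive solutions of $Q'_{A,p,V+1/i}[u_i]=0$ with $V+1/i\to V$ in $M^q_{\rm loc}(p;\Om\setminus\{x_0\})$, so along a subsequence $u_i\to u$ in $C^{0,\beta}_{\rm loc}(\Om\setminus\{x_0\})$ and weakly in $W^{1,p}_{\rm loc}(\Om\setminus\{x_0\})$, with $u$ a positive solution of $Q'_{A,p,V}[u]=0$ in $\Om\setminus\{x_0\}$. To see that $u$ actually has a genuine singularity (is not extendable across $x_0$) — or, if it is extendable, that the extension is still the object we want — I would note that either way $u$ is a positive solution of $Q'_{A,p,V}[u]=0$ in $\Om\setminus\{x_0\}$, and invoke Theorem~\ref{theorem:removesing}: if $u$ is bounded near $x_0$ it extends to a positive solution in $\Om$, and in that case $u$ is in fact a global minimal solution (the case $K_0=\emptyset\subset\{x_0\}$), hence certainly lies in $\mathcal M_{\Om;\{x_0\}}$; if $u$ is unbounded near $x_0$ then $\lim_{x\to x_0}u(x)=\infty$, which is the interesting case.

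The heart of the argument — and the main obstacle — is verifying the \emph{minimal growth} property: given a smooth compact $K$ with $x_0\in{\rm int}K$ and a positive supersolution $v\in C(\Om\setminus{\rm int}K)$ of (\ref{Q'=0}) in $\Om\setminus K$ with $u\le v$ on $\partial K$, I must show $u\le v$ in $\Om\setminus K$. The strategy is to compare $u_i$ with $v$ on the annular regions $\om_i\setminus{\rm int}K$. On $\partial\om_i$ we have $u_i=0\le v$ for $i$ large (once ${\rm supp}f_i$ has escaped $K$, so that $u_i$ solves the homogeneous equation near $\partial\om_i$ and vanishes there); on $\partial K$ we need $u_i\le v$, which will \emph{not} hold exactly but can be arranged up to a multiplicative factor $(1+\e_i)$ with $\e_i\to 0$ using the $C^{0,\beta}_{\rm loc}$ convergence $u_i\to u$ near $\partial K$ together with $u\le v$ on $\partial K$ and a compactness/continuity argument (here one may need the strict inequality $u<v$ on $\partial K$, or perturb $v$ to $(1+\e)v$ which is still a supersolution by homogeneity). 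Then the weak comparison principle (Theorem~\ref{WCP}), applied on each Lipschitz subdomain exhausting $\om_i\setminus K$ where $\lambda_1>0$ holds because $Q_{A,p,V+1/i}$ is nonnegative there and the domain is a proper subdomain (so strict positivity of $\lambda_1$ follows from Theorem~\ref{theorem:GL} via Proposition~\ref{proposition:superdomain}-type reasoning, or directly from $\lambda_1(Q_{A,p,V+1/i};\cdot)\ge 1/i$), yields $u_i\le (1+\e_i)v$ in $\om_i\setminus K$. Letting $i\to\infty$ gives $u\le v$ in $\Om\setminus K$, as required. The technical care needed is exactly in matching boundary values on $\partial K$ and in checking that $\om_i\setminus K$ can be taken Lipschitz and that WCP applies there; these are the same maneuvers as in \cite{PT1,PR}, so I would carry them out briefly and refer to those papers for the routine details.
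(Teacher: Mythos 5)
Your overall scheme---exhaust $\Om$ by Lipschitz $\om_i$, solve Dirichlet problems for $Q'_{A,p,V+1/i}$ with a nonnegative datum shrinking to $x_0$, normalize at a reference point, pass to a subsequence via the Harnack convergence principle on $\Om\setminus\{x_0\}$, and verify minimal growth via the WCP---is the paper's, apart from one detail: the paper places the datum $f_i$ in a thin annular shell $B_{r_1/i}(x_0)\setminus\overline B_{r_1/(i+1)}(x_0)$ and works on the punctured domains $U_i=\om_i\setminus\overline B_{r_1/(i+1)}(x_0)$, whereas you work on the full $\om_i$ with a bump $g_i$ at $x_0$. That difference is harmless. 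What breaks your argument is the \emph{additional} shell datum $f_i\in\test(\om_i\setminus\overline{\om}_{i-1})$ near $\partial\om_i$. First, your justification "once $\mathrm{supp}\,f_i$ has escaped $K$, so that $u_i$ solves the homogeneous equation near $\partial\om_i$" is false: $f_i$ is supported near $\partial\om_i$, far from $K$, so in $\om_i\setminus K$ one has $Q'_{A,p,V+1/i}[u_i]=f_i$ on a shell abutting $\partial\om_i$, which is \emph{not} the homogeneous equation. Second, and more seriously, this destroys the hypothesis of the WCP (Theorem~\ref{WCP}) on $\om_i\setminus K$: there we get $Q'_{A,p,V}[u_i]=f_i-\tfrac1i u_i^{p-1}$, and near $\partial\om_i$ (where $u_i\to 0$) this is $\approx f_i>0$, so $Q'_{A,p,V}[u_i]\le Q'_{A,p,V}[(1+\delta)v]$ fails whenever $v$ is close to an exact solution. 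The shell datum is also superfluous: $g_i\ge 0$, $g_i\not\equiv 0$ already forces $u_i>0$ in $\om_i$ via the strong maximum principle ($\alpha_2$ of Theorem~\ref{theorem:GL}). Deleting $f_i$ entirely repairs the argument and renders it equivalent to the paper's.

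A secondary point: the excursion into Theorem~\ref{theorem:removesing} to classify the behavior of the limit at $x_0$ is not needed for \emph{this} theorem, which only asks for some $u\in\mathcal M_{\Om;\{x_0\}}$; removability is sorted out later in Theorem~\ref{thrm:nonremove}. Moreover, your claim that a removable singularity forces the extension to be a \emph{global} minimal solution is unsupported by what you have established: the WCP step only gives minimal growth relative to compacts $K$ with $x_0\in\mathrm{int}\,K$, i.e.\ membership in $\mathcal M_{\Om;\{x_0\}}$, not in $\mathcal M_{\Om;\emptyset}$.
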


\begin{proof} We fix a point $x_0\in\Om$ and let $\{\om_i\}_{i\in\N}$ be a sequence of Lipschitz domains such that $x_0\in\om_1$, $\om_i\Subset\om_{i+1}\Subset \Om$, where $i\in\N$, and $\cup_{i\in\N}\om_i=\Om$. Setting $r_1:=\sup_{x\in\om_1}\dist(x;\partial\om_1)$ (the inradius of $\om_1$), we define the open sets
 \be\nonumber
  U_{i}
   :=
    \om_i\setminus\overline{B}_{r_1/(i+1)}(x_0).
 \ee
Pick a fixed reference point $x_1\in U_1$ and note that $U_i\Subset U_{i+1};~i\in\N$, and also $\cup_{i\in\N}U_i=\Om\setminus\{x_0\}$.  Let also $f_i\in\test\big(B_{r_1/i}(x_0)\setminus\overline{B}_{r_1/(i+1)}(x_0)\big)\setminus\{0\}$ be a sequence of nonnegative functions. The nonnegativity of $Q_{A,p,V}$ implies $\lambda_1(Q_{A,p,V+1/i};~U_i)>0$, and thus by Theorem \ref{theorem:GL} we obtain for each $i\in\N$, a positive solution $v_i$ of
\[
 \Bigg\{
 \begin{array}{ll}
   Q'_{A,p,V+1/i}[v]=f_i & \mbox{in }U_i,
    \\[8pt]
   v=0 & \mbox{on }\partial U_i.
 \end{array}
\]
Normalizing by $u_i(x):=v_i(x)/v_i(x_1)$, the Harnack convergence principle (Proposition \ref{Harnack:cp}) implies that $\{u_i\}_{i\in\N}$ admits a subsequence converging uniformly in compact subsets of $\Om\setminus\{x_0\}$ to a positive solution $u$ of \eqref{Q'=0}.

We claim that $u\in\mathcal{M}_{\Om;\{x_0\}}$. To this end, let $K$ be a compact smooth subset of $\Om$ such that $x_0\in{\rm int}K$, and let $v\in C(\Om\setminus{\rm int}K)$ be a positive supersolution of \eqref{Q'=0} in $\Om\setminus K$ with $u\leq v$ on $\partial K$. Let $\delta>0$. There exists then $i_K\in\N$ such that $\sprt\{f_i\}\Subset K$ for all $i\geq i_K$,  and in addition $u_i\leq(1+\delta)v$ on $\partial(\om_i\setminus K)$. The WCP (Theorem \ref{WCP}) implies $u_i\leq(1+\delta)v$ in $\om_i\setminus K$, and letting $i\to\infty$ we obtain $u\leq(1+\delta)v$ in $\Om\setminus K$. Since $\delta>0$ is arbitrary we conclude $u\leq v$ in $\Om\setminus K$.
\end{proof}

\begin{definition}
A function $u\in \mathcal{M}_{\Omega,\left\{x_{0}\right\}}$ having a nonremovable singularity at $x_{0}$ is called a {\em minimal positive Green function of
$Q'_{A,V}$ in $\Om$ with a pole at $x_0$}. We denote such a function by $G_{A,V}^\Om (x,x_0)$.
\end{definition}
The following theorem states that criticality is equivalent to the existence of a global minimal solution, that is $\mathcal{A}_1\Leftrightarrow\mathcal{A}_5$ in the Main Theorem presented in the introduction. It extends \cite[Theorem 9.6]{PR} and also \cite[Theorem 5.5]{PT1} and \cite[Theorem 5.8]{PT2}.

\begin{theorem}\label{global thm}
Suppose that $Q_{A,p,V}$ is nonnegative in $\Om$ with $A$ and $V$  satisfying hypothesis {\em(H0)} if $p\geq2,$ or {\em (H1)} if $1<p<2$. Then $Q_{A,p,V}$ is subcritical in $\Om$ if and only if (\ref{Q'=0}) does not admit a global minimal solution in $\Om$. In particular, $\phi$ is a ground state of (\ref{Q'=0}) in $\Om$ if and only if $\phi$ is a global minimal solution of (\ref{Q'=0}) in $\Om$.
\end{theorem}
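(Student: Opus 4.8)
The plan is to establish the two implications of the biconditional separately, and then deduce the "in particular" statement from them together with Theorem~\ref{theorem:main} and Theorem~\ref{crit}.

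First I would prove that if $Q_{A,p,V}$ is subcritical in $\Om$, then \eqref{Q'=0} admits no global minimal solution. The key point is the following monotonicity observation: if $u\in\mathcal{M}_{\Om;\emptyset}$ is a global minimal solution, then $u$ is a positive solution lying below every positive supersolution that dominates it on the boundary of any smooth compact exhaustion set; in particular (taking $K$ shrinking) $u$ must be below any positive solution of \eqref{Q'=0}. Since $Q_{A,p,V}$ is subcritical, by Theorem~\ref{theorem:main}(iii) there is a strictly positive $W\in C^0(\Om)$ with \eqref{ineq:subcriticality}, so $Q_{A,p,V-W}$ is still nonnegative in $\Om$, and by the AP theorem (Theorem~\ref{theorem:AP}) there is a positive solution $w$ of $Q'_{A,p,V-W}[u]=0$ in $\Om$. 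This $w$ is a \emph{strict} positive supersolution of \eqref{Q'=0} (it satisfies $Q'_{A,p,V}[w]=W|w|^{p-1}\gneq 0$). Normalizing so that $u(x_1)=w(x_1)$ at a reference point, and running the minimal-growth comparison on an exhaustion $\{\om_i\}$ (as in the previous theorem's proof, with a vanishing small compact $K_i\downarrow\{x_1\}$ or directly with $K=\emptyset$ using the definition of $\mathcal{M}_{\Om;\emptyset}$), one gets $u\le w$ in $\Om$. But then $u/w$ has an interior maximum (value $1$ at $x_1$), so by the strong maximum principle type argument — more precisely, using part (iii) of Lemma~\ref{lemma:lind} on exhausting Lipschitz subdomains together with the fact that $w$ is a strict supersolution — one forces $u\equiv cw$, contradicting that $w$ is not a solution. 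Hence no global minimal solution exists.

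Next I would prove the converse in contrapositive form: if $Q_{A,p,V}$ is critical in $\Om$, then \eqref{Q'=0} admits a global minimal solution, and in fact the ground state $\phi$ is one. By Theorem~\ref{theorem:main}(i) criticality gives a null sequence, and by Theorem~\ref{crit} the ground state $\phi$ is the \emph{unique} positive supersolution (regular if $p<2$) of \eqref{Q'=0} in $\Om$. Now let $K$ be any smooth compact set and let $v\in C(\Om\setminus\mathrm{int}K)$ be any positive supersolution of \eqref{Q'=0} in $\Om\setminus K$ with $\phi\le v$ on $\partial K$. I want to show $\phi\le v$ in $\Om\setminus K$. Fix $\delta>0$. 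On an exhaustion $\{\om_i\}$ with $K\Subset\om_1$, apply the WCP (Theorem~\ref{WCP}) on the Lipschitz domains $\om_i\setminus K$ with potential $V$ (here $\lambda_1(Q_{A,p,V};\om_i\setminus K)>0$ because $Q_{A,p,V}$ is nonnegative in $\Om$, hence by Proposition~\ref{proposition:superdomain} subcritical on the proper subdomain $\om_i\setminus K$, hence the principal eigenvalue there is positive by Corollary~\ref{cor_equiv}/Theorem~\ref{theorem:GL}): comparing $\phi$ (a solution, hence sub- and supersolution) with $(1+\delta)v$ (a supersolution, positive on $\partial(\om_i\setminus K)$, dominating $\phi$ there once $i$ is large since $\phi\to 0$ is false — rather one uses that $\phi\le (1+\delta)v$ on $\partial K$ and $\phi$ is bounded on $\partial\om_i$ while $v$ has a positive lower bound there, absorbing the discrepancy into $\delta$ via a further constant), we obtain $\phi\le(1+\delta)v$ in $\om_i\setminus K$; let $i\to\infty$ and then $\delta\to 0$. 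This gives $\phi\in\mathcal{M}_{\Om;K}$ for every such $K$, and in particular (taking $K$ arbitrarily small, or directly $K=\emptyset$) $\phi$ is a global minimal solution.

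For the "in particular" clause, combine the above: if $Q_{A,p,V}$ is critical then $\phi$ is a global minimal solution by the second part; conversely if $\phi'$ is any global minimal solution then by the first part $Q_{A,p,V}$ cannot be subcritical, so it is critical (it is nonnegative by hypothesis), and then $\phi'$ is a positive solution of \eqref{Q'=0}, hence equals the ground state up to a constant by Theorem~\ref{crit}; the normalization making it a global minimal solution (i.e.\ the comparison property) pins it down to be \emph{the} ground state $\phi$. I expect the main obstacle to be the careful handling of the boundary comparison on $\partial(\om_i\setminus K)$: one must justify that for $i$ large the inequality $\phi\le(1+\delta)v$ holds on the outer boundary $\partial\om_i$, which requires a uniform-in-$i$ bound on $\phi$ near $\partial\om_i$ relative to a lower bound on $v$ — this is exactly where one uses that $v$ is a \emph{global} supersolution on $\Om\setminus K$ together with Harnack-type control, mirroring the corresponding step in \cite[Theorem~9.6]{PR}.
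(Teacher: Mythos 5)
Your plan has genuine gaps in both directions, and in each case the repair the paper actually uses is different from what you sketch.

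For the direction \emph{subcritical $\Rightarrow$ no global minimal solution}: normalizing so that $u(x_1)=w(x_1)$ does not give you $u\le w$ on $\partial K$ for any $K$, so you cannot ``run the minimal-growth comparison'' from that normalization alone (and $K=\emptyset$ is not an admissible compact set in the definition of $\mathcal{M}_{\Om;\emptyset}$ — the definition quantifies over all smooth compact $K$, not over $K=\emptyset$). Even if you could arrange $u\le w$ with touching at an interior point $x_1$, Lemma~\ref{lemma:lind}(iii) is not applicable: it requires $w_1=w_2$ on $\partial\om$ for a Lipschitz subdomain $\om$, and here $u$ and $w$ are equal only at the interior point $x_1$ while disagreeing on $\partial\om$. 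The paper instead runs a sliding argument: using the minimal-growth property on a fixed compact $K$ and compactness inside $K$ one finds some $\varepsilon>0$ with $\varepsilon u\le v$ throughout $\Om$; then one lets $\varepsilon_0$ be the supremum of such $\varepsilon$, observes that $\varepsilon_0 u\not\equiv v$ because $v$ is a strict supersolution while $\varepsilon_0 u$ is a solution, hence $(1+\delta)\varepsilon_0 u\le v$ on a small closed ball, and propagates this to all of $\Om$ via the minimal-growth property to contradict maximality of $\varepsilon_0$. No touching-point lemma is needed.

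For the direction \emph{critical $\Rightarrow$ ground state is a global minimal solution}: you correctly identify the obstacle — you cannot verify $\phi\le(1+\delta)v$ on the outer boundary $\partial\om_i$ — but the remedy you anticipate (``Harnack-type control'' giving a uniform-in-$i$ bound on $\phi/v$ near $\partial\om_i$) is not available and is not what is done; there is no reason $\phi/v$ should stay bounded near $\partial\Om$. The paper avoids the outer boundary issue entirely by replacing $\phi$ with approximants: it solves $Q'_{A,p,V}[v]=f_j$ in $\om_i$ with $v=0$ on $\partial\om_i$ (with suitable $f_j$ supported near $x_0$), shows via criticality that the corresponding normalized sequence $u_{i,j}=v_{i,j}/v_{i,j}(x_1)$ converges locally uniformly to $\phi$, and applies the WCP to $u_{i,j}$ and $(1+\delta)v$ on $\om_i\setminus K$; since $u_{i,j}=0$ on $\partial\om_i$, the outer boundary inequality is automatic, and the inner boundary inequality on $\partial K$ holds for $i$ large by locally uniform convergence. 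Passing $i\to\infty$ and then $\delta\to 0$ yields $\phi\le v$ on $\Om\setminus K$. This approximation step is the substantive content your proposal is missing.
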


\begin{proof} To prove necessity, let $Q_{A,p,V}$ be subcritical in $\Om$. Clearly (By the AP theorem) there exists a continuous positive strict supersolution $v$ of (\ref{Q'=0}) in $\Om$. We proceed by contradiction. Suppose there exists a global minimal solution $u$ of (\ref{Q'=0}) in $\Om$ and fix $K$ to be a compact smooth subset of $\Om$. Let $\varepsilon_{\partial K}:=\min_{\partial K}v/\max_{\partial K}u$. Then $\varepsilon_{\partial K}u\leq v$, and $\varepsilon^{-1}_{\partial K}v$ is also a positive continuous supersolution of (\ref{Q'=0}) in $\Om$. Using it as a comparison function in the definition of $u\in\mathcal{M}_{\Om;\emptyset}$, we get $\varepsilon_{\partial K}u\leq v$ in $\Om\setminus K$. Letting also $\varepsilon_K:=\min_{K}v/\max_{K}u$, we readily have $\varepsilon_Ku\leq v$ in $K$. Consequently, by setting $\varepsilon:=\min\{\varepsilon_{\partial K},\varepsilon_K\}$ we have
 \be\nonumber
  \varepsilon u\leq v\hspace{1em}\mbox{in }\Om.
 \ee
Now we define
 \be\nonumber
  \varepsilon_0
   :=
    \max\{\varepsilon>0\mbox{ such that }\varepsilon u\leq v\mbox{ in }\Om\},
 \ee
and note that since $\varepsilon_0u$ and $v$ are respectively, a continuous solution and a continuous strict supersolution of (\ref{Q'=0}) in $\Om$, we have $\varepsilon_0u\not\equiv v$. There exist thus $x_1\in \Om$, and $\delta,r>0$ such that $B_r(x_1)\subset\Om$ and
 \be\nonumber
  (1+\delta)\varepsilon_0u(x)
   \leq
    v(x)
     \hspace{1em}\mbox{for all }x\in\overline{B}_r(x_1).
 \ee
But since $u\in\mathcal{M}_{\Om;\emptyset}$ it follows that
 \be\nonumber
  (1+\delta)\varepsilon_0u(x)
   \leq
    v(x)
    \hspace{1em}\mbox{for all }x\in\Om\setminus\overline{B}_r(x_1).
 \ee
Consequently, $(1+\delta)\varepsilon_0u(x)\leq v(x)$ in $\Om$, which contradicts the definition of $\varepsilon_0$. We note that in the proof of this part, we did not use the further regularity assumption (H1).

\medskip

\noindent To prove sufficiency, assume that $Q_{A,p,V}$ is critical in $\Om$ with ground state $\phi$ satisfying $\phi(x_1)=1$, for some $x_1\in\Om$. We will prove that $\phi\in\mathcal{M}_{\Om;\emptyset}$. To this end, consider an exhaustion $\{\om_i\}_{i\in\N}$ of $\Om$ such that $x_0\in\om_1$ and $x_1\in\Om\setminus\om_1$. Fix $j\in\N$, and let $f_j\in\test(B_{r_1/j}(x_0))\setminus\{0\}$ satisfy $0\leq f_j(x)\leq1$, where as in the previous proof we write $r_1$ for the inradius of $\om_1$. Let $v_{i,j}$ be a positive solution of
\[
 \left\{
 \begin{array}{ll}
   Q'_{A,p,V}[v]=f_j & \mbox{in }\om_i,
    \\[8pt]
   v=0 & \mbox{on }\partial\om_i.
 \end{array}
 \right.
\]
The WCP (Theorem \ref{WCP}) ensures that the sequence $\{v_{i,j}\}_{i\in\N}$ is nondecreasing. If $\{v_{i,j}(x_1)\}$ is bounded, then the sequence converges to $v_j$, where $v_j$ is such that $Q'_{A,p,V}[v_j]=f_j$ in $\Om$. Thus $v_j$ would be a strict supersolution of (\ref{Q'=0}), which contradicts Theorem \ref{theorem:main}, since the ground state $\phi$ is the only positive supersolution of $Q'_{A,p,V}[w]=0$ in $\Om$. Therefore, $v_{i,j}(x_1)\to\infty$ as $i\to\infty$. Defining thus the normalized sequence $u_{i,j}(x):=\frac{v_{i,j}(x)}{v_{i,j}(x_1)}$, by the Harnack convergence principle (Proposition \ref{Harnack:cp}) we may extract a subsequence of $\{u_{i,j}\}$ that converges as $i\to\infty$ to a positive solution $u_j$ of the equation (\ref{Q'=0}) in $\Om$. Once again by the uniqueness of the ground state, we have $u_j=\phi$.

Now let $K$ be a smooth compact set of $\Om$ and assume that $x_0\in{\rm int}(K)$. Let $v\in C(\Om\setminus{\rm int}K)$ be a positive supersolution of (\ref{Q'=0}) in $\Om\setminus K$ such that $\phi\leq v$ on $\partial K$. Let $j\in\N$ be large enough, so that $\sprt\{f_j\}\Subset K$. For any $\delta>0$ there exists $i_\delta\in\N$ such that for $i\geq i_\delta$ to have
\[
\begin{cases}
0=Q'_{A,p,V}[u_{i,j}]\leq Q'_{A,p,V}[v] &\mbox{in }\om_i\setminus K,\\
Q'_{A,p,V}[v]\geq 0 &\mbox{in }\om_i\setminus K,\\
0\leq u_{i,j}\leq (1+\delta)v &\mbox{on }\partial(\om_i\setminus K),\\
\end{cases}
\]
which implies that $\phi=u_j\leq(1+\delta)v$ in $\Om\setminus K$. Letting $\delta\to0$ we obtain $\phi\leq v$ in $\Om\setminus K$.
\end{proof}
To conclude the paper, it remains to establish the equivalence between $\mathcal{A}_1$ and $\mathcal{A}_6$ of the Main Theorem.
\begin{theorem} \label{thrm:nonremove}
Suppose that $Q_{A,p,V}$ is nonnegative in $\Om$ with $A$ and $V$  satisfying hypothesis {\em(H0)} if $p\geq2,$ or {\em (H1)} if $1<p<2$. Let $u\in\mathcal{M}_{\Om,\{x_0\}}$ for some $x_0\in\Om$.

a) If $u$ has a removable singularity at $x_0$, then $Q_{A,p,V}$ is critical in $\Om$.

b) Let $1<p\leq n$, and suppose that $u$ has a nonremovable singularity at $x_0$, then $Q_{A,p,V}$ is subcritical in $\Om$.

c) Let $p>n$, and suppose that $u$ has a nonremovable singularity at $x_0$. Assume further that $\lim_{x\to x_0}u(x)=c$, where $c$ is a positive constant.
Then $Q_{A,p,V}$ is subcritical in $\Om$.
\end{theorem}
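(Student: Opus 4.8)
I would prove all three statements by contradiction, in each case pitting the minimal‑growth property of $u$ against a competing positive (super)solution. Two elementary remarks will be used repeatedly: the class $\mathcal{M}_{\Om;\{x_0\}}$ is invariant under multiplication by positive constants (by the homogeneity of $Q'_{A,p,V}$), and $\mathcal{M}_{\Om;\emptyset}\subseteq\mathcal{M}_{\Om;\{x_0\}}$. When $Q_{A,p,V}$ is critical I will use that its ground state $\phi$ is, up to a positive constant, the unique positive supersolution of \eqref{Q'=0} in $\Om$ and a global minimal solution, hence $\phi\in\mathcal{M}_{\Om;\emptyset}$ (Theorems~\ref{crit}, \ref{theorem:main}, \ref{global thm}); when $Q_{A,p,V}$ is subcritical I will use the AP theorem (Theorem~\ref{theorem:AP}) together with the strictly positive continuous weight produced in Theorem~\ref{theorem:main}(iii) to obtain a positive, continuous, strict supersolution $w$ of \eqref{Q'=0} in $\Om$.

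\textbf{Part a).} Let $\tilde u$ be the positive solution of \eqref{Q'=0} in $\Om$ extending $u$, and suppose for contradiction that $Q_{A,p,V}$ is subcritical; let $w$ be as above. Fix a small ball $B_\rho(x_0)\Subset\Om$. Comparing $\tilde u$ with a sufficiently large multiple of $w$ on the compact sphere $\partial B_\rho(x_0)$ and invoking $\tilde u\in\mathcal{M}_{\Om;\{x_0\}}$ with $K=\overline{B_\rho(x_0)}$, and then using continuity on $\overline{B_\rho(x_0)}$, yields $\tilde u\le C w$ in $\Om$; hence $\varepsilon_0:=\inf_\Om(w/\tilde u)\in(0,\infty)$ and $\varepsilon_0\tilde u\le w$ in $\Om$. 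Since $w$ is a strict supersolution and $\varepsilon_0\tilde u$ a solution, $\varepsilon_0\tilde u\not\equiv w$, and an interior strong comparison principle for sub/supersolutions — available on every ball $B\Subset\Om$ because subcriticality forces $\lambda_1(Q_{A,p,V};B)>0$ (cf.\ Theorem~\ref{theorem:GL}) — gives $\varepsilon_0\tilde u<w$ throughout $\Om$. In particular $\min_{\overline{B_\rho(x_0)}}(w-\varepsilon_0\tilde u)>0$, so $(1+\delta)\varepsilon_0\tilde u\le w$ on $\overline{B_\rho(x_0)}$, hence on $\partial B_\rho(x_0)$, for some $\delta>0$; applying $(1+\delta)\varepsilon_0\tilde u\in\mathcal{M}_{\Om;\{x_0\}}$ with the supersolution $w$ on $\Om\setminus\overline{B_\rho(x_0)}$ propagates the inequality to all of $\Om$, contradicting the maximality of $\varepsilon_0$. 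Thus $Q_{A,p,V}$ is critical.

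\textbf{Parts b) and c).} In both cases suppose for contradiction that $Q_{A,p,V}$ is critical, with ground state $\phi\in\mathcal{M}_{\Om;\emptyset}$, and fix a reference point $x_*\in\Om\setminus\{x_0\}$. For b), since $p\le n$ and $u$ has a nonremovable singularity, $u$ is unbounded near $x_0$ (Theorem~\ref{theorem:removesing}(1)), hence $u(x)\to\infty$ as $x\to x_0$ (Theorem~\ref{theorem:removesing}(2)); for every $t>0$, continuity of $\phi$ near $x_0$ and the blow‑up of $u$ furnish a radius $\rho_t$ with $x_*\notin\overline{B_{\rho_t}(x_0)}$ and $t\phi\le u$ on $\partial B_{\rho_t}(x_0)$, and applying $t\phi\in\mathcal{M}_{\Om;\{x_0\}}$ against the positive supersolution $u$ of \eqref{Q'=0} on $\Om\setminus\overline{B_{\rho_t}(x_0)}$ yields $t\phi(x_*)\le u(x_*)<\infty$ for all $t>0$, which is absurd. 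For c), normalize $\phi$ so that $\phi(x_0)=c$; since $u(x)\to c$ and $\phi(x)\to c$ as $x\to x_0$, for each $\e\in(0,1)$ there is a radius $\rho$ with $x_*\notin\overline{B_\rho(x_0)}$ and, on $\partial B_\rho(x_0)$, both $(1-\e)\phi\le u$ and $(1-\e)u\le\phi$; applying the minimal‑growth property once to $(1-\e)\phi\in\mathcal{M}_{\Om;\{x_0\}}$ against $u$ and once to $(1-\e)u\in\mathcal{M}_{\Om;\{x_0\}}$ against $\phi$ gives $(1-\e)\phi(x_*)\le u(x_*)$ and $(1-\e)u(x_*)\le\phi(x_*)$; letting $\e\to0$ forces $u=\phi$ on $\Om\setminus\{x_0\}$, so $u$ extends to a positive solution of \eqref{Q'=0} in $\Om$, i.e.\ $x_0$ is removable, contradicting the hypothesis. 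Hence in both cases $Q_{A,p,V}$ is subcritical.

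\textbf{Main obstacle.} The delicate point is Part a): in contrast with the analogous step in the proof of Theorem~\ref{global thm}, the ``improvement ball'' here must be centered at $x_0$ (since $u$ has minimal growth only relative to compacta containing $x_0$), which forces the use of an interior strong maximum/comparison principle to upgrade $\varepsilon_0\tilde u\le w$ to a strict inequality on a neighborhood of $x_0$; one must check that the local version (available since $\lambda_1>0$ on small balls under subcriticality) suffices, and this is the step I expect to require the most care. A lesser nuisance in parts b) and c) is the bookkeeping of the radii $\rho_t$, $\rho(\e)$ so that the fixed comparison point $x_*$ stays outside the shrinking balls while the boundary inequalities on $\partial B_\rho(x_0)$ persist.
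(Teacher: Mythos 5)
Parts b) and c) of your proposal are correct and essentially reproduce the paper's own arguments: in b) one uses Theorem~\ref{theorem:removesing} to get $u\to\infty$ at $x_0$ and then beats the global minimal solution by arbitrarily large multiples; in c) one matches boundary values on shrinking spheres and squeezes. Your use of a fixed reference point $x_*$ is only cosmetic (the radii you pick are anyway independent of $x_*$, so letting $x_*$ vary gives equality on all of $\Om\setminus\{x_0\}$).

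Part a), however, contains a genuine gap at precisely the spot you flag as delicate. You assert that "an interior strong comparison principle for sub/supersolutions [is] available on every ball $B\Subset\Om$ because subcriticality forces $\lambda_1(Q_{A,p,V};B)>0$ (cf.\ Theorem~\ref{theorem:GL})". What $\lambda_1>0$ gives via Theorem~\ref{theorem:GL} is the strong \emph{maximum} principle: a solution of \eqref{Q'=f:om} which is $\ge0$ on $\partial\om$ is either $\equiv0$ or $>0$ in $\om$. That is a statement about one function, not a comparison between two. For $p=2$ the two are equivalent by linearity (apply the strong maximum principle to $w-\e_0\tilde u$), but for $p\ne2$ the strong comparison principle ("$u_1\le u_2$ with $u_1$ a solution and $u_2$ a supersolution implies $u_1<u_2$ or $u_1\equiv u_2$") is a considerably stronger assertion, it is \emph{not} implied by the positivity of $\lambda_1$, and it is not proved (nor needed) anywhere in the paper. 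Consequently the step "$\e_0\tilde u\le w$ and $\e_0\tilde u\not\equiv w$ gives $\e_0\tilde u<w$ throughout $\Om$" is unjustified, and with it the strict inequality on $\overline{B_\rho(x_0)}$ that feeds your improvement argument.

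The paper's proof of a) sidesteps all of this: once the singularity is removed, the continuous extension $\tilde u$ is a \emph{global} minimal solution of \eqref{Q'=0} in $\Om$, and Theorem~\ref{global thm} (criticality $\Leftrightarrow$ existence of a global minimal solution) immediately gives criticality. The improvement argument you are trying to adapt already lives inside the proof of Theorem~\ref{global thm}, where the "improvement ball" is placed wherever $\e_0 u\ne v$ — possible there exactly because a global minimal solution has minimal growth relative to \emph{arbitrary} compacta, not only those containing $x_0$. Losing that freedom is what forces your reach for a strong comparison principle, which is unavailable here.
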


\begin{proof} $a$) If $u$ has a removable singularity at $x_0$, its continuous extension is a global minimal solution in $\Om$, and Theorem \ref{global thm} assures that $Q_{A,p,V}$ is critical in $\Om$.

$b$) Assume that $u$ has a nonremovable singularity at $x_0$, and suppose for the sake of contradiction that $Q_{A,p,V}$ is critical in $\Om$. Theorem \ref{global thm} implies the existence of a global minimal solution $v$ of \eqref{Q'=0} in $\Om$. By Theorem \ref{theorem:removesing} we have $\lim_{x\to x_0}u(x)=\infty$, and thus by comparison $v\leq\varepsilon u$ in $\Om$, where $\e$ is an arbitrary positive constant. This implies that $v=0$, a contradiction.

$c$) Suppose that $Q_{A,p,V}$ is critical in $\Om$, and let $v>0$ be the corresponding global minimal solution. We may assume that $v(x_0)=c$. Since both $u$ and $v$ are continuous at $x_0$, it follows that for any $\varepsilon>0$ there exists $\delta_\varepsilon>0$ such that for all $0<\delta<\delta_\varepsilon$
$$(1-\varepsilon)u(x)\leq v(x) \leq (1+\varepsilon)u(x)\qquad \forall x\in \partial B_{\delta}(x_0).$$
Since $u$ and $v$ are positive solutions (in $\Om\setminus \{x_0\}$ and $\Om$, respectively)  of minimal growth at infinity in $\Om$, the above inequality implies that
$$(1-\varepsilon)u(x)\leq v(x) \leq (1+\varepsilon)u(x)\qquad \forall x\in \Om\setminus \{x_0\}.$$
Letting $\e\to 0$, we get $u=v$ in $\Omega$, which contradicts our assumption that $u$ has a nonremovable singularity at $x_0$.
\end{proof}
\begin{remark}
For sufficient conditions ensuring that in the subcritical case with $p>n$, the limit of the Green function $G_{A,V}^\Om (x,x_0)$ as $x\to x_0$  always exists and is positive, see \cite{FP2}.
\end{remark}
\medskip
\begin{center}
{\bf Acknowledgments}
\end{center}
We would like to thank Professor Jan Mal\'{y} for explaining that (\ref{ineq:uncertainty}) holds in extension domains. We also thank Mr. Amir Kassis for providing us with a simpler proof of \cite[Proposition 4.3]{PT1} which in turn we have followed to prove Corollary~\ref{corollary:convex:combination}. The authors acknowledge the support of the Israel Science Foundation (grants No. 970/15) founded by the Israel Academy of Sciences and Humanities. G.P. is supported in part at the Technion by a Fine Scholarship.

{\sc Yehuda Pinchover}

Technion - Israel Institute of Technology

Department of Mathematics

Haifa 32000, Israel

E-mail: \verb"pincho@techunix.technion.ac.il"

\medskip

\&

\medskip

{\sc Georgios Psaradakis}

Technion - Israel Institute of Technology

Department of Mathematics

Haifa 32000, Israel

E-mail: \verb"georgios@techunix.technion.ac.il"
\end{document}